\DeclareFontFamily{U}{mathx}{}
\DeclareFontShape{U}{mathx}{m}{n}{<-> mathx10}{}
\DeclareSymbolFont{mathx}{U}{mathx}{m}{n}
\DeclareMathAccent{\widehat}{0}{mathx}{"70}
\DeclareMathAccent{\widecheck}{0}{mathx}{"71}
\newdimen\argwidth
\newcommand{\Jump}[1]{%
\setbox0=\hbox{$#1$}\argwidth=\wd0
\setbox0=\hbox{$\left[\box0\right]$}\advance\argwidth by -\wd0
\left[\kern.3\argwidth\box0\kern.3\argwidth\right]}
\newcommand{\bb}{\ensuremath{\mathbf{b}}}
\newcommand{\bRHS}{\ensuremath{\mathbf{RHS}}}
\newcommand{\bn}{\ensuremath{\mathbf{n}}}
\newcommand{\barG}{\ensuremath{\mathbf{\overline{G}}}}
\newcommand{\bu}{\ensuremath{\mathbf{u}}}
\newcommand{\be}{\ensuremath{\mathbf{e}}}
\newcommand{\bv}{\ensuremath{\mathbf{v}}}
\newcommand{\bx}{\ensuremath{\mathbf{x}}}
\newcommand{\bg}{\ensuremath{\mathbf{g}}}
\newcommand{\bG}{\ensuremath{\mathbf{G}}}
\newcommand{\bGtilde}{\ensuremath{\mathbf{\widetilde{G}}}}
\newcommand{\gtilde}{\ensuremath{\tilde{g}}}
\newcommand{\cS}{\ensuremath{\mathcal{S}}}
\newcommand{\cC}{\ensuremath{\mathcal{C}}}
\newcommand{\rbar}{\ensuremath{\overline{r}}}
\newcommand{\xbar}{\ensuremath{\overline{x}}}
\newcommand{\ybar}{\ensuremath{\overline{y}}}
\newcommand{\bPsi}{{\ensuremath{\boldsymbol{\Psi}}}}
\newcommand{\R}{\ensuremath{\mathbb{R}}}
\newcommand{\T}{\ensuremath{\mathbb{T}}}
\newcommand{\PP}{\ensuremath{\mathbb{P}}}
\newcommand{\QQ}{\ensuremath{\mathbb{Q}}}
\newcommand{\bbC}{\ensuremath{\mathbb{C}}}
\newcommand{\bbA}{\ensuremath{\mathbb{A}}}
\newcommand{\bdPP}{\ensuremath{\pmb{\mathrm{d}\mathbb{P}}}}
\newcommand{\bdQQ}{\ensuremath{\pmb{\mathrm{d}\mathbb{Q}}}}
\newcommand{\RT}{\ensuremath{\pmb{\mathbb{RT}}}}
\newcommand{\bN}{\ensuremath{\pmb{\mathbb{N}}}}
\newcommand{\Bdiv}{\ensuremath{\pmb{\mathrm{d}\mathbb{B}}^{\mathrm{div}}}}
\newcommand{\Bcurl}{\ensuremath{\pmb{\mathrm{d}\mathbb{B}}^{\mathrm{curl}}}}
\newcommand{\bbB}{\ensuremath{\pmb{\mathbb{B}}}}
\newcommand{\fraku}{\ensuremath{\mathfrak{u}}}
\DeclareMathOperator{\Span}{Span}
\DeclareMathOperator{\Range}{Range}
\newcommand{\bnabla}{\ensuremath{\boldsymbol{\nabla}}}
\newcommand{\dnablaperp}{\ensuremath{\nabla^{\perp} _{\mathscr{D}'}}}
\newcommand{\dnabla}{\ensuremath{\nabla_{\mathscr{D}'}}}
\newcommand{\nablaperp}{\ensuremath{{\nabla^{\perp}}}}
\newcommand{\nablaperpStar}{\ensuremath{{\left(\nabla^{\perp}\right)^{\star}}}}
\newcommand{\nablaStar}{\ensuremath{{\left(\nabla\right)^{\star}}}}
\newcommand{\ex}[1]{\ensuremath{\mathrm{e}^{\,#1}}}
\newcounter{propc}
\newtheorem{prop}[propc]{Proposition}
\newcounter{defc}
\newtheorem{definition}[defc]{Definition}
\theoremstyle{definition}
\newcounter{remc}
\newtheorem{remark}[remc]{Remark}
\newcommand{\Sum}[2]{\ensuremath{\textstyle{\sum\limits_{#1}^{#2}}}}
\newcommand{\Int}[2]{\ensuremath{\mathchoice%
{{\displaystyle\int_{#1}^{#2}}}
{{\displaystyle\int_{#1}^{#2}}}
{\int_{#1}^{#2}}
{\int_{#1}^{#2}}
}}
\newcommand{\dd}{\ensuremath{{\rm d}}}
\newcommand{\norme}[1]{\ensuremath{\Arrowvert #1 \Arrowvert}}
\newcommand{\scalar}[2]{\ensuremath{\left\langle #1 | #2 \right\rangle}}
\newcommand{\diag}[1]{\stackrel{\displaystyle{#1}}{\xrightarrow{\qquad\qquad}}}
\author{Vincent Perrier \\
    Team Cagire, INRIA Bordeaux Sud-Ouest. \\
    Laboratoire de Math\'ematiques et de leurs applications \\
    B\^atiment IPRA, 
    Universit\'e de Pau et des Pays de l'Adour, \\ 
    Avenue de l'Universit\'e, 
    64\,013 Pau Cedex}
\title{Development of discontinuous Galerkin methods  for hyperbolic systems
  that preserve a curl or a divergence constraint: the case of linear systems}
\begin{document}

\maketitle

\begin{abstract}
  Some hyperbolic systems are known to include implicit preservation
  of differential constraints: these are for example the time conservation
  of the curl or the divergence of a vector that appear as an implicit
  constraint.
  In this article, we show that this kind of constraint can be easily
  conserved at the discrete level with the classical discontinuous Galerkin
  method, provided the right approximation space is used for the
  vectorial space, and under some mild assumption on the numerical flux.
  Even if the approximation space that is used does not always
    match with the one used classicaly for the discontinuous Galerkin method,
    the approximation space is \emph{fully discontinuous}, i.e. the method
  \emph{is not} a staggered method.
  We start by recalling a discrete de-Rham framework in which discontinuous
  approximation spaces for vectors fits. 
  The discrete adjoint divergence and curl, which are \emph{global}
  divergence and curl,
  are proven to be exactly preserved by the discontinuous Galerkin
  method under a
  small assumption on the numerical flux. 
  Numerical tests are performed in dimension 2 on the wave
  system, the  Maxwell system and the induction
  equation, on Cartesian meshes, and unstructured triangular and quadrangular meshes
  and confirm that the differential constraints  are  preserved at
  machine precision while keeping the high order of accuracy.
\end{abstract}

\tableofcontents

\section{Introduction}

In this article, we are interested in the discrete conservation of
differential constraints that appear implicitly in a hyperbolic system
of conservation law. Suppose for example that a hyperbolic system
includes a vectorial unknown $\bu$ for which the conservation law is
$$\partial_t \bu + \bnabla \cdot \barG = 0,$$
where $\barG$ is a matrix. If $\barG$ is proportional to the identity matrix,
namely if a scalar function $g$ exists such that $\barG = g \mathrm{I}_d$,
then the conservation law becomes
$$\partial_t \bu + \nabla g = 0,$$
and taking the curl of the equation on $\bu$ gives formally the
conservation of the curl of $\bu$:
$$\partial_t \left( \nabla \times \bu \right) =
\nabla \times \left( \partial_t \bu \right) =
- \nabla \times \left( \nabla g \right) = 0.$$
In the same manner, if $\barG$ is antisymmetric, then the divergence of $\bu$ is
constant.

This kind of implicit differential constraint appears in a large number
of systems including incompressible Navier-Stokes system,
Maxwell system, magnetohydrodynamics (MHD), and wave and elastodynamic
problems written in first order formulation. 
Preservation of such constraints at the discrete level has been addressed
by several strategies, which can be gathered as
\begin{itemize}
\item {\bf Staggering of unknowns.}
  Staggering unknowns consists in non collocated distribution of unknowns,
  for example in defining scalar unknowns in each cells, and
  vector unknowns in the sides of the mesh. 
  The staggering of unknowns was proposed for the incompressible
  Navier-Stokes system  in the MAC scheme \cite{lebedev1964difference},
  and for the Yee scheme \cite{yee1966numerical} for Maxwell equations, see
  also \cite{balsara2001divergence,balsara2004second,balsara1999staggered}
  for the MHD system.
  Staggering of unknowns has been the object of a huge body of papers,
  including see e.g. \cite{nicolaides1992analysis,nicolaides1996analysis,
    eymard2010convergence,gallouet2009convergent,eymard2010convergent} for
  the analysis of the MAC scheme. 
  Among the large family of staggered schemes enters also the 
  work on compatible discretization \cite{hyman1997natural,lipnikov2014mimetic},
  or the staggered discontinuous Galerkin method \cite{tavelli2017pressure}.
  %or the staggered divergence cleaning of \cite{boscheri2023locally}.
  This includes also the work on
  discrete exterior calculus with finite
  elements, including theoretical work around the Hodge Laplacian and its
  well posed mixed formulation
  \cite{arnold2006finite,arnold2010finite,arnold2014periodic},
  see also the book \cite{arnold2018finite}, work on
  electromagnetism
  \cite{bossavit1988whitney,bossavit1998computational,bossavit1998geometry,hiptmair2001discrete,hiptmair2002finite}
  and discretization of the
  Lie-advection equation
  \cite{heumann2010eulerian,heumann2013stabilized,heumann2012fully}
  and application
  \cite{pagliantini2016computational}.
  Staggered discretizations were extended also to polytopal
  meshes, mainly for incompressible systems see e.g.
  the virtual element method \cite{beirao2013basic},
  the Hybrid High order (HHO) methods \cite{di2020hybrid}
  or the Compatible Discrete Operators (CDO) method
  \cite{bonelle2014compatible,bonelle2015analysis,milani2022artificial}.
  The main difficulty of staggered discretizations for nonlinear
  hyperbolic systems is to keep a correct definition of the conservation;
  especially, keeping conservation while shock-limiting staggered data is
  challenging. 
\item {\bf Projection method.}
  The projection method was mainly used for divergence cleaning
  in \cite{brackbill1980effect} for the MHD system and in 
  \cite{bell1989second} for incompressible Navier-Stokes system,
  see also \cite[the globally divergence free method]{cockburn2004locally}. The
  projection method is a predictor-corrector method. Suppose for
  example that the divergence of a vector $\bv$ should be preserved.
  Suppose also that a predictor step provided a candidate update for
  $\bv^{n+1}$ of $\bv^n$, but that the divergence is not preserved. Then
  a potential $\varphi$ such that
  $$\Delta \varphi = \nabla \cdot \left( \bv^{n+1} - \bv^n \right),$$
  may be computed. Then $\tilde{\bv}^{n+1} := \bv^{n+1} - \nabla \varphi$
  is a projection of $\bv^{n+1}$ for which the divergence is preserved.
  The main drawback of this method is the cost of the
  inversion of an elliptic system at each time step, and the definition
  of the boundary conditions for this elliptic system. 
\item {\bf Generalized Lagrange Multiplier method.} This method,
  first developed in \cite{munz2000divergence,dedner2002hyperbolic}
  consists in considering the divergence to be preserved as an additional
  variable. Then an additional equation for this variable, and a relaxation
  process ensure that the divergence is asymptotically preserved. This
  method was extended to curl preservation in \cite{dumbser2020glm}.
  The difficulty with this system is its higher computational
  cost because of the additional variables, and the tuning of the
  numerical relaxation parameters. 
\end{itemize}

Another category of method was especially designed for the conservation
of the zero divergence of the magnetic field in the Maxwell system,
the MHD system or the
induction model system; these systems are slightly
different from the ones previously discussed,
because the divergence of the magnetic field is not directly preserved
by the system. Instead, the divergence is solution of a
conservative transport equation, so that an initially divergence free
magnetic field is divergence free for all time. These two categories
of methods are 
\begin{itemize}
\item {\bf the Godunov-Powell method.} This method consists in
  relying on the formulation of Godunov 
  \cite{godunov1972symmetric,godunov1961interesting} and
  Powell \cite{powell1994approximate,powell1999solution} for the MHD
  equation, and in trying to control the
  divergence of the magnetic field \cite{fuchs2009stable,mishra2010stability}
  (note however that the magnetic field is not divergence free, the aim
  of the method is only to keep this divergence "low").
\item {\bf Constrained Transport Method.} This method was originally
  proposed in \cite{evans1988simulation} as an alternative of the
  Yee scheme \cite{yee1966numerical}, and under a staggered fashion
  see also \cite{li2012arbitrary} for an application to the
  high order discretization of the MHD system.
  This method is based on staggered ideas, and in the Maxwell context,
  the challenge consists in the computation of a reliable staggered electric
  field based on a magnetic field that, still \emph{may not be staggered}: 
  indeed, in \cite{toth2000b}, several methods were compared, including
  different versions of the contrained transport method, and it was shown that
  the method can be collocated, see also 
  \cite{torrilhon2004constraint,torrilhon2005locally,helzel2011unstaggered}.
  Concerning the contrained transport method, we also refer to 
  the review \cite{teyssier2019numerical} and references therein,  
  \cite{veiga2021arbitrary} for a high order constrained transport method
  based on spectral differences, and \cite{jeltsch2006curl} for an extension
  to the preservation of a curl.
\end{itemize}

Apart from these schemes that are especially designed for preserving exactly
a discrete curl or a discrete divergence, some collocated numerical schemes
seem to
be \emph{naturally} able to preserve these constraints. This is for
example the case of numerical schemes developed within the
low Mach number community \cite{Dellacherie2010_cell_geometry,guillard2009behavior,guillard2017behaviour,Jung2024}, which inspired the present article, and
which was extended for low order quadrangular meshes in \cite{Jung2024b}.
Even if these references do no state any relation with curl preservation,
they are actually preserving a curl that will be defined in this article.
Another family of schemes that seem to preserve exactly divergence or curl
constraints are node based numerical schemes, see e.g. 
\cite{boscheri2023unconventional} for the divergence preservation or
\cite{barsukow2019stationarity,barsukow2021truly} for curl
preservation. Considering their stencils, these schemes seem to have a
close relationship with \cite{torrilhon2005locally,jeltsch2006curl}, which
are based on the unstaggered constrained transport method.

In this article, we wish to develop two-dimensional
discontinuous Galerkin methods
that \emph{naturally} preserve a global divergence or a curl \emph{exactly}, which
match with \cite{Dellacherie2010_cell_geometry,guillard2009behavior,guillard2017behaviour,Jung2024} for straight triangular meshes but which differ on quadrangular meshes because we use alternative approximation space for vectors,
proposed in \cite{PerrierExact2024}. 

This article is organized as follows. In \autoref{sec:ApproximationSpaces}, the
approximation spaces developed in \cite{PerrierExact2024} are recalled,
and their relation with the de-Rham complex is recalled. 
Based on these approximation spaces and matching discrete operators, 
we prove in \autoref{sec:DiscreteCurlDiv}
that the classical discontinuous Galerkin method is able to preserve a
curl or a divergence constraint provided the right approximation space
is used, and under a small hypothesis regarding the diffusion direction of
the numerical flux. Then in \autoref{sec:LieDerivative}, we explain how
the numerical scheme can be also extended to the induction equation; we
especially prove that if the vector field is correctly initialized, then
it is divergence free (still in the adjoint sense) for all time.
In \autoref{sec:numericalResults}, the numerical scheme is tested for
the preservation of a divergence with the two dimensional Maxwell system,
the preservation of a curl with the two dimensional first order formulation
of the wave system, and the preservation of the divergence free field
with the induction equation. In this numerical section, convergence tests
are also performed. This article finishes with the conclusion in
\autoref{sec:conclusion}.

\section{Approximation spaces}
\label{sec:ApproximationSpaces}

\subsection{Continous de-Rham complex}

The most general context for introducing the gradient, curl and
divergence operator relies on exterior calculus. In dimension $2$, this
consists in considering the de-Rham complex:
\begin{equation}
  \label{eq:deRhamDifferentialForms}
  \Lambda^0 ( \Omega )
  \diag{\dd^0}
  \Lambda^1 (  \Omega )
  \diag{\dd^1}
  \Lambda^2 (  \Omega ), 
\end{equation}
where $\Lambda^k ( \Omega)$ is the set of smooth $k$-differential forms on
$\Omega$, and $\dd$ is the exterior derivative (see
\cite[Chap. 1.4]{Cartan1967}). When dealing with partial differential equations,
it is usually more convenient to describe \eqref{eq:deRhamDifferentialForms}
in terms of \emph{proxies}. In dimension $2$, a 
differential form of $\Lambda^1$ is usually represented in two manners:
\begin{itemize}
\item Either by the scalar product by a vector field $\bu$:
  \begin{equation}
    \label{eq:Proxy}
    \forall \fraku \in \Lambda^1 \quad \exists! \bu \in \R^2 \quad 
    \forall \bv_1 \in \R^2
    \qquad \fraku ( \bv_1) = \bu \cdot \bv_1,
  \end{equation}
\item or by the determinant with a vector field $\tilde{\bu}$:
  \begin{equation}
    \label{eq:ProxyOrtho}
    \forall \fraku \in \Lambda^1 \quad \exists! \tilde{\bu} \in \R^2 \quad 
    \forall \bv_1 \in \R^2
    \qquad \fraku ( \bv_1) = \det( \bv_1, \tilde{\bu}).
  \end{equation}
  Note that $\bu \cdot \bv_1 = \det( \bv_1, \tilde{\bu})$ if and only 
  if $\tilde{\bu}$ is the image of $\bu$ by the
  $\pi/2$ rotation which we denote by a $\perp$ exponent:
  $\tilde{\bu} = \bu^{\perp}$.
\end{itemize}
From now on, we denote by $\nabla$ the vector operator
  $\left(\partial_x, \partial_y \right)$ and
  $\nablaperp$ the vector operator $\left(-\partial_y, \partial_x \right)$.
Then, depending on the choice of proxy,
\eqref{eq:deRhamDifferentialForms} becomes,
if \eqref{eq:Proxy} is chosen: 
\begin{equation}
  \label{eq:deRhamGradCurl}
  C^{\infty} \left( \Omega \right)
  \diag{\nabla}
  \left( C^{\infty} \left( \Omega \right) \right) ^2
  \diag{\nablaperp \cdot}
  C^{\infty} \left( \Omega \right),
\end{equation}
whereas if \eqref{eq:ProxyOrtho} is chosen,
\eqref{eq:deRhamDifferentialForms} becomes:
\begin{equation}
  \label{eq:deRhamCurlDiv}
  C^{\infty} \left( \Omega \right)
  \diag{\nablaperp}
  \left( C^{\infty} \left( \Omega \right) \right) ^2
  \diag{- \nabla \cdot}
  C^{\infty} \left( \Omega \right).
\end{equation}
A fundamental result on the de-Rham complex, 
is the link between the dimension of the cohomology
groups of \eqref{eq:deRhamDifferentialForms}
and the Betti numbers \cite{allen2001algebraic},
which are characteristic of the topology of the domain:
\begin{equation}
  \label{eq:HarmonicGap}
  \left\{
  \begin{array}{r@{\, = \, }l}
    b_0 & \dim \left( \ker \nabla \right), \\
    b_1 & \dim \left( \ker \left( \nablaperp \cdot \right)
    / \Range \left( \nabla \right)\right), \\
    b_2 & \dim \left( C^{\infty} ( \Omega)
    / \Range \left( \nablaperp \cdot \right)\right). \\
  \end{array}
  \right.
\end{equation}
In this article, we focus on the two-dimensional torus $\T^2$,
  for which the Betti numbers are $b_0 = b_2 = 1$ and $b_1 = 2$.

\subsection{Conforming discrete de-Rham complex}

From an approximation point of view, the study of discrete
counterpart of the complex of \eqref{eq:deRhamDifferentialForms} has been
an intensive research topic over the last forty years, including the
work of Whitney~\cite{Whitney1957}, Bossavit and Hiptmair on electromagnetism
\cite{bossavit1988whitney,bossavit1998computational,bossavit1998geometry,
  hiptmair2002finite}, and
the work on formalization of Arnold and collaborators of finite element
exterior calculus
\cite{arnold2006finite,arnold2010finite} which led to the
reference book~\cite{arnold2018finite}. A classical conformal
discrete counterpart of the de-Rham diagram of
\eqref{eq:deRhamDifferentialForms} on triangles 
relies on the family of ${\cal P}^{-} _r \Lambda^k$
of \cite[Chapter 7]{arnold2018finite}, for which a key property
is preservation of the equalities \eqref{eq:HarmonicGap} at the discrete level. 
The family ${\cal P}^{-} _r \Lambda^k$ can then be translated back
into continuous/Nédélec/Raviart-Thomas complex
\cite{raviart1977mixed,raviart1977primal,nedelec1980mixed} for the
proxies. For example, the discrete counterpart of \eqref{eq:deRhamGradCurl} is:
\begin{equation}
  \label{eq:deRhamGradCurlDiscrete}
  \PP_{k+1}
  \diag{\nabla}
  \bN_k
  \diag{\nablaperp \cdot}
  \dd \PP_k,
\end{equation}
where $\PP_{k+1}$ is the continuous finite elements space of degree
$k+1$, $\bN_k$ is two-dimensional Nédélec approximation space
(vectorial finite element space that is tangential conforming), and
$\dd \PP_{k}$ is the discontinuous Galerkin approximation space.
The discrete counterpart of \eqref{eq:deRhamCurlDiv} is:
\begin{equation}
  \label{eq:deRhamCurlDivDiscrete}
  \PP_{k+1}
  \diag{\nablaperp}
  \RT_k
  \diag{- \nabla \cdot}
  \dd \PP_k,
\end{equation}
where $\RT_k$ is the two-dimensional Raviart-Thomas approximation space
(vectorial finite element space that is normal conforming).

\subsection{Nonconforming discrete de-Rham complex}

We begin by providing some definitions on the mesh we are working on. 
We work on the two-dimensional torus $\T^2$, which is supposed to
be oriented. This domain is divided into a set of cells which are either triangles or quadrangles.
The set of sides
of the mesh is denoted by $\cS$. Each side $S$ of the mesh is
  oriented once for all, and we denote by $\bn_S$ the clockwise normal to the
  side. Each side $S$ has two neighbouring cells, the one for which $\bn_S$
  is outgoing is the \emph{left cell}, and the one for
  which $\bn_S$ is ingoing is the \emph{right cell}.
The \emph{jump} of a scalar
$f$ that is continuous on the cells and discontinuous across the faces
is defined as
$$\forall S \in \cS \qquad \Jump{f}_S:= f_L - f_R,$$
where $f_L$ is the value on the left cell and $f_R$ is the value on the right
cell. 

Recently, it was proposed to relax the continuity
constraints induced by the conformal hypothesis \cite{licht2017complexes}.
Contrarily to the classical conformal approximation, the approximation of the
vector spaces 
proposed in \cite{licht2017complexes}
are completely discontinuous, without any
hypothesis  on the continuity of the normal component (which holds for
$\RT$) or the tangential component (which holds for $\bN$). 
This idea was extended in \cite{PerrierExact2024} for finding basis with
fewer degrees of freedom. 
The different approximation space for vectors proposed in
\cite{PerrierExact2024} will be denoted by
$\Bdiv_k$ and $\Bcurl_k$. 
In this case,
the classical derivation operators cannot be applied to these
approximation space, and the derivative in the sense of distributions is used
instead
$$
\forall \bu \in \Bdiv_k \qquad
\dnabla \cdot \bu :=
\left\{
\begin{array}{l@{\qquad}l}
  \forall C \in \cC &
  \left(\dnabla \cdot \bu \right)^C := \nabla \cdot \bu \\ 
  \forall S \in \cS &
  \left(\dnabla \cdot \bu \right)^S := -\Jump{\bu} \cdot \bn_S.\\ 
\end{array}
\right.
$$
and
$$
\forall \bu \in \Bcurl_k \qquad
\dnablaperp \cdot \bu :=
\left\{
\begin{array}{l@{\qquad}l}
  \forall C \in \cC &
  \left(\dnablaperp \cdot \bu \right)^C := \nablaperp \cdot \bu \\ 
  \forall S \in \cS &
  \left(\dnablaperp \cdot \bu \right)^S :=
  \Jump{\bu^\perp} \cdot \bn_S.\\ 
\end{array}
\right.
$$
We see that the image of the operators $\left( \dnabla \cdot \right)$ and
$\left( \dnablaperp \cdot \right)$ is in a Cartesian product of
cell space and face space, that we will denote in general $\bbC_k$. 
In \cite{PerrierExact2024}, several such vectorial approximation spaces
were proposed, for which we proved that the discrete and continuous
cohomology spaces are matching on $\T^2$. The proposed approximation spaces
may then be put in the following complex:
\begin{itemize}
\item If the choice of proxy is \eqref{eq:Proxy},
  \begin{equation}
    \label{eq:deRhamGradCurlDiscreteNC}
    \bbA_{k+1}
    \diag{\nabla}
    \Bcurl_k
    \diag{\dnablaperp \cdot}
    \bbC_k,
  \end{equation}
  where $\bbA_{k+1}$ is the space of continuous finite element spaces, i.e.
  either $\PP_{k+1}$ on triangular meshes, or
  $\QQ_{k+1}$ on quadrangular meshes, and $\Bcurl_k$ is discussed in the
  remaining part of the section.  
\item If the choice of proxy is \eqref{eq:ProxyOrtho}, then
  \begin{equation}
    \label{eq:deRhamCurlDivDiscreteNC}
    \bbA_{k+1}
    \diag{\nablaperp}
    \Bdiv_k
    \diag{- \dnabla \cdot}
    \bbC_k,
  \end{equation}
  where $\Bdiv_k$ is discussed in the
  remaining part of the section.  
\end{itemize}

Note that in \eqref{eq:deRhamCurlDivDiscreteNC} and \eqref{eq:deRhamGradCurlDiscreteNC}, the spaces $\bbC_k$ and $\bbA_{k+1}$ are the same, only
$\Bdiv_k$ and $\Bcurl_k$ are different.
In \cite{PerrierExact2024}, the following approximation spaces were proposed;
the first family is based on the conformal case, whereas the two other
proposed approximation spaces are optimal in the number of degrees of freedom. 
\begin{enumerate}
\item {\bf Approximation spaces directly based on discontinuous
  versions of Raviart-Tho\-mas/Nédélec finite elements.} In this version of the
  discontinuous spaces, proposed in \cite{licht2017complexes},
  the polynomial basis are exactly the same as for the
  conformal approximation on each  cell, but all the continuity
  constraints are relaxed. These spaces were denoted by
  $\dd \RT_k$ and $\dd \bN_k$. This leads to
  $$\bbC_k = \dd \PP_k ( \cC ) \times \dd \PP_k ( \cS ),$$
  for triangular meshes and to
  $$\bbC_k =  \dd \QQ_k ( \cC ) \times \dd \PP_k ( \cS ) ,$$
  for quadrangular meshes. These approximation space work fine, but
  some degrees of freedom are useless for ensuring the matching between
  the discrete and continuous cohomology groups.
  Also the Raviart-Thomas/Nédélec finite
  element basis should be generated which is not always straightforward on
  triangular meshes. 
\item {\bf Optimal approximation spaces on triangles.}
  On triangles, the classical approximation space for vectors, namely the one
  obtained by tensorizing the classical approximation space for
  scalars can be put in the diagrams \eqref{eq:deRhamGradCurlDiscreteNC}
  and \eqref{eq:deRhamCurlDivDiscreteNC}, as
  addressed in \cite{PerrierExact2024}; in
  this case, $\Bcurl_k = \Bdiv_k = \bdPP_k$, and
  $$\bbC_k = \dd \PP_{k-1} ( \cC ) \times \dd \PP_{k} ( \cS ).$$
\item {\bf Optimal approximation spaces on quadrangles.}
  The case of quadrangular meshes is quite surprising because the lowest order
  approximation space (namely with $\bbA_{k+1} = \QQ_1$) does not give the classical
  finite volume vector space, but rather an enriched version including
  three basis vector instead of two:
  $$\Bdiv_{0} = \Span \left(
  \left(
  \begin{array}{c}
    1 \\
    0
  \end{array}
  \right),
    \left(
  \begin{array}{c}
    0 \\
    1
  \end{array}
  \right),
  \left(
  \begin{array}{c}
    -x \\
    y
  \end{array}
  \right)
  \right),$$
  and
  $$\Bcurl_{0} = \Span \left(
  \left(
  \begin{array}{c}
    1 \\
    0
  \end{array}
  \right),
    \left(
  \begin{array}{c}
    0 \\
    1
  \end{array}
  \right),
  \left(
  \begin{array}{c}
    y \\
    x
  \end{array}
  \right)
  \right).$$
  In the general case, following \cite{PerrierExact2024}
  we denote by $\dd \QQ_{i,j}$ the space of polynomials
  of degree lower than $i$ in $x$ and lower than $j$ in $y$, and
  we set
  $$\Bdiv_k
  = \left(
    \begin{array}{c}
    \dd \QQ_{k,k} + \dd \QQ_{k+1,k-1}  \\
    \dd \QQ_{k,k} + \dd \QQ_{k-1,k+1}
    \end{array}
    \right)
  \oplus \Span \left(
  \begin{array}{c}
    - x^{k+1} y^k \\
    x^k y^{k+1}
  \end{array}
  \right)
  %% \left[ \left( \dd \QQ_{k,k} + \dd \QQ_{k+1,k-1} \right) \times
  %% \left( \dd \QQ_{k,k} + \dd \QQ_{k-1,k+1} \right) \right]
  %% \oplus \Span \left(
  %% \begin{array}{c}
  %%   - x^{k+1} y^k \\
  %%   x^k y^{k+1}
  %% \end{array}
  %% \right)
  $$
  and
  $$\Bcurl_{k}
  =
  \left(
  \begin{array}{c}
    \dd \QQ_{k,k} + \dd \QQ_{k-1,k+1} \\
    \dd \QQ_{k,k} + \dd \QQ_{k+1,k-1}
  \end{array}
  \right)
  \oplus \Span \left(
  \begin{array}{c}
    x^k y^{k+1} \\
    x^{k+1} y^k \\
  \end{array}
  \right).
  $$
  Last, the space $\bbC_k$ is
  $$\bbC_k =
  \dd \widecheck{\QQ}_{k} ( \cC ) \times \dd \PP_{k-1} ( \cS ),
  $$
  with
  $$\dd \widecheck{\QQ}_{k} := \dd \QQ_{k,k-1} + \dd \QQ_{k-1,k},$$
  or more clearly, $\dd \widecheck{\QQ}_{k}$ contains all
  canonical monomials of 
  $\dd \QQ_k$ except for $x^k y^k$. As remarked in \cite{PerrierExact2024},
  the difference with the discontinuous Nédélec and Raviart-Thomas
  elements is, in this case, only of a single element, and the benefits
  of this optimal basis with respect to the classical one is less evident. 
\end{enumerate}

\subsection{Scalar products and adjoint curl and divergence}

The scalar product on the finite element spaces will be denoted with
$ \scalar{\cdot}{\cdot}$ with the finite element space as index, e.g.
$ \scalar{\cdot}{\cdot}_{\bbA_{k+1}}$. For the spaces $\bbA_{k+1}$,
$\Bcurl_{k}$ and $\Bdiv_{k}$, the scalar product is the standard
$L^2$ product. 

As the space $\bbC_k$ includes in general both components in the cell and
components in the face, the scalar product in $\bbC_k$ is the
graph scalar product:
$$\scalar{f}{g}_{\bbC_k}
:= \Sum{c \in \cC}{} \Int{c}{} f^C g^C + \Sum{S \in \cS}{} \Int{S}{} f^S g^S.$$
The definition of these scalar products allow to define the adjoint
differential operators as follows:

\begin{definition}[Adjoint differential operators]
  Based on the complex \eqref{eq:deRhamGradCurlDiscreteNC}, the adjoint
  gradient $\nabla^{\star}$ is defined as
  $$
  \begin{array}{l}
    \nabla^{\star} \, : \, \Bcurl_k \quad \longmapsto \quad \bbA_{k+1}
    \\
    \text{such that}\quad
  \forall \bu \in \Bcurl_k \quad
  \forall f \in \bbA_{k+1} \qquad
  \scalar{\nabla^{\star} \bu}{f}_{\bbA_{k+1}}
  =
  \scalar{\bu}{\nabla f}_{\Bcurl_k},
  \end{array}
  $$
  whereas the adjoint curl $\left( \nablaperp \cdot \right)^{\star}$ is defined
  as
  $$
  \begin{array}{l}
    \left( \nablaperp \cdot \right)^{\star} \, : \, \bbC_{k} \quad \longmapsto \quad \Bcurl_k
    \\
    \text{such that}\quad

    \forall \bu \in \Bcurl_k \quad
    \forall f \in \bbC_{k} \qquad
    \scalar{\left( \nablaperp \cdot \right) ^{\star} f }{\bu }_{\Bcurl_k}
    =
    \scalar{f }{  \nablaperp \cdot \bu }_{\bbC_k}.
  \end{array}
  $$
  Based on the complex \eqref{eq:deRhamCurlDivDiscreteNC}, the adjoint
  curl $\left( \nablaperp \right) ^{\star}$ is defined as
  $$
  \begin{array}{l}
    \left( \nablaperp \right) ^{\star} \, : \, \Bdiv_k \quad \longmapsto \quad \bbA_{k+1}
    \\
    \text{such that}\quad
  \forall \bu \in \Bdiv_k \quad
  \forall f \in \bbA_{k+1} \qquad
  \scalar{\left( \nablaperp \right) ^{\star} \bu}{f}_{\bbA_{k+1}}
  =
  \scalar{\bu}{\nablaperp f}_{\Bdiv_k},
  \end{array}
  $$
  whereas the adjoint divergence
  $\left( \nabla \cdot \right)^{\star}$ is defined
  as
  $$
  \begin{array}{l}
    \left( \nabla \cdot \right)^{\star} \, : \, \bbC_{k} \quad \longmapsto \quad \Bdiv_k
    \\
    \text{such that}\quad
  \forall \bu \in \Bdiv_k \quad
  \forall f \in \bbC_{k} \qquad
  \scalar{\left( \nabla \cdot \right) ^{\star} f }{\bu }_{\Bdiv_k}
  =
  \scalar{f }{  \nabla \cdot \bu }_{\bbC_k}.
  \end{array}
  $$
\end{definition}
Note that at the continuous level, the gradient and the divergence
are adjoint up to a sign, this means that at the discrete level,
$-\nabla^{\star}$ is an approximation of the divergence on
$\Bcurl_k$. In the same manner, $-\left( \nablaperp \right)^{\star}$ is
an approximation of the curl on $\Bdiv_k$.  

These adjoint differential operators play a key role in this article,
because they are the ones that will be
proven to be naturally preserved by the discontinuous Galerkin method. 

Last, we remark that it is possible to define these adjoint operators
as the composition of exterior derivative with Hodge-star operators.
This was done in the first version of this article \cite{firstVersion},
but was removed in the new version for clarity purpose.

\subsection{Projection operator in the $\bbC_k$ space}

  The spaces $\bbA$, $\Bcurl_k$ and
  $\Bdiv_k$ are rather usual approximation spaces, and
  the classical $L^2$ projector is used on these spaces. For these spaces, we
  denote by $\pi$ with the approximation space as an exponent the
  $L^2$ projection: $\pi^{\bbA_k}$, $\pi^{\Bcurl_k}$ and $\pi^{\Bdiv_k}$. \\
  \indent The approximation space $\bbC_k$ is less classical, as it is
  a Cartesian product of degrees of freedom on the cells and degrees of freedom
  on the faces, still it is similar as the approximation space of the
  Hybrid High Order methods \cite{di2016review,di2018introduction,di2020hybrid}.
  Therefore, following \cite{di2016review,di2018introduction,di2020hybrid}, we
  define the projection on $\bbC_k$ as follows
  \begin{definition}[Projection operator on $\bbC_k$]
    \label{def:projectionCk}
    Given a smooth function $g \in C^{\infty} \left( \Omega \right)$,
    the projection operator $\pi^{\bbC_k}$ is defined as
    $$
    \Sum{C \in \cC}{} \Int{C}{} g \varphi^C
    +
    \Sum{S \in \cS}{} \Int{S}{} g \varphi^S
    =
    \Sum{C \in \cC}{} \Int{C}{} \left( \pi^{\bbC_k} g \right)^C \varphi^C
    +
    \Sum{S \in \cS}{} \Int{S}{} \left( \pi^{\bbC_k} g \right)^S  \varphi^S.
    $$
  \end{definition}
  The projection operator defined in \autoref{def:projectionCk} ensures then
  the following proposition:
  \begin{prop}[Commutation property of the projection operator on $\bbC_k$]~ \\
    The following diagram
    \label{prop:projectionCk}
      \begin{center}
        \begin{tikzpicture}
          \node (Hdiv)  at (0,0)  {$\left( C^{\infty} \left( \Omega \right) \right)^2$};
          \node (L2)    at (3.5,0)  {$C^{\infty} \left( \Omega \right)$};
          \node (Bdivk) at (0,-2) {$\Bdiv_k$};
          \node (Ck)    at (3.5,-2) {$\bbC_k$};
          \draw[<-] (Hdiv) -- (L2) node[midway, above] {$\left( \nabla \cdot \right)^{\star}$};
          \draw[<-] (Bdivk) -- (Ck) node[midway, above] {$\left( \dnabla \cdot \right)^{\star}$};
          \draw[->] (Hdiv) -- (Bdivk) node[midway, right] {$\pi^{\Bdiv_k}$};
          \draw[->] (L2) -- (Ck) node[midway, right] {$\pi^{\bbC_k}$};
        \end{tikzpicture}
      \end{center}
      commutes, and so does the following diagram
      \begin{center}
        \begin{tikzpicture}
          \node (Hcurl)  at (0,0)  {$\left( C^{\infty} \left( \Omega \right) \right)^2$};
          \node (L2)    at (3.5,0)  {$C^{\infty} \left( \Omega \right)$};
          \node (Bcurlk) at (0,-2) {$\Bcurl_k$};
          \node (Ck)    at (3.5,-2) {$\bbC_k$};
          \draw[<-] (Hcurl) -- (L2) node[midway, above] {$\left( \nablaperp \cdot \right)^{\star}$};
          \draw[<-] (Bcurlk) -- (Ck) node[midway, above] {$\left( \dnablaperp \cdot \right)^{\star}$};
          \draw[->] (Hcurl) -- (Bcurlk) node[midway, right] {$\pi^{\Bcurl_k}$};
          \draw[->] (L2) -- (Ck) node[midway, right] {$\pi^{\bbC_k}$};
        \end{tikzpicture}
      \end{center}
  \end{prop}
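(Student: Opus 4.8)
The plan is to prove the left (divergence) square in detail; the right (curl) square then follows \emph{verbatim} after replacing $\nabla$ by $\nablaperp$ and the normal jump $\Jump{\bu}\cdot\bn_S$ by the tangential jump $\Jump{\bu^\perp}\cdot\bn_S$. Since both $\pi^{\Bdiv_k}\left(\nabla\cdot\right)^{\star}g$ and $\left(\dnabla\cdot\right)^{\star}\pi^{\bbC_k}g$ are elements of $\Bdiv_k$, it suffices, by nondegeneracy of the $L^2$ product on $\Bdiv_k$, to show that their scalar products against an arbitrary $\bu\in\Bdiv_k$ agree. I would therefore fix such a $\bu$ and evaluate the two paths separately. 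For the left-hand path, using that $\pi^{\Bdiv_k}$ is the $L^2$-orthogonal projection onto $\Bdiv_k$ and $\bu\in\Bdiv_k$, I get $\scalar{\pi^{\Bdiv_k}\left(\nabla\cdot\right)^{\star}g}{\bu}_{\Bdiv_k}=\scalar{\left(\nabla\cdot\right)^{\star}g}{\bu}_{L^2}$; since $\T^2$ has no boundary, integration by parts identifies the continuous adjoint as $\left(\nabla\cdot\right)^{\star}g=-\nabla g$, so this path yields $-\Int{\Omega}{}\nabla g\cdot\bu$.

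For the right-hand path, I would first unfold the definition of the discrete adjoint to obtain $\scalar{\left(\dnabla\cdot\right)^{\star}\pi^{\bbC_k}g}{\bu}_{\Bdiv_k}=\scalar{\pi^{\bbC_k}g}{\dnabla\cdot\bu}_{\bbC_k}$, which is legitimate because $\dnabla\cdot\bu\in\bbC_k$. Then, applying the defining relation of $\pi^{\bbC_k}$ from \autoref{def:projectionCk} with test function $\varphi=\dnabla\cdot\bu$, I may replace $\pi^{\bbC_k}g$ by $g$ in the pairing, so that this path equals $\Sum{C\in\cC}{}\Int{C}{}g\left(\dnabla\cdot\bu\right)^C+\Sum{S\in\cS}{}\Int{S}{}g\left(\dnabla\cdot\bu\right)^S$. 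Inserting the explicit definition of $\dnabla\cdot\bu$ (cell part $\nabla\cdot\bu$, face part $-\Jump{\bu}\cdot\bn_S$) rewrites this as $\Sum{C}{}\Int{C}{}g\,\nabla\cdot\bu-\Sum{S}{}\Int{S}{}g\,\Jump{\bu}\cdot\bn_S$.

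The crux is now a cell-by-cell integration by parts, $\Int{C}{}g\,\nabla\cdot\bu=-\Int{C}{}\nabla g\cdot\bu+\int_{\partial C}g\,\bu\cdot\bn_{\partial C}$. Summing the volume terms reproduces $-\Int{\Omega}{}\nabla g\cdot\bu$. For the boundary terms I would invoke the orientation convention of the mesh: on a side $S$ the outward normal of the left cell is $\bn_S$ and that of the right cell is $-\bn_S$, while $g$ is single-valued across $S$ because it is globally continuous; the two cell contributions at $S$ therefore collapse into $\Int{S}{}g\,\Jump{\bu}\cdot\bn_S$. This is precisely the term that cancels the face contribution $-\Sum{S}{}\Int{S}{}g\,\Jump{\bu}\cdot\bn_S$, leaving $-\Int{\Omega}{}\nabla g\cdot\bu$, which matches the left-hand path. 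As $\bu\in\Bdiv_k$ was arbitrary, the divergence square commutes.

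The only delicate point — and the step I would check most carefully — is this sign and orientation bookkeeping: one must verify that the reassembled per-cell boundary integral is exactly $+\Jump{\bu}\cdot\bn_S$ with the jump oriented from the left to the right cell, so that it annihilates the $-\Jump{\bu}\cdot\bn_S$ face part of the distributional divergence rather than doubling it. Everything else is a mechanical unwinding of the definitions of the adjoint operator and of the two projectors $\pi^{\Bdiv_k}$ and $\pi^{\bbC_k}$. For the curl square the identical argument applies with $\nablaperp\cdot\bu=\partial_x u_2-\partial_y u_1$, whose per-cell integration by parts produces the trace $\bu^\perp\cdot\bn_{\partial C}$ and hence, after summation over sides, the jump $\Jump{\bu^\perp}\cdot\bn_S$ that cancels the matching face part of $\dnablaperp\cdot\bu$, while the volume term gives $-\Int{\Omega}{}\nablaperp g\cdot\bu=\scalar{\left(\nablaperp\cdot\right)^{\star}g}{\bu}_{L^2}$.
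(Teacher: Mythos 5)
Your proposal is correct and follows essentially the same route as the paper's proof: identify $\left( \nabla \cdot \right)^{\star} g = -\nabla g$ on the torus, unfold the definitions of the discrete adjoint and of $\pi^{\bbC_k}$ against an arbitrary test element of $\Bdiv_k$, and close the argument with a cell-wise Stokes formula using the continuity of $g$ to reassemble the boundary terms into the jump $\Jump{\bu}\cdot\bn_S$ that cancels the face part of $\dnabla \cdot \bu$. Your sign and orientation bookkeeping (outward normal $\bn_S$ for the left cell, $-\bn_S$ for the right) matches the paper's convention, so the cancellation you flag as the delicate step is indeed correct.
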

  \begin{proof}
    The proof is performed for the first diagram, the proof for the second
    following the same lines. 
    We denote by $g$ an element of $C^{\infty} (\Omega)$. Then for any
    $\bv \in \left( C^{\infty} (\Omega) \right)^2$, we have
    $$\Int{\Omega}{} \nabla \cdot \left( g \bv \right)
    = \Int{\Omega}{} g \nabla \cdot \bv + \Int{\Omega}{} \nabla g \cdot \bv,$$
    but as $\Omega$ is the two dimensional torus, the left hand side is
    zero. This directly gives
    $$\left( \nabla \cdot \right)^{\star} g =
    - \nabla g.$$
    The projection of $\left(- \nabla g\right)$ on $\Bdiv_k$ ensures
    \begin{equation}
      \label{eq:pirondnabla}
      \forall \bPsi \in \Bdiv_k \qquad
      - \Sum{C \in \cC}{} \Int{C}{} \nabla g \cdot \bPsi
      =
      - \Sum{C \in \cC}{} \Int{C}{} \pi^{\Bdiv_k} \left( \nabla g \right)
      \cdot \bPsi
      =
      \Sum{C \in \cC}{} \Int{C}{} \pi^{\Bdiv_k} \circ \nabla^{\star}
      \left( g \right)
      \cdot \bPsi
      .
    \end{equation}
    On the other hand, the definition of the projection of $g$ on
    $\bbC_k$ ensures
    $$
    \forall \varphi \in \bbC_k \qquad 
    \Sum{C \in \cC}{} \Int{C}{} g \varphi^C
    +
    \Sum{S \in \cS}{} \Int{S}{} g \varphi^S
    =
    \Sum{C \in \cC}{} \Int{C}{} \left( \pi^{\bbC_k} g \right)^C \varphi^C
    +
    \Sum{S \in \cS}{} \Int{S}{} \left( \pi^{\bbC_k} g \right)^S  \varphi^S.
    $$
    Using the definition of $\left( \dnabla \cdot \right)^{\star}$ gives
    $$\forall \bPsi \in \Bdiv_k \qquad
    \Sum{C \in \cC}{} \Int{C}{} \left( \dnabla \cdot \right)^{\star}
    \circ \pi^{\bbC_k} \left( g \right) \cdot \bPsi
    =
    \Sum{C \in \cC}{} \Int{C}{} \left( \pi^{\bbC_k} g \right)^C
    \nabla \cdot \bPsi
    -
    \Sum{S \in \cS}{} \Int{S}{} \left( \pi^{\bbC_k} g \right)^S
    \Jump{\bPsi \cdot \bn_S}.
    $$
    Combining the two last equalities provides
    \begin{equation}
      \label{eq:nablarondpi}
      \forall \bPsi \in \Bdiv_k \qquad
      \Sum{C \in \cC}{} \Int{C}{} \left( \dnabla \cdot \right)^{\star}
      \circ \pi^{\bbC_k} \left( g \right) \cdot \bPsi
      =
      \Sum{C \in \cC}{} \Int{C}{}  g
      \nabla \cdot \bPsi
      -
      \Sum{S \in \cS}{} \Int{S}{}  g \Jump{\bPsi \cdot \bn}.
    \end{equation}
    It remains to apply the Stokes formula on each cell $C$ for getting
    $$
    \begin{array}{r@{\, = \, }l}
      - \Sum{C \in \cC}{} \Int{C}{}
      \nabla g \cdot \bPsi
      & - \Sum{C \in \cC}{} \Int{C}{}
      \nabla \cdot \left( g  \bPsi \right)
      + \Sum{C \in \cC}{} \Int{C}{} g \nabla \cdot \bPsi\\
      & - \Sum{C \in \cC}{} \Sum{S \in \partial C}{} \Int{S}{}
      g  \bPsi \cdot \bn^{\mathrm{out}}
      + \Sum{C \in \cC}{}  \Int{C}{}  g \nabla \cdot \bPsi\\
      & - \Sum{S \in \cS}{} \Int{S}{}
      \Jump{g  \bPsi \cdot \bn_S}
      + \Sum{C \in \cC}{}  \Int{C}{}  g \nabla \cdot \bPsi\\
      - \Sum{C \in \cC}{} \Int{C}{}
      \nabla g \cdot \bPsi
      & - \Sum{S \in \cS}{} \Int{S}{}
      g \Jump{\bPsi \cdot \bn_S}
      + \Sum{C \in \cC}{}  \Int{C}{} g \nabla \cdot \bPsi.\\
    \end{array}
    $$
    This last equality provides the equality of the left hand side of
    \eqref{eq:pirondnabla} with the right hand side of
    \eqref{eq:nablarondpi}, which induces the equality of the right hand side
    of \eqref{eq:pirondnabla} and the right hand side of
    \eqref{eq:nablarondpi}, which ends the proof.
  \end{proof}
  Note that the second commutative diagram of \autoref{prop:projectionCk}
  will be relevant for initializing divergence free fields in the
  discrete adjoint
  sense in \autoref{sec:LieDerivative}; especially, starting from a
  $g \in C^{\infty} (\Omega)$ the path
  $\pi^{\Bcurl_k} \circ  \left( \nablaperp \cdot \right)^{\star} (g)$ ensures 
  a high order approximation of $\nablaperp g$, whereas
  the path $\left( \dnablaperp \cdot \right)^{\star} \circ  \pi^{\bbC_k} (g)$
  ensures that the discrete field is divergence free in the
  discrete adjoint sense.

\section{Discrete preservation of the curl or the divergence}
\label{sec:DiscreteCurlDiv}

\subsection{Numerical scheme}

In this section, we consider a system of conservation law in which one of the
unknowns is a vector $\bu$.
We denote by $\barG$ the flux of the conservation law involving $\bu$
\begin{equation}
  \label{eq:consLawVector}
  \partial_t \bu + \bnabla \cdot \barG = 0.
\end{equation}
We consider the numerical resolution of \eqref{eq:consLawVector} on a
periodic domain. The discontinuous Galerkin method reads 
\begin{equation}
  \label{eq:DGConsLawVector}
  \text{Find $\bu \in \bbB$} \quad \forall \bv \in \bbB \qquad 
  \Sum{C \in \cC}{}
  \Int{C}{}
  \bv \cdot \partial_t \bu
  -\Sum{C \in \cC}{}
  \Int{C}{}
  \barG : \bnabla \bv
  +
  \Sum{S \in \cS}{}
  \Int{S}{}
  \Jump{\bv} \cdot \bGtilde=0,
\end{equation}
where $\bGtilde$ is the numerical flux, and $\bbB$ is the approximation
space for vectors that is not defined for the moment.

\subsection{Discrete preservation of the vorticity}

In this section, we consider the case in which $\barG$ is proportional
to the identity $\barG = g \mathrm{I}_d$, which means that 
we have $\bnabla \cdot \barG = \nabla g$. This means that
the conservation law on $\bu$ \eqref{eq:consLawVector} may be simplified as
\begin{equation}
  \label{eq:uCurl}
  \partial_t \bu + \nabla g = 0,
\end{equation}
where $g$ is a scalar that may depend nonlinearly on the variables of the
system. Then the discontinuous Galerkin discretization reads
\begin{equation}
  \label{eq:DGCurl}
  \text{Find $\bu \in \Bdiv_k$} \quad \forall \bv \in \Bdiv_k \qquad 
  \Sum{C \in \cC}{}
  \Int{C}{}
  \bv \cdot \partial_t \bu
  -\Sum{C \in \cC}{}
  \Int{C}{}
  g \nabla \cdot \bv
  +
  \Sum{S \in \cS}{}
  \Int{S}{}
  \Jump{\bv} \cdot \bGtilde=0,
\end{equation}
where $\bGtilde$ is the numerical flux. Equation \eqref{eq:uCurl} induces
formally the conservation of  $\nablaperp \cdot \bu$,
and the discontinuous Galerkin
method \eqref{eq:DGCurl} may have a similar property summarized in the following
proposition. 

\begin{prop}[Conservation of $\left( \nablaperp \bu \right)^{\star}$]
  \label{prop:curlConservation}
  Consider the numerical scheme \eqref{eq:DGCurl}. If
  $\bGtilde$ is parallel to $\bn_S$,
  then the numerical scheme \eqref{eq:DGCurl} induces
  $\partial_t \left( \nablaperpStar \bu \right) = 0$.
\end{prop}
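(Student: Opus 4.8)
The plan is to show that $\partial_t \nablaperpStar \bu$ vanishes by pairing it against an arbitrary test function $f \in \bbA_{k+1}$ and invoking the defining property of the adjoint. Since $\nablaperpStar$ is a fixed, time-independent linear operator, for every $f \in \bbA_{k+1}$ one has
$$\scalar{\partial_t \nablaperpStar \bu}{f}_{\bbA_{k+1}} = \partial_t \scalar{\nablaperpStar \bu}{f}_{\bbA_{k+1}} = \partial_t \scalar{\bu}{\nablaperp f}_{\Bdiv_k} = \scalar{\partial_t \bu}{\nablaperp f}_{\Bdiv_k}.$$
Because $f \in \bbA_{k+1}$, the complex \eqref{eq:deRhamCurlDivDiscreteNC} guarantees $\nablaperp f \in \Bdiv_k$, so $\nablaperp f$ is an admissible test function in \eqref{eq:DGCurl}. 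Testing the scheme with $\bv = \nablaperp f$ then rewrites $\scalar{\partial_t \bu}{\nablaperp f}_{\Bdiv_k}$ as the sum of a volume term $\Sum{C \in \cC}{} \Int{C}{} g\, \nabla \cdot (\nablaperp f)$ and a surface term $-\Sum{S \in \cS}{} \Int{S}{} \Jump{\nablaperp f} \cdot \bGtilde$, and it remains to prove both vanish.

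The volume term vanishes identically: on each cell $f$ is a polynomial, hence smooth, and $\nabla \cdot \nablaperp f = -\partial_x \partial_y f + \partial_y \partial_x f = 0$. This is exactly the discrete counterpart of the continuous identity $\nabla \cdot \nablaperp = 0$ that produces the formal conservation of $\nablaperp \cdot \bu$ at the continuous level.

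The surface term is where the hypothesis on the numerical flux enters, and it is the only genuinely nontrivial step. The decisive observation is that, since $f$ belongs to the conforming space $\bbA_{k+1}$, it is continuous across every side $S$, so its tangential derivative along $S$ is single-valued and only its normal derivative may jump; consequently $\Jump{\nabla f}$ is parallel to $\bn_S$. Applying the fixed $\pi/2$ rotation $\perp$, which commutes with the jump (both being linear operations), gives $\Jump{\nablaperp f} = \Jump{\nabla f}^{\perp}$, a vector parallel to $\bn_S^{\perp}$, i.e. tangent to $S$. Since $\bGtilde$ is assumed parallel to $\bn_S$, the integrand $\Jump{\nablaperp f} \cdot \bGtilde$ is proportional to $\bn_S^{\perp} \cdot \bn_S = 0$, and every side contribution vanishes.

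Combining the two, $\scalar{\partial_t \bu}{\nablaperp f}_{\Bdiv_k} = 0$ for every $f \in \bbA_{k+1}$, hence $\scalar{\partial_t \nablaperpStar \bu}{f}_{\bbA_{k+1}} = 0$ for all $f$, which forces $\partial_t \nablaperpStar \bu = 0$. I expect the only delicate points to be checking that $\perp$ commutes with the jump and that the orientation conventions for $\bn_S$ and the left/right cells are consistent with the sign carried by the surface term in \eqref{eq:DGCurl}; neither affects the conclusion, since the surface integrand vanishes pointwise regardless of sign.
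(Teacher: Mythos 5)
Your proof is correct and follows essentially the same route as the paper: test the scheme with $\bv = \nablaperp f$ for $f \in \bbA_{k+1}$, observe that $\nabla \cdot \nablaperp f = 0$ on each cell and that the jump term vanishes because continuity of $f$ across sides kills the component of $\Jump{\nablaperp f}$ paired with a flux parallel to $\bn_S$, then conclude via the definition of $\nablaperpStar$ and its commutation with $\partial_t$. Your phrasing of the surface argument ($\Jump{\nabla f}$ is normal, hence $\Jump{\nablaperp f}$ is tangential) is just a rotated restatement of the paper's observation that $\Jump{\nablaperp f \cdot \bn_S} = 0$.
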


\begin{proof}
  If $\bGtilde$ is parallel to $\bn_S$, then the numerical flux may be
  rewritten
  $$\bGtilde = \gtilde \bn_S,$$
  where $\gtilde$ is a scalar, 
  so that the numerical scheme \eqref{eq:DGCurl} may be rewritten
  \begin{equation*}
    \text{Find $\bu$} \quad \forall \bv \qquad 
    \Sum{C \in \cC}{}
    \Int{C}{}
    \bv \cdot \partial_t \bu
    -\Sum{C \in \cC}{}
    \Int{C}{}
    g \nabla \cdot \bv
    +
    \Sum{S \in \cS}{}
    \Int{S}{}
    \Jump{\bv \cdot \bn_S} \gtilde=0. 
  \end{equation*}
  In this last equation, suppose that $\bv = \nablaperp f$ for
  $f \in \bbA_{k+1}$.
  Then the  normal jump vanishes at each side:
  $$\forall S \in \cS \qquad \Jump{\bv \cdot \bn_S} = 0,$$
  and on each cell, its strong divergence is $0$:
  $$\nabla \cdot  \bv = 0,$$
  so that
  $$\forall f \in  \bbA_{k+1} \qquad 
  \Sum{C \in \cC}{}
  \Int{C}{}
  \left( \nablaperp f \right)  \cdot \partial_t \bu = 0.
  $$
  This last equation means that
  $$\forall f \in \bbA_{k+1} \qquad
  \scalar{\nablaperp f}{\partial_t \bu}_{\Bdiv_k} = 0.$$
  By definition of $\nablaperpStar$, we get
  $$\forall f \in \bbA_{k+1} \qquad
  \scalar{f}{\nablaperpStar \left(  \partial_t  \bu \right)}_{\Bdiv_k}
  = 0.$$
  It remains to prove that
  $\nablaperpStar \left( \partial_t \bu \right)
  = \partial_t \left( \nablaperpStar \bu \right)$.
  Denoting by $f$ an element of $\bbA_{k+1}$, we have
  $$
  \begin{array}{r@{\, = \, }l}
    \scalar{\partial_t \left( \nablaperpStar \bu \right)}{f}_{\bbA_{k+1}}
    &
    \Int{\Omega}{} \partial_t \left(\nablaperpStar \bu \right) f \\
    &
    \partial_t \left( \Int{\Omega}{} \left( \nablaperpStar \bu\right) f \right)
    \\
    &
    \partial_t \left( \Int{\Omega}{}  \bu \cdot \nablaperp f \right)
    \\
    &
    \Int{\Omega}{}  \left( \partial_t \bu \right) \cdot \nablaperp f
    \\
    &
    \scalar{\partial_t \bu}{\nablaperp f}_{\Bdiv_k} \\
    \scalar{\partial_t \left( \nablaperpStar \bu \right)}{f}_{\bbA_{k+1}}
    &
    \scalar{\nablaperpStar \left( \partial_t \bu \right)}{f}_{\bbA_{k+1}}, \\
  \end{array}
  $$
  which ends the proof. 
\end{proof}

\begin{remark}[full discretization in space: effect of quadrature formula]
    The \autoref{prop:curlConservation} was proven in the non fully spatial
    discretized case, namely without the use of quadrature formula. It is
    important to note that both the cell and face integral of the spatial
    discretisation are vanishing \emph{pointwise}, therefore they 
    vanish whatever the quadrature formula used. Concerning the
    adjoint curl that is preserved, the numerical computation of 
    $\left( \nablaperp \right)^{\star}$ should take exactly the same quadrature
    formula as the one used for computing  the
    mass matrix. For example, if the mass matrix is lumped, then
    the adjoint curl should be defined with the same lumped mass matrix. 
\end{remark}

\begin{remark}[Time discretization]
  \autoref{prop:curlConservation} adresses the semi-discrete case (discrete
  in space, but not in time). A proof for the fully discrete case can be done
  by following exactly the same lines as the proof of
  \autoref{prop:curlConservation}, by replacing the time derivative by its
  discretization, still this leads to a cumbersome proof. Actually, the
  extension to the time discrete case is a simple consequence
  of \cite[Section 3]{mclachlan2024functional}, which states that if
  the semi-discrete scheme preserves the discrete curl, then all
  $B$-series (Butcher series) \cite{hairer1974butcher,mclachlan2016b}
  time integrators (including all Runge-Kutta ones),
  also preserve the discrete curl.  
\end{remark}

\begin{remark}[Lax-Friedrich flux with normal diffusion]
  A widely used numerical flux is the Lax-Friedrich flux which reads
\begin{equation}
  \label{eq:LaxFriedrich}
  \bGtilde \left( \barG_L, \bu_L,\barG_R,\bu_R,\bn_S \right)
  = \dfrac{\barG_L \bn_S + \barG_R \bn_S }{2}
  + \dfrac{\lambda}{2} \left( \bu_L - \bu_R \right),
\end{equation}
where $\lambda$ is the maximum absolute value of the eigenvalues of the system.
Dealing with the
system \eqref{eq:uCurl}, the numerical flux \eqref{eq:LaxFriedrich}
can be simplified as
\begin{equation*}
  \bGtilde \left( g_L, \bu_L,g_R,\bu_R,\bn_S \right)
  = \dfrac{g_L \bn_S + g_R \bn_S}{2}
  + \dfrac{\lambda}{2} \left( \bu_L - \bu_R \right). 
\end{equation*}
The centered part of the flux is clearly parallel to the normal, and should
not be modified because it ensures the consistency of the numerical scheme.
The diffusive part may be decomposed into its normal and tangential part as
$$
\left( \bu_L - \bu_R \right)
= \bn_S \bn_S ^T \left( \bu_L - \bu_R \right)
+ \left( \mathrm{I}_d - \bn_S \bn_S ^T \right)
\left( \bu_L - \bu_R \right).
$$
Therefore an easy way to ensure the hypothesis of
\autoref{prop:curlConservation} is to use the following
\emph{Lax-Friedrich flux with purely normal diffusion}
\begin{equation}
  \label{eq:LaxFriedrichNormal}
  \bGtilde \left( g_L, \bu_L,g_R,\bu_R,\bn_S \right)
  = \dfrac{g_L \bn_S + g_R \bn_S}{2}
  + \dfrac{\lambda \bn_S \bn_S ^T }{2} \left( \bu_L - \bu_R \right). 
\end{equation}
\end{remark}

\begin{remark}[Link with the Hodge Laplacian]
  \label{rem:HodgeLaplacian}
  Putting diffusion operator to the equation \eqref{eq:consLawVector}
  consists, at least formally, in adding a vector Laplacian
  $\boldsymbol{\Delta}$ to the right hand side:
  \begin{equation}
    \label{eq:LaplaceVector}
    \boldsymbol{\Delta} \bu
    =
    \left(
    \begin{array}{c}
      \partial_{xx} \bu_x + \partial_{yy} \bu_x \\
      \partial_{xx} \bu_y + \partial_{yy} \bu_y 
    \end{array}
    \right)
    = \nabla \left( \nabla \cdot \bu \right)
    +\nablaperp \left( \nablaperp \cdot \bu \right).
  \end{equation}
  For ensuring preservation of the vorticity, it is sufficient to
  put a diffusion that preserves also the vorticity, namely only
  the $\nabla \left( \nabla \cdot \bu \right)$ component.
  In \cite[Remark 4]{firstVersion}, we proved that the diffusion induced
  by \eqref{eq:LaxFriedrichNormal} matches with a discrete version
  of the operator $\nabla \left( \nabla \cdot \bu \right)$.
\end{remark}

\subsection{Discrete preservation of the irrotational component}

In this section, we consider the case in which $\barG$ is antisymmetric;
in dimension 2,
$\barG$ may be written as
$$\left(
\begin{array}{cc}
  0 & -g \\
  g & 0 
\end{array}
\right),$$
where $g$ is a scalar that may depend nonlinearly on the variables of the
system. 
This means that $\bu$ ensures the following conservation law
\begin{equation}
  \label{eq:uDiv}
  \partial_t \bu + \nablaperp g  = 0.
\end{equation}
Then the discontinuous Galerkin discretization reads
\begin{equation}
  \label{eq:DGDiv}
  \text{Find $\bu \in \Bcurl_k$} \quad \forall \bv \in \Bcurl_k \qquad 
  \Sum{C \in \cC}{}
  \Int{C}{}
  \bv \cdot \partial_t \bu
  - \Sum{C \in \cC}{}
  \Int{C}{}
  g \nablaperp \cdot  \bv
  +
  \Sum{S \in \cS}{}
  \Int{S}{}
  \Jump{\bv} \cdot \bGtilde=0,
\end{equation}
where $\bGtilde$ is the numerical flux. 

\begin{prop}[Adjoint conservation of the divergence]
  \label{prop:divConservation}
  Consider the numerical scheme \eqref{eq:DGDiv}. 
  If $\bGtilde$ is orthogonal to $\bn_S$, then $\nablaStar \bu$ is
  conserved by the numerical scheme.
\end{prop}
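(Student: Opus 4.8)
The plan is to mirror, line for line, the proof of \autoref{prop:curlConservation}, exchanging the roles of the normal and the tangent and dualizing $\nablaperp \leftrightarrow \nabla$. In dimension two, orthogonality of $\bGtilde$ to $\bn_S$ means that $\bGtilde$ is parallel to the tangent $\bt_S := \bn_S^\perp$, so it can be written $\bGtilde = \gtilde \, \bt_S$ for some scalar $\gtilde$; the side contribution $\Jump{\bv} \cdot \bGtilde$ then becomes $\gtilde \, \Jump{\bv \cdot \bt_S}$.

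I would then test \eqref{eq:DGDiv} with $\bv = \nabla f$ for an arbitrary $f \in \bbA_{k+1}$, which is legitimate because the complex \eqref{eq:deRhamGradCurlDiscreteNC} guarantees $\nabla f \in \Bcurl_k$. Two simplifications occur. On each cell, $g \, \nablaperp \cdot \bv = g \, \nablaperp \cdot \nabla f$ vanishes pointwise since $\nablaperp \cdot \nabla f = -\partial_y \partial_x f + \partial_x \partial_y f = 0$. On each side, $\bv \cdot \bt_S = \nabla f \cdot \bt_S$ is the tangential derivative of $f$ along $S$; because $f$ is a \emph{continuous} finite element function, its trace on $S$ is single valued, hence so is this tangential derivative, and $\Jump{\nabla f \cdot \bt_S} = 0$. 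This is precisely the dual of the step in \autoref{prop:curlConservation}, where $\Jump{\nablaperp f \cdot \bn_S}$ was, up to sign, the same tangential derivative.

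What survives of \eqref{eq:DGDiv} is $\Sum{C \in \cC}{} \Int{C}{} \nabla f \cdot \partial_t \bu = 0$ for every $f \in \bbA_{k+1}$, i.e. $\scalar{\partial_t \bu}{\nabla f}_{\Bcurl_k} = 0$. By definition of the adjoint gradient this reads $\scalar{\nablaStar (\partial_t \bu)}{f}_{\bbA_{k+1}} = 0$ for all $f$, whence $\nablaStar (\partial_t \bu) = 0$ in $\bbA_{k+1}$. It remains to commute the time derivative with $\nablaStar$: pulling $\partial_t$ out of the $L^2$ integral in the defining identity of $\nablaStar$, exactly as at the end of \autoref{prop:curlConservation}, yields $\nablaStar (\partial_t \bu) = \partial_t (\nablaStar \bu)$, and hence $\partial_t (\nablaStar \bu) = 0$.

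The only genuinely new point, and the one I expect to be the crux, is the vanishing of the face term: one must recognize that orthogonality to $\bn_S$ is tangentiality in two dimensions, and that the surviving jump is that of the \emph{tangential} derivative of a function of $\bbA_{k+1}$, which the continuity of $\bbA_{k+1}$ forces to be zero. Everything else is a formal transcription of the vorticity case, and the pointwise vanishing of both the cell and face integrands again makes the argument insensitive to the quadrature used.
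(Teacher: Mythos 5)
Your proposal is correct and is exactly the argument the paper intends: the paper does not write out a separate proof of \autoref{prop:divConservation}, stating only that it ``follows exactly the same lines'' as \autoref{prop:curlConservation}, and your dualized transcription (testing with $\bv = \nabla f$, using $\nablaperp \cdot \nabla f = 0$ in the cells and the single-valuedness of the tangential derivative $\nabla f \cdot \bn_S^{\perp}$ of the continuous function $f \in \bbA_{k+1}$ on the faces, then commuting $\partial_t$ with $\nablaStar$) is precisely that line-for-line adaptation.
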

The proof of \autoref{prop:divConservation} follows exactly the same
lines as \autoref{prop:curlConservation} and is not repeated. 

\begin{remark}[Lax-Friedrich flux with purely tangential diffusion]

  The Lax-Friedrich numerical flux \eqref{eq:LaxFriedrich} for
  \eqref{eq:uDiv} may be simplified as
  \begin{equation*}
    \bGtilde \left( \bg_L, \bu_L,\bg_R,\bu_R,\bn_S \right)
    = \dfrac{ g_L \bn_S ^\perp +  g_R \bn_S ^\perp}{2}
    + \dfrac{\lambda}{2} \left( \bu_L - \bu_R \right). 
  \end{equation*}
  The centered part of the flux, which is clearly orthogonal to $\bn_S$,
  ensures the consistency of the numerical scheme.
  Still relying on the decomposition of the diffusive part into
  its normal and tangential part, an easy way to ensure the hypothesis of
  \autoref{prop:divConservation} consists in using the following
  \emph{Lax-Friedrich flux with purely tangential diffusion}
  \begin{equation}
    \label{eq:LaxFriedrichTangential}
    \bGtilde \left( \bg_L, \bu_L,\bg_R,\bu_R,\bn_S \right)
    = \dfrac{ g_L \bn_S ^\perp +  g_R \bn_S ^\perp}{2}
    + \dfrac{\lambda \left( \mathrm{I}_d - \bn_S \bn_S ^T \right) }{2}
    \left( \bu_L - \bu_R \right). 
  \end{equation}
  Similarly to \autoref{rem:HodgeLaplacian}, we proved in
  \cite{firstVersion} that the diffusion of \eqref{eq:LaxFriedrichTangential}
  matches with a discrete version of the operator
  $\nablaperp \left( \nablaperp \cdot \right)$, which is the component of
  the vector Laplacian \eqref{eq:LaplaceVector} that preserves the divergence. 
\end{remark}

\section{Preservation of initially divergence free field for the
  induction equation}
\label{sec:LieDerivative}

In this section, we are interested in the induction equation, which
reads, in dimension $3$:
$$\partial_t \bu +  \nabla_{3d} \times \left( \bb \times \bu \right)
+ \bb \nabla_{3d} \cdot \bu = 0,$$
where $\bu$ is the unknown, $\bb$ is a given vector field, and
$\nabla_{3d}$ denotes the  three dimensional
nabla operator. We suppose that the
two vectors $\bb$ and $\bu$ have only a $x$ and a $y$ component, and
we suppose also that these components depend only on the variables $x$ and
$y$. Then 
$$
\begin{array}{r@{\, = \, }l}
\nabla_{3d} \times \left( \bb \times \bu \right)
&
\nabla_{3d} \times
\left(
\left(
\begin{array}{c}
  \bb_x \\
  \bb_y \\
  0 \\
\end{array}
\right)
\times
\left(
\begin{array}{c}
  \bu_x \\
  \bu_y \\
  0 \\
\end{array}
\right)
\right) \\
& \nabla_{3d} \times \left(
\begin{array}{c}
  0 \\
  0 \\
  \bb_x \bu_y - \bb_y \bu_x \\
\end{array}
\right) \\
& \left(
\begin{array}{c}
  \partial_y \left( \bb_x \bu_y - \bb_y \bu_x \right) \\
  - \partial_x \left( \bb_x \bu_y - \bb_y \bu_x \right) \\
   0\\
\end{array}
\right) \\
\nabla_{3d} \times \left( \bb \times \bu \right)
& \left(
\begin{array}{c}
  \nablaperp \left( \bb \cdot \bu^{\perp}  \right) \\
  0
\end{array}
\right). \\
\end{array}
$$
As a consequence, the two dimensional induction equation reads
\begin{equation}
  \label{eq:InductionEquation}
  \partial_t \bu   + \nablaperp \left( \bb \cdot \bu^{\perp}  \right)
  + \bb \nabla \cdot \bu= 0.
\end{equation}
This equation formally ensures the following advection equation on
$\nabla \cdot \bu$:
\begin{equation}
  \label{eq:ConservationDivergence}
  \partial_t \left( \nabla \cdot \bu \right)
  +  \nabla \cdot \left( 
  \left( \nabla \cdot \bu \right) \bb \right)
  = 0,
\end{equation}
which means that if $\nabla \cdot \bu$ is initially equal to $0$, it is always
equal to $0$. 
We are now interested in the discretization of
\eqref{eq:InductionEquation}.  
In \autoref{prop:divConservation}, we proved that
$\left( \nabla \right)^{\star}$ could be preserved by the discontinuous
Galerkin method provided the approximation space is $\Bcurl_k$ and the
numerical flux is tangential to the normal. This suggests to propose
 the following numerical scheme: 
\begin{equation}
  \label{eq:DiscreteInduction}
  \begin{array}{l}
    \text{find $\bu \in \Bcurl_k$ such
      that for all $\bv \in \Bcurl_k$} \\
    \qquad 
    \Sum{c \in \cC}{} \Int{c}{}
    \bv \cdot \partial_t \bu
    -
    \Sum{c \in \cC}{} \Int{c}{} \left( \bb \cdot  \bu^{\perp} \right)
    \nablaperp \cdot \bv
    +
    \Sum{S \in \cS}{} \Int{S}{} \Jump{\bv} \cdot \widetilde{\bG}
    +
    \Sum{c \in \cC}{} \Int{c}{} \bv \cdot \bb \left(
    - \nabla^\star \bu
    \right)
    = 0,
  \end{array}
\end{equation}
for which we can prove
\begin{prop}[Equation on $-\nabla^\star \bu$]
  \label{prop:EquationOnDivergence}
  If $\bu$ is solution of \eqref{eq:DiscreteInduction}, and if
  the numerical flux is orthogonal to $\bn_S$, then $-\nabla^\star \bu$ is
  solution of the continuous Galerkin discretization for the advection
  equation on the divergence
  \eqref{eq:ConservationDivergence}
  \begin{equation}
    \label{eq:AdvectionCG}
    \text{Find }D \in \bbA_{k+1} \quad \forall f \in \bbA_{k+1}
    \qquad 
    \Sum{c \in \cC}{} \Int{c}{}
    f \partial_t D
    - \Sum{c \in \cC}{}
    \Int{c}{} D \bb \cdot \nabla f = 0.
  \end{equation}
\end{prop}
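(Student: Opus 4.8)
The plan is to follow the structure of the proof of Proposition~\ref{prop:curlConservation}, adapting the test-function trick to the new setting. The core idea is to test the discrete induction scheme \eqref{eq:DiscreteInduction} against a special choice of test vector, namely $\bv = \nablaperp f$ for an arbitrary $f \in \bbA_{k+1}$, and then to identify the resulting identity with the continuous Galerkin scheme \eqref{eq:AdvectionCG} applied to $D := -\nabla^\star \bu$.

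First I would substitute $\bv = \nablaperp f$ into \eqref{eq:DiscreteInduction} and simplify each of the four terms. As in Proposition~\ref{prop:curlConservation}, the choice $\bv = \nablaperp f$ with $f \in \bbA_{k+1}$ kills the face term: since $\bGtilde$ is orthogonal to $\bn_S$ it may be written $\bGtilde = \gtilde \, \bn_S^\perp$, and $\Jump{\nablaperp f}$ vanishes because $f$ is continuous, so the tangential jump contributes nothing. (More precisely, one uses $\Jump{\bv \cdot \bn_S^\perp} = \Jump{\nablaperp f \cdot \bn_S^\perp} = \Jump{\nabla f \cdot \bn_S}$, which vanishes by continuity of $f$.) Likewise the second cell term, which contains $\nablaperp \cdot \bv = \nablaperp \cdot \nablaperp f = \Delta f$, does \emph{not} automatically vanish here, so one must keep careful track of it. The remaining two terms are the time-derivative term and the $\bb$-coupling term.

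The main obstacle will be to correctly reassemble the surviving terms into the form \eqref{eq:AdvectionCG}. The natural route is to recognize that testing $\partial_t \bu$ against $\nablaperp f$ produces $\scalar{\partial_t \bu}{\nablaperp f}_{\Bcurl_k}$, and by the definition of $\nablaStar$ together with the commutation of $\nablaStar$ with $\partial_t$ (exactly as established at the end of the proof of Proposition~\ref{prop:curlConservation}), this equals $\scalar{\partial_t(\nablaStar \bu)}{f}_{\bbA_{k+1}} = -\scalar{\partial_t D}{f}_{\bbA_{k+1}}$, giving the first term of \eqref{eq:AdvectionCG}. The delicate part is the advective term: one must show that the combination of the $(\bb\cdot\bu^\perp)\,\nablaperp\cdot\bv$ cell integral and the $\bv\cdot\bb(-\nabla^\star\bu)$ cell integral, evaluated at $\bv=\nablaperp f$, collapses to $-\sum_c \int_c D\, \bb\cdot\nabla f$. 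Here I would use that $\nablaperp f \cdot \bb = -\nabla f \cdot \bb^\perp$ and, crucially, that $\nabla^\star$ is \emph{adjoint} to $\nablaperp$ in the sense linking $\Bcurl_k$ to $\bbA_{k+1}$; the term $\int_c (\bb\cdot\bu^\perp)\Delta f$ should be matched against $\int_c (\nabla^\star \bu)\,\bb\cdot\nabla f$ by an integration-by-parts/adjointness argument analogous to the Stokes-formula manipulation carried out in the proof of Proposition~\ref{prop:projectionCk}.

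Concretely, the final step is to verify the algebraic identity
\begin{equation*}
  -\sum_{c\in\cC}\int_c (\bb\cdot\bu^\perp)\,\nablaperp\cdot(\nablaperp f)
  + \sum_{c\in\cC}\int_c (\nablaperp f)\cdot\bb\,(-\nabla^\star\bu)
  = -\sum_{c\in\cC}\int_c (-\nabla^\star\bu)\,\bb\cdot\nabla f,
\end{equation*}
after which collecting all terms yields exactly \eqref{eq:AdvectionCG} with $D=-\nabla^\star\bu$, as $f\in\bbA_{k+1}$ was arbitrary. I expect the bookkeeping of signs and the careful invocation of the adjoint relation defining $\nabla^\star$ to be the genuinely load-bearing parts; everything else parallels Proposition~\ref{prop:curlConservation} line for line. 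One subtlety worth double-checking is whether the scheme intends $\nabla^\star$ or its negation as the discrete divergence, since the statement preserves $-\nabla^\star\bu$, matching the remark that $-\nabla^\star$ approximates the divergence on $\Bcurl_k$.
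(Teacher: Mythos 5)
Your proposal picks the wrong test function, and this breaks the argument at every subsequent step. The paper's proof tests \eqref{eq:DiscreteInduction} with $\bv = \nabla f$, not $\bv = \nablaperp f$. The switch matters because the quantity to be extracted is $\nabla^\star \bu$, and by definition $\nabla^\star$ is adjoint to $\nabla$: $\scalar{\nabla^\star \bu}{f}_{\bbA_{k+1}} = \scalar{\bu}{\nabla f}_{\Bcurl_k}$. Pairing $\partial_t \bu$ with $\nablaperp f$, as you do, does not produce $\scalar{\partial_t \left( \nabla^\star \bu \right)}{f}_{\bbA_{k+1}}$; that identity requires the test vector $\nabla f$. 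Moreover, your claim that the face term dies is false: you correctly compute $\Jump{\nablaperp f \cdot \bn_S ^{\perp}} = \Jump{\nabla f \cdot \bn_S}$, but this is the jump of the \emph{normal} derivative of $f$, which does not vanish for a merely continuous finite element function; only the tangential component $\nabla f \cdot \bn_S ^{\perp}$ is continuous across a face. Finally, you are left with a cell term containing $\Delta f$ that you hope to cancel against the $\bb$-coupling term by an integration-by-parts argument; no such cancellation exists, since the two terms carry different data ($\bb \cdot \bu^{\perp}$ versus $\nabla^\star \bu$, and $\nablaperp f \cdot \bb = - \nabla f \cdot \bb^{\perp}$ introduces $\bb^{\perp}$ where the target \eqref{eq:AdvectionCG} has $\bb$), and the paper never needs one.

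With the correct choice $\bv = \nabla f$ everything collapses immediately: $\nablaperp \cdot \left( \nabla f \right) = 0$ kills the curl cell term pointwise; the flux being orthogonal to $\bn_S$ gives $\Jump{\nabla f} \cdot \bGtilde = \gtilde \, \Jump{\nabla f \cdot \bn_S ^{\perp}} = 0$ by continuity of $f$; the time term becomes $\scalar{f}{\partial_t \nabla^\star \bu}_{\bbA_{k+1}}$ by adjointness and the commutation of $\nabla^\star$ with $\partial_t$ (proved as in \autoref{prop:curlConservation}); and the coupling term $\Sum{c \in \cC}{} \Int{c}{} \nabla f \cdot \bb \left( - \nabla^\star \bu \right)$ is already, verbatim, the advection term of \eqref{eq:AdvectionCG} with $D = -\nabla^\star \bu$. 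No Stokes formula and no handling of a Laplacian is required. Your error appears to come from copying the test function of \autoref{prop:curlConservation} without dualizing it: that proposition lives in $\Bdiv_k$ with a flux parallel to $\bn_S$ and extracts $\nablaperpStar$, so it tests with $\nablaperp f$; here the scheme lives in $\Bcurl_k$ with a flux orthogonal to $\bn_S$ and extracts $\nablaStar$, so the test function must be $\nabla f$.
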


\begin{proof}
  We consider \eqref{eq:DiscreteInduction}, which is tested with
  $\bv  = \nabla f$, where $f \in \bbA_{k+1}$. Then
  $$\nablaperp \cdot \bv = \nablaperp \cdot \left( \nabla f \right) = 0.$$
  As $\widetilde{\bG}$ is orthogonal to $\bn_S$, it may be rewritten
  $\tilde{g} \bn_S ^{\perp}$. This leads to
  $$
  \Jump{\bv} \cdot \widetilde{\bG}
  =
  \Jump{\bv} \cdot \tilde{g} \bn_S ^{\perp}
  =
  \Jump{\bv \cdot \bn_S ^\perp} \tilde{g},
  $$
  and if $f \in \bbA_{k+1}$, then
  $\Jump{\nabla f \cdot \bn_S ^{\perp}} = 0$.
  Therefore \eqref{eq:DiscreteInduction} tested with $\nabla f $ for
  $f \in \bbA_{k+1}$ gives
  \begin{equation}
    \label{eq:DivTemp}
    \forall f \in \bbA_{k+1} \qquad
    \Sum{c \in \cC}{} \Int{c}{}
    \nabla f \cdot \partial_t \bu
    +
    \Sum{c \in \cC}{} \Int{c}{} \nabla f  \cdot \bb \left(
    - \nabla^\star \bu
    \right)
    = 0.
  \end{equation}
  The time derivative term may be transformed as
  (the commutation between $\nabla^\star$ and $\partial_t$ can be proven
  as in the proof of \autoref{prop:curlConservation}):
  $$
  \begin{array}{r@{\, = \, }l}
    \Sum{c \in \cC}{} \Int{c}{}
  \nabla f \cdot \partial_t \bu
  & \scalar{\nabla f }{\partial_t \bu }_{\Bcurl_k} \\
  & \scalar{ f }{\nabla^\star \left( \partial_t \bu \right) }_{\bbA_{k+1}} \\
  & \scalar{ f }{\partial_t \nabla^\star \left( \bu \right) }_{\bbA_{k+1}} \\
  \Sum{c \in \cC}{} \Int{c}{}
  \nabla f \cdot \partial_t \bu
  & - \scalar{ f }
         {\partial_t \left( - \nabla^\star \left( \bu \right) \right)
         }_{\bbA_{k+1}}. \\
  \end{array}
  $$
  Equation \eqref{eq:DivTemp} becomes finally
  $$
  \forall f \in \bbA_{k+1} \qquad
  \Sum{c \in \cC}{} \Int{c}{}
  f \partial_t \left( - \nabla^\star \left( \bu \right) \right)
  -
  \Sum{c \in \cC}{} \Int{c}{} \nabla f  \cdot \bb \left(
  - \nabla^\star \bu
  \right)
  = 0,
  $$
  which means that $\left( - \nabla^\star  \bu \right)$ is
  solution of \eqref{eq:AdvectionCG}.
\end{proof}
Note that $- \nabla^\star \bu$ is solution of the
numerical scheme \eqref{eq:AdvectionCG}, which is known
to be unstable with explicit time stepping, as it leads to
  central-difference type approximations of differential
  operators \cite{brooks1982streamline}. However, it will be used in practical applications
with initial condition
$\nabla^\star \bu (t=0) = 0$, so that this instability should not raise any
problem provided the initialization is carefully performed.

\begin{prop}[Exact preservation of $\nabla^\star \bu = 0$]
  We consider equation \eqref{eq:InductionEquation}. Suppose
  that $\bu(t=0) = \bu^0$ derives from a potential
  $f^0$: $\bu^0 = \nablaperp f^0$, 
  and we denote by $f_h ^0$ the projection of $f^0$ in $\bbC_k$
  as defined in \autoref{def:projectionCk}. Then
  $\bu_h ^0 := -\left( \dnablaperp \cdot \right)^{\star} f_h ^0$ is such that
  $\nabla^\star \bu_h ^0 = 0$, and under hypothesis of
  \autoref{prop:EquationOnDivergence}, the constraint
  $\nabla^\star \bu_h = 0$ is preserved for all time by the numerical
  scheme~\eqref{eq:DiscreteInduction}.
\end{prop}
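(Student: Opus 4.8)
The plan is to treat the statement as two logically independent assertions: first, the \emph{initialization} property $\nabla^\star \bu_h^0 = 0$, and second, the \emph{propagation} property that this vanishing is inherited for all time. The initialization part I would argue entirely at the level of adjoint operators, never unfolding the projection $\pi^{\bbC_k}$ that defines $f_h^0$. For an arbitrary $g \in \bbA_{k+1}$, chaining the definition of $\nabla^\star$, the definition $\bu_h^0 = -\left(\dnablaperp \cdot\right)^\star f_h^0$, and the definition of $\left(\dnablaperp \cdot\right)^\star$ gives
$$\scalar{\nabla^\star \bu_h^0}{g}_{\bbA_{k+1}} = \scalar{\bu_h^0}{\nabla g}_{\Bcurl_k} = -\scalar{f_h^0}{\dnablaperp \cdot \left(\nabla g\right)}_{\bbC_k},$$
so everything collapses onto the \emph{complex property} $\dnablaperp \cdot \circ \nabla = 0$ underlying \eqref{eq:deRhamGradCurlDiscreteNC} (note $\nabla g \in \Bcurl_k$, so the distributional operator may legitimately be applied to it).

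This complex identity is the one genuine computation I would carry out. On each cell the strong contribution is $\nablaperp \cdot \left(\nabla g\right) = -\partial_y\partial_x g + \partial_x\partial_y g = 0$ by symmetry of second derivatives, while on each side the jump contribution $\Jump{\left(\nabla g\right)^\perp}\cdot \bn_S = \Jump{\nablaperp g \cdot \bn_S}$ is the jump of the \emph{tangential} derivative of $g$, which vanishes precisely because $g$ lives in the continuous space $\bbA_{k+1}$. Hence $\dnablaperp \cdot \nabla g = 0$, the displayed pairing is zero for every $g$, and choosing $g = \nabla^\star \bu_h^0$ forces $\norme{\nabla^\star \bu_h^0}^2 = 0$, i.e. $\nabla^\star \bu_h^0 = 0$. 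I would emphasize that the projection plays no role here: the same reasoning shows $\nabla^\star \circ \left(\dnablaperp \cdot\right)^\star = 0$ as an operator identity — it is simply the adjoint of the composition $\dnablaperp \cdot \circ \nabla$ — so any field in the range of $\left(\dnablaperp \cdot\right)^\star$ is divergence free in the adjoint sense; the projection $\pi^{\bbC_k}$ is needed only to make $\bu_h^0$ an accurate approximation of $\bu^0 = \nablaperp f^0$.

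For propagation I would invoke \autoref{prop:EquationOnDivergence} directly. Since the numerical flux is taken orthogonal to $\bn_S$, that proposition tells us $D := -\nabla^\star \bu_h$ solves the continuous Galerkin scheme \eqref{eq:AdvectionCG}. Reading \eqref{eq:AdvectionCG} as an ODE system for the coordinates of $D$ in a basis of $\bbA_{k+1}$, it is linear with the invertible (symmetric positive definite) mass matrix associated to $\scalar{\cdot}{\cdot}_{\bbA_{k+1}}$. The initialization result supplies the datum $D(t=0) = -\nabla^\star \bu_h^0 = 0$; since $D \equiv 0$ is plainly a solution, uniqueness for linear initial value problems gives $D(t) = 0$ for all $t$, i.e. $\nabla^\star \bu_h = 0$ is preserved.

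I expect the only delicate point to be the face term in the complex identity: one must keep in mind that $\dnablaperp \cdot$ is the \emph{distributional} operator, so the side contribution is honestly a jump of a tangential derivative, and the argument hinges on $\bbA_{k+1}$ enforcing continuity of $g$ while \emph{not} enforcing continuity of the full gradient. Everything else — the adjoint bookkeeping in the first part and the linear-ODE uniqueness in the second — is routine.
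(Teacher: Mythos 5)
Your proof is correct, and at bottom it runs on the same engine as the paper's: the identity $\nabla^{\star}\circ\left(\dnablaperp\cdot\right)^{\star}=0$, which is the adjoint of the complex property $\left(\dnablaperp\cdot\right)\circ\nabla=0$, followed by an appeal to \autoref{prop:EquationOnDivergence}. The difference lies in how much you prove yourself. For the initialization, the paper argues abstractly with finite-dimensional kernel/range relations --- $\left(\dnablaperp\cdot\right)^{\star}f_h^0\in\left(\ker\left(\dnablaperp\cdot\right)\right)^{\perp}\subset\left(\Range\nabla\right)^{\perp}=\ker\nabla^{\star}$ --- and \emph{quotes} the inclusion $\Range\nabla\subset\ker\left(\dnablaperp\cdot\right)$ as a property of the complex \eqref{eq:deRhamGradCurlDiscreteNC} established in \cite{PerrierExact2024}; you instead unfold the same duality into an explicit pairing chain and then prove the complex identity directly (equality of mixed partials on cells, vanishing jump of the tangential derivative of $g\in\bbA_{k+1}$ on sides), which makes your argument self-contained where the paper leans on the external reference. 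Your remark that the projection $\pi^{\bbC_k}$ plays no role in this algebraic fact is consistent with the paper's proof, which likewise uses only $f_h^0\in\bbC_k$. For the propagation, the paper merely states that \eqref{eq:AdvectionCG} preserves the zero initial datum; your reduction to a linear ODE system with a symmetric positive definite mass matrix, plus uniqueness for linear initial value problems, spells out what the paper leaves implicit. The one structural fact your computation does not itself establish --- and you correctly flag it --- is that $\nabla g\in\Bcurl_k$ for $g\in\bbA_{k+1}$, i.e.\ that the gradient of the continuous space actually lands in the discrete vector space; like the complex inclusion in the paper's version, this is inherited from the design of the spaces in \eqref{eq:deRhamGradCurlDiscreteNC}.
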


\begin{proof}
  The beginning of the proof relies only on relations between the kernel and
  range of an operator and its adjoint. Note that we are working here in
  finite dimension, so that these relations do not include closures of
  range or kernel. 
 
  If $f_h ^0 \in \bbC_k$, then
  $\left( \dnablaperp \cdot \right)^{\star} f_h ^0 \in \left( \ker
  \left( \dnablaperp \cdot
  \right) \right)^\perp$. But as $\Range \nabla \subset
  \ker \left( \dnablaperp \cdot
  \right)$, we also have
  $\left( \dnablaperp \cdot \right)^{\star} f_h ^0 \in
  \left( \Range \nabla \right)^\perp$. As
  $\left( \Range \nabla \right)^\perp = \ker \nabla^\star$, we have
  $\left( \dnablaperp \cdot \right)^{\star} f_h ^0 \in  \ker \nabla^\star$, and so
  $\nabla^\star \bu_h ^0 = 0$.

  Then by \autoref{prop:EquationOnDivergence},
  $\nabla^\star \bu_h$ ensures the numerical scheme \eqref{eq:AdvectionCG} which
  preserves $\nabla^\star \bu_h = 0$ for all time, which ends the proof.
\end{proof}

\begin{remark}[Divergence free initialization]
  \label{rem:DivergenceInit}
  We provide some details about how the initial condition is computed.
  Usually, the computation of the initial condition $\bu_h ^0$ is computed by
  projection of the initial condition $\bu^0$ on the finite element space. This
  leads to the following system to solve
  $$M \bu_h ^0 = \bRHS,$$
  where $M$ is the mass matrix of the finite element space of the velocity space:
  if $\bv$ denotes a basis of this finite element basis, the mass matrix is
  \begin{equation}
    \label{eq:MassMatrix}
    M_{i,j} = \Int{\Omega}{} \bv_i \cdot \bv_j,
  \end{equation}
  and $\bRHS$ is the right hand side, equal to
  \begin{equation}
    \label{eq:RHSInit}
    \bRHS_i = \Int{\Omega}{} \bu^0 \cdot \bv_i.
  \end{equation}
  Suppose now that the initial condition derives from a potential vector
  $f$: $\bu^0 = \nablaperp f^0$. Then at the discrete level, we wish to have
  $\bu_h ^0 = - \left( \dnablaperp \cdot \right)^{\star} f_h ^0$, namely
  $$\forall \bv \in \Bcurl_k
  \qquad
  \scalar{\bu_h ^0}{\bv}_{\Bcurl_k}
  = - \scalar{\left( \dnablaperp \cdot \right)^{\star}
    f_h ^0}{\bv}_{\Bcurl_k}
  =
  - \scalar{f_h ^0}{\dnablaperp \cdot \bv}_{\bbC_k}.$$
  This gives
  $$\forall \bv \in \Bcurl_k
  \qquad
  \scalar{\bu_h ^0}{\bv}_{\Bcurl_k}
  =
  - \Sum{c \in \cC}{} \Int{c}{} 
  f_h ^0 \nablaperp \cdot  \bv
  + \Sum{S \in \cS}{} \Int{S}{} 
  f_h ^0 \Jump{\bv \cdot \bn_S ^{\perp}}.
  $$
  Therefore, the matrix of the
  system to solve is still \eqref{eq:MassMatrix}, but the right hand side
  is no more \eqref{eq:RHSInit}, but
  $$\bRHS_i
  =
  - \Sum{c \in \cC}{} \Int{c}{} 
  f_h ^0 \nablaperp \cdot  \bv_i
  + \Sum{S \in \cS}{} \Int{S}{} 
  f_h ^0 \Jump{\bv_i \cdot \bn_S ^{\perp}}.
  $$
\end{remark}

\section{Numerical results}
\label{sec:numericalResults}

For all the numerical tests, the computational domain is the periodic
square domain $[0,1]^2$. We will consider three types of
mesh: Cartesian, unstructured quadrangular and unstructured triangular.
The meshes on which the conservation tests will be performed are represented
in \autoref{fig:Meshes}.

\begin{figure}
  \begin{center}
    \begin{tabular}{c@{\qquad}c}
\begin{tikzpicture}[scale=5,line width=0.8,color=blue!70]
    \coordinate (A1) at (0,0) {};
    \coordinate (A2) at (1,0) {};
    \coordinate (A3) at (1,1) {};
    \coordinate (A4) at (0,1) {};
    \coordinate (A5) at (0.09999999999975993,0) {};
    \coordinate (A6) at (0.1999999999995199,0) {};
    \coordinate (A7) at (0.2999999999993242,0) {};
    \coordinate (A8) at (0.3999999999991729,0) {};
    \coordinate (A9) at (0.4999999999990216,0) {};
    \coordinate (A10) at (0.599999999999148,0) {};
    \coordinate (A11) at (0.6999999999992742,0) {};
    \coordinate (A12) at (0.7999999999994699,0) {};
    \coordinate (A13) at (0.899999999999735,0) {};
    \coordinate (A14) at (1,0.09999999999975993) {};
    \coordinate (A15) at (1,0.1999999999995199) {};
    \coordinate (A16) at (1,0.2999999999993242) {};
    \coordinate (A17) at (1,0.3999999999991729) {};
    \coordinate (A18) at (1,0.4999999999990216) {};
    \coordinate (A19) at (1,0.599999999999148) {};
    \coordinate (A20) at (1,0.6999999999992742) {};
    \coordinate (A21) at (1,0.7999999999994699) {};
    \coordinate (A22) at (1,0.899999999999735) {};
    \coordinate (A23) at (0.9000000000001388,1) {};
    \coordinate (A24) at (0.8000000000002776,1) {};
    \coordinate (A25) at (0.7000000000005551,1) {};
    \coordinate (A26) at (0.6000000000009714,1) {};
    \coordinate (A27) at (0.5000000000013878,1) {};
    \coordinate (A28) at (0.4000000000012489,1) {};
    \coordinate (A29) at (0.3000000000011102,1) {};
    \coordinate (A30) at (0.2000000000008326,1) {};
    \coordinate (A31) at (0.1000000000004163,1) {};
    \coordinate (A32) at (0,0.9000000000001388) {};
    \coordinate (A33) at (0,0.8000000000002776) {};
    \coordinate (A34) at (0,0.7000000000005551) {};
    \coordinate (A35) at (0,0.6000000000009714) {};
    \coordinate (A36) at (0,0.5000000000013878) {};
    \coordinate (A37) at (0,0.4000000000012489) {};
    \coordinate (A38) at (0,0.3000000000011102) {};
    \coordinate (A39) at (0,0.2000000000008326) {};
    \coordinate (A40) at (0,0.1000000000004163) {};
    \coordinate (A41) at (0.09999999999982558,0.1000000000003507) {};
    \coordinate (A42) at (0.09999999999989122,0.2000000000007014) {};
    \coordinate (A43) at (0.09999999999995685,0.3000000000009316) {};
    \coordinate (A44) at (0.1000000000000225,0.4000000000010413) {};
    \coordinate (A45) at (0.1000000000000881,0.5000000000011512) {};
    \coordinate (A46) at (0.1000000000001538,0.6000000000007891) {};
    \coordinate (A47) at (0.1000000000002194,0.7000000000004268) {};
    \coordinate (A48) at (0.100000000000285,0.8000000000001967) {};
    \coordinate (A49) at (0.1000000000003507,0.9000000000000985) {};
    \coordinate (A50) at (0.1999999999996512,0.100000000000285) {};
    \coordinate (A51) at (0.1999999999997824,0.2000000000005701) {};
    \coordinate (A52) at (0.1999999999999137,0.3000000000007529) {};
    \coordinate (A53) at (0.2000000000000449,0.4000000000008337) {};
    \coordinate (A54) at (0.2000000000001763,0.5000000000009144) {};
    \coordinate (A55) at (0.2000000000003075,0.6000000000006066) {};
    \coordinate (A56) at (0.2000000000004388,0.7000000000002989) {};
    \coordinate (A57) at (0.2000000000005701,0.8000000000001161) {};
    \coordinate (A58) at (0.2000000000007013,0.9000000000000579) {};
    \coordinate (A59) at (0.2999999999995027,0.1000000000002194) {};
    \coordinate (A60) at (0.2999999999996814,0.2000000000004388) {};
    \coordinate (A61) at (0.2999999999998599,0.3000000000005744) {};
    \coordinate (A62) at (0.3000000000000386,0.4000000000006261) {};
    \coordinate (A63) at (0.3000000000002172,0.5000000000006779) {};
    \coordinate (A64) at (0.3000000000003958,0.6000000000004244) {};
    \coordinate (A65) at (0.3000000000005744,0.7000000000001708) {};
    \coordinate (A66) at (0.3000000000007529,0.8000000000000355) {};
    \coordinate (A67) at (0.3000000000009316,0.9000000000000177) {};
    \coordinate (A68) at (0.3999999999993805,0.1000000000001538) {};
    \coordinate (A69) at (0.399999999999588,0.2000000000003075) {};
    \coordinate (A70) at (0.3999999999997957,0.3000000000003958) {};
    \coordinate (A71) at (0.4000000000000034,0.4000000000004186) {};
    \coordinate (A72) at (0.4000000000002109,0.5000000000004412) {};
    \coordinate (A73) at (0.4000000000004186,0.600000000000242) {};
    \coordinate (A74) at (0.4000000000006261,0.7000000000000427) {};
    \coordinate (A75) at (0.4000000000008337,0.7999999999999546) {};
    \coordinate (A76) at (0.4000000000010413,0.8999999999999773) {};
    \coordinate (A77) at (0.4999999999992583,0.1000000000000881) {};
    \coordinate (A78) at (0.4999999999994948,0.2000000000001763) {};
    \coordinate (A79) at (0.4999999999997315,0.3000000000002172) {};
    \coordinate (A80) at (0.4999999999999681,0.4000000000002109) {};
    \coordinate (A81) at (0.5000000000002046,0.5000000000002048) {};
    \coordinate (A82) at (0.5000000000004412,0.6000000000000596) {};
    \coordinate (A83) at (0.5000000000006779,0.6999999999999148) {};
    \coordinate (A84) at (0.5000000000009146,0.7999999999998739) {};
    \coordinate (A85) at (0.5000000000011511,0.8999999999999369) {};
    \coordinate (A86) at (0.5999999999993305,0.1000000000000225) {};
    \coordinate (A87) at (0.5999999999995126,0.2000000000000449) {};
    \coordinate (A88) at (0.599999999999695,0.3000000000000386) {};
    \coordinate (A89) at (0.5999999999998774,0.4000000000000034) {};
    \coordinate (A90) at (0.6000000000000596,0.4999999999999681) {};
    \coordinate (A91) at (0.600000000000242,0.5999999999998774) {};
    \coordinate (A92) at (0.6000000000004245,0.6999999999997865) {};
    \coordinate (A93) at (0.6000000000006066,0.799999999999793) {};
    \coordinate (A94) at (0.6000000000007891,0.8999999999998967) {};
    \coordinate (A95) at (0.6999999999994023,0.09999999999995685) {};
    \coordinate (A96) at (0.6999999999995304,0.1999999999999137) {};
    \coordinate (A97) at (0.6999999999996586,0.29999999999986) {};
    \coordinate (A98) at (0.6999999999997865,0.3999999999997957) {};
    \coordinate (A99) at (0.6999999999999146,0.4999999999997314) {};
    \coordinate (A100) at (0.7000000000000427,0.5999999999996949) {};
    \coordinate (A101) at (0.7000000000001708,0.6999999999996586) {};
    \coordinate (A102) at (0.700000000000299,0.7999999999997123) {};
    \coordinate (A103) at (0.7000000000004269,0.8999999999998562) {};
    \coordinate (A104) at (0.7999999999995507,0.09999999999989119) {};
    \coordinate (A105) at (0.7999999999996313,0.1999999999997824) {};
    \coordinate (A106) at (0.7999999999997122,0.2999999999996814) {};
    \coordinate (A107) at (0.799999999999793,0.3999999999995881) {};
    \coordinate (A108) at (0.7999999999998739,0.4999999999994949) {};
    \coordinate (A109) at (0.7999999999999546,0.5999999999995128) {};
    \coordinate (A110) at (0.8000000000000353,0.6999999999995303) {};
    \coordinate (A111) at (0.800000000000116,0.7999999999996312) {};
    \coordinate (A112) at (0.8000000000001967,0.8999999999998158) {};
    \coordinate (A113) at (0.8999999999997754,0.09999999999982558) {};
    \coordinate (A114) at (0.8999999999998158,0.1999999999996512) {};
    \coordinate (A115) at (0.8999999999998562,0.2999999999995028) {};
    \coordinate (A116) at (0.8999999999998964,0.3999999999993805) {};
    \coordinate (A117) at (0.8999999999999369,0.4999999999992581) {};
    \coordinate (A118) at (0.8999999999999775,0.5999999999993303) {};
    \coordinate (A119) at (0.9000000000000177,0.6999999999994022) {};
    \coordinate (A120) at (0.9000000000000579,0.7999999999995507) {};
    \coordinate (A121) at (0.9000000000000985,0.8999999999997754) {};
    \draw (A1) -- (A5) -- (A41) -- (A40) -- cycle;
    \draw (A40) -- (A41) -- (A42) -- (A39) -- cycle;
    \draw (A39) -- (A42) -- (A43) -- (A38) -- cycle;
    \draw (A38) -- (A43) -- (A44) -- (A37) -- cycle;
    \draw (A37) -- (A44) -- (A45) -- (A36) -- cycle;
    \draw (A36) -- (A45) -- (A46) -- (A35) -- cycle;
    \draw (A35) -- (A46) -- (A47) -- (A34) -- cycle;
    \draw (A34) -- (A47) -- (A48) -- (A33) -- cycle;
    \draw (A33) -- (A48) -- (A49) -- (A32) -- cycle;
    \draw (A32) -- (A49) -- (A31) -- (A4) -- cycle;
    \draw (A5) -- (A6) -- (A50) -- (A41) -- cycle;
    \draw (A41) -- (A50) -- (A51) -- (A42) -- cycle;
    \draw (A42) -- (A51) -- (A52) -- (A43) -- cycle;
    \draw (A43) -- (A52) -- (A53) -- (A44) -- cycle;
    \draw (A44) -- (A53) -- (A54) -- (A45) -- cycle;
    \draw (A45) -- (A54) -- (A55) -- (A46) -- cycle;
    \draw (A46) -- (A55) -- (A56) -- (A47) -- cycle;
    \draw (A47) -- (A56) -- (A57) -- (A48) -- cycle;
    \draw (A48) -- (A57) -- (A58) -- (A49) -- cycle;
    \draw (A49) -- (A58) -- (A30) -- (A31) -- cycle;
    \draw (A6) -- (A7) -- (A59) -- (A50) -- cycle;
    \draw (A50) -- (A59) -- (A60) -- (A51) -- cycle;
    \draw (A51) -- (A60) -- (A61) -- (A52) -- cycle;
    \draw (A52) -- (A61) -- (A62) -- (A53) -- cycle;
    \draw (A53) -- (A62) -- (A63) -- (A54) -- cycle;
    \draw (A54) -- (A63) -- (A64) -- (A55) -- cycle;
    \draw (A55) -- (A64) -- (A65) -- (A56) -- cycle;
    \draw (A56) -- (A65) -- (A66) -- (A57) -- cycle;
    \draw (A57) -- (A66) -- (A67) -- (A58) -- cycle;
    \draw (A58) -- (A67) -- (A29) -- (A30) -- cycle;
    \draw (A7) -- (A8) -- (A68) -- (A59) -- cycle;
    \draw (A59) -- (A68) -- (A69) -- (A60) -- cycle;
    \draw (A60) -- (A69) -- (A70) -- (A61) -- cycle;
    \draw (A61) -- (A70) -- (A71) -- (A62) -- cycle;
    \draw (A62) -- (A71) -- (A72) -- (A63) -- cycle;
    \draw (A63) -- (A72) -- (A73) -- (A64) -- cycle;
    \draw (A64) -- (A73) -- (A74) -- (A65) -- cycle;
    \draw (A65) -- (A74) -- (A75) -- (A66) -- cycle;
    \draw (A66) -- (A75) -- (A76) -- (A67) -- cycle;
    \draw (A67) -- (A76) -- (A28) -- (A29) -- cycle;
    \draw (A8) -- (A9) -- (A77) -- (A68) -- cycle;
    \draw (A68) -- (A77) -- (A78) -- (A69) -- cycle;
    \draw (A69) -- (A78) -- (A79) -- (A70) -- cycle;
    \draw (A70) -- (A79) -- (A80) -- (A71) -- cycle;
    \draw (A71) -- (A80) -- (A81) -- (A72) -- cycle;
    \draw (A72) -- (A81) -- (A82) -- (A73) -- cycle;
    \draw (A73) -- (A82) -- (A83) -- (A74) -- cycle;
    \draw (A74) -- (A83) -- (A84) -- (A75) -- cycle;
    \draw (A75) -- (A84) -- (A85) -- (A76) -- cycle;
    \draw (A76) -- (A85) -- (A27) -- (A28) -- cycle;
    \draw (A9) -- (A10) -- (A86) -- (A77) -- cycle;
    \draw (A77) -- (A86) -- (A87) -- (A78) -- cycle;
    \draw (A78) -- (A87) -- (A88) -- (A79) -- cycle;
    \draw (A79) -- (A88) -- (A89) -- (A80) -- cycle;
    \draw (A80) -- (A89) -- (A90) -- (A81) -- cycle;
    \draw (A81) -- (A90) -- (A91) -- (A82) -- cycle;
    \draw (A82) -- (A91) -- (A92) -- (A83) -- cycle;
    \draw (A83) -- (A92) -- (A93) -- (A84) -- cycle;
    \draw (A84) -- (A93) -- (A94) -- (A85) -- cycle;
    \draw (A85) -- (A94) -- (A26) -- (A27) -- cycle;
    \draw (A10) -- (A11) -- (A95) -- (A86) -- cycle;
    \draw (A86) -- (A95) -- (A96) -- (A87) -- cycle;
    \draw (A87) -- (A96) -- (A97) -- (A88) -- cycle;
    \draw (A88) -- (A97) -- (A98) -- (A89) -- cycle;
    \draw (A89) -- (A98) -- (A99) -- (A90) -- cycle;
    \draw (A90) -- (A99) -- (A100) -- (A91) -- cycle;
    \draw (A91) -- (A100) -- (A101) -- (A92) -- cycle;
    \draw (A92) -- (A101) -- (A102) -- (A93) -- cycle;
    \draw (A93) -- (A102) -- (A103) -- (A94) -- cycle;
    \draw (A94) -- (A103) -- (A25) -- (A26) -- cycle;
    \draw (A11) -- (A12) -- (A104) -- (A95) -- cycle;
    \draw (A95) -- (A104) -- (A105) -- (A96) -- cycle;
    \draw (A96) -- (A105) -- (A106) -- (A97) -- cycle;
    \draw (A97) -- (A106) -- (A107) -- (A98) -- cycle;
    \draw (A98) -- (A107) -- (A108) -- (A99) -- cycle;
    \draw (A99) -- (A108) -- (A109) -- (A100) -- cycle;
    \draw (A100) -- (A109) -- (A110) -- (A101) -- cycle;
    \draw (A101) -- (A110) -- (A111) -- (A102) -- cycle;
    \draw (A102) -- (A111) -- (A112) -- (A103) -- cycle;
    \draw (A103) -- (A112) -- (A24) -- (A25) -- cycle;
    \draw (A12) -- (A13) -- (A113) -- (A104) -- cycle;
    \draw (A104) -- (A113) -- (A114) -- (A105) -- cycle;
    \draw (A105) -- (A114) -- (A115) -- (A106) -- cycle;
    \draw (A106) -- (A115) -- (A116) -- (A107) -- cycle;
    \draw (A107) -- (A116) -- (A117) -- (A108) -- cycle;
    \draw (A108) -- (A117) -- (A118) -- (A109) -- cycle;
    \draw (A109) -- (A118) -- (A119) -- (A110) -- cycle;
    \draw (A110) -- (A119) -- (A120) -- (A111) -- cycle;
    \draw (A111) -- (A120) -- (A121) -- (A112) -- cycle;
    \draw (A112) -- (A121) -- (A23) -- (A24) -- cycle;
    \draw (A13) -- (A2) -- (A14) -- (A113) -- cycle;
    \draw (A113) -- (A14) -- (A15) -- (A114) -- cycle;
    \draw (A114) -- (A15) -- (A16) -- (A115) -- cycle;
    \draw (A115) -- (A16) -- (A17) -- (A116) -- cycle;
    \draw (A116) -- (A17) -- (A18) -- (A117) -- cycle;
    \draw (A117) -- (A18) -- (A19) -- (A118) -- cycle;
    \draw (A118) -- (A19) -- (A20) -- (A119) -- cycle;
    \draw (A119) -- (A20) -- (A21) -- (A120) -- cycle;
    \draw (A120) -- (A21) -- (A22) -- (A121) -- cycle;
    \draw (A121) -- (A22) -- (A3) -- (A23) -- cycle;
\end{tikzpicture}
      &
\begin{tikzpicture}[scale=5,line width=0.8,color=blue!70]
    \coordinate (A1) at (0,0) {};
    \coordinate (A2) at (1,0) {};
    \coordinate (A3) at (1,1) {};
    \coordinate (A4) at (0,1) {};
    \coordinate (A5) at (0.1111111111109082,0) {};
    \coordinate (A6) at (0.2222222222217143,0) {};
    \coordinate (A7) at (0.333333333332501,0) {};
    \coordinate (A8) at (0.4444444444432878,0) {};
    \coordinate (A9) at (0.5555555555543833,0) {};
    \coordinate (A10) at (0.6666666666657874,0) {};
    \coordinate (A11) at (0.7777777777771916,0) {};
    \coordinate (A12) at (0.8888888888885959,0) {};
    \coordinate (A13) at (1,0.1111111111109082) {};
    \coordinate (A14) at (1,0.2222222222217143) {};
    \coordinate (A15) at (1,0.333333333332501) {};
    \coordinate (A16) at (1,0.4444444444432878) {};
    \coordinate (A17) at (1,0.5555555555543833) {};
    \coordinate (A18) at (1,0.6666666666657874) {};
    \coordinate (A19) at (1,0.7777777777771916) {};
    \coordinate (A20) at (1,0.8888888888885959) {};
    \coordinate (A21) at (0.8888888888884262,1) {};
    \coordinate (A22) at (0.777777777777932,1) {};
    \coordinate (A23) at (0.6666666666675918,1) {};
    \coordinate (A24) at (0.5555555555572513,1) {};
    \coordinate (A25) at (0.4444444444462943,1) {};
    \coordinate (A26) at (0.3333333333347207,1) {};
    \coordinate (A27) at (0.2222222222231471,1) {};
    \coordinate (A28) at (0.1111111111115736,1) {};
    \coordinate (A29) at (0,0.8888888888884262) {};
    \coordinate (A30) at (0,0.777777777777932) {};
    \coordinate (A31) at (0,0.6666666666675918) {};
    \coordinate (A32) at (0,0.5555555555572513) {};
    \coordinate (A33) at (0,0.4444444444462943) {};
    \coordinate (A34) at (0,0.3333333333347207) {};
    \coordinate (A35) at (0,0.2222222222231471) {};
    \coordinate (A36) at (0,0.1111111111115736) {};
    \coordinate (A37) at (0.508755091245145,0.4950428077804493) {};
    \coordinate (A38) at (0.732373771066312,0.7289571218324546) {};
    \coordinate (A39) at (0.3105444075249406,0.7267024139067387) {};
    \coordinate (A40) at (0.6994777808277892,0.2946552060047777) {};
    \coordinate (A41) at (0.269776222337017,0.2719832445307022) {};
    \coordinate (A42) at (0.4797081563651218,0.7832667877340422) {};
    \coordinate (A43) at (0.7763247227200936,0.5504358380670301) {};
    \coordinate (A44) at (0.2089705854966178,0.5011647435726869) {};
    \coordinate (A45) at (0.5261830204236875,0.2169542291844375) {};
    \coordinate (A46) at (0.8132571162334484,0.810419150867757) {};
    \coordinate (A47) at (0.1986176091296227,0.805144684316691) {};
    \coordinate (A48) at (0.8049290095854345,0.1916564042176882) {};
    \coordinate (A49) at (0.1886974585467099,0.1894068751924823) {};
    \coordinate (A50) at (0.5638279784618917,0.6104574840757054) {};
    \coordinate (A51) at (0.3830707318893582,0.5486589610400849) {};
    \coordinate (A52) at (0.6403338422190834,0.4303405827428439) {};
    \coordinate (A53) at (0.4508368296362739,0.3820156619411093) {};
    \coordinate (A54) at (0.6573380487871894,0.818746379040202) {};
    \coordinate (A55) at (0.1508173952713392,0.6152315242515467) {};
    \coordinate (A56) at (0.8111982692351223,0.3586503574118033) {};
    \coordinate (A57) at (0.3442899824532811,0.1823415987908169) {};
    \coordinate (A58) at (0.8278420472549306,0.6301749459016944) {};
    \coordinate (A59) at (0.6185485743005283,0.1717166804739727) {};
    \coordinate (A60) at (0.3885577802280353,0.8299507205338855) {};
    \coordinate (A61) at (0.1798491529468413,0.344031407253954) {};
    \coordinate (A62) at (0.6944143156969094,0.5308553948124416) {};
    \coordinate (A63) at (0.4336905338706993,0.669400786980654) {};
    \coordinate (A64) at (0.5716195305329728,0.3276340072371606) {};
    \coordinate (A65) at (0.356363657458026,0.440216967908619) {};
    \coordinate (A66) at (0.4869955560379161,0.8888816442858173) {};
    \coordinate (A67) at (0.1097131868598185,0.5151115616358697) {};
    \coordinate (A68) at (0.8855720301149671,0.5238978108268397) {};
    \coordinate (A69) at (0.5170542472459531,0.1110820683650001) {};
    \coordinate (A70) at (0.7021522788500931,0.9041308489596316) {};
    \coordinate (A71) at (0.09046621310319609,0.6903669689245789) {};
    \coordinate (A72) at (0.9011540430689917,0.3023699776414385) {};
    \coordinate (A73) at (0.2987257870097897,0.09653650044140948) {};
    \coordinate (A74) at (0.9103368406655487,0.6961163199300626) {};
    \coordinate (A75) at (0.6932709358133692,0.09042306018662782) {};
    \coordinate (A76) at (0.3099931245664933,0.9092609093726997) {};
    \coordinate (A77) at (0.09560385293403414,0.2986425490706365) {};
    \coordinate (A78) at (0.624288143392099,0.744753045634639) {};
    \coordinate (A79) at (0.2559113466918461,0.5974976446361265) {};
    \coordinate (A80) at (0.7374758983363169,0.4164425004604435) {};
    \coordinate (A81) at (0.3799149636125554,0.2557441054536824) {};
    \coordinate (A82) at (0.6020532458771199,0.5150794427177929) {};
    \coordinate (A83) at (0.5405040423678343,0.4084490286608732) {};
    \coordinate (A84) at (0.402010544052951,0.7517206800889338) {};
    \coordinate (A85) at (0.2505849569876462,0.380309524300411) {};
    \coordinate (A86) at (0.8989449492443384,0.8983272264902707) {};
    \coordinate (A87) at (0.1032550746863064,0.8972359377764949) {};
    \coordinate (A88) at (0.8973840769533368,0.1018723159699704) {};
    \coordinate (A89) at (0.1015681509432729,0.1017413752607486) {};
    \coordinate (A90) at (0.7231171550745163,0.8154565529097741) {};
    \coordinate (A91) at (0.1996681112632412,0.6869994020498515) {};
    \coordinate (A92) at (0.8066264265949994,0.2893110265884782) {};
    \coordinate (A93) at (0.2787443276054614,0.1851238039008127) {};
    \coordinate (A94) at (0.8215049067433008,0.7162134523762784) {};
    \coordinate (A95) at (0.7052186117696142,0.1839799032491825) {};
    \coordinate (A96) at (0.3009052246028832,0.8171025384836558) {};
    \coordinate (A97) at (0.1834766425036917,0.2793281510477015) {};
    \coordinate (A98) at (0.4719899328956793,0.5816205877393462) {};
    \coordinate (A99) at (0.4209438829989718,0.4654815181903776) {};
    \coordinate (A100) at (0.7465603384900987,0.6371279821672595) {};
    \coordinate (A101) at (0.6026388021093365,0.2508064444767395) {};
    \coordinate (A102) at (0.07839223773779971,0.6064212698935145) {};
    \coordinate (A103) at (0.6191724684299051,0.8806621731940115) {};
    \coordinate (A104) at (0.4073431945799385,0.9096233525909559) {};
    \coordinate (A105) at (0.1183501617446496,0.3817168450083969) {};
    \coordinate (A106) at (0.9087473111470848,0.6008991977992667) {};
    \coordinate (A107) at (0.5964055090184236,0.09058634524720877) {};
    \coordinate (A108) at (0.3825428816083688,0.1199304689852927) {};
    \coordinate (A109) at (0.8765598441390103,0.3916783593263462) {};
    \coordinate (A110) at (0.7979748879491598,0.9038913527177752) {};
    \coordinate (A111) at (0.09844665708367478,0.7915872999177173) {};
    \coordinate (A112) at (0.9057888941495067,0.7957476743316827) {};
    \coordinate (A113) at (0.2084255319833656,0.9032773469741769) {};
    \coordinate (A114) at (0.7935637384934968,0.09559769033230051) {};
    \coordinate (A115) at (0.09585633786409284,0.2030400058448737) {};
    \coordinate (A116) at (0.901379576183599,0.2047769324380561) {};
    \coordinate (A117) at (0.2029759074626876,0.09646897752462893) {};
    \coordinate (A118) at (0.3438327562628685,0.6342273295115328) {};
    \coordinate (A119) at (0.3604692304651219,0.3691162762640802) {};
    \coordinate (A120) at (0.4845860660808211,0.296405454512382) {};
    \coordinate (A121) at (0.4512960720688876,0.8338136394963934) {};
    \coordinate (A122) at (0.168222176866359,0.4501164853802866) {};
    \coordinate (A123) at (0.8295519184351208,0.5682935141225732) {};
    \coordinate (A124) at (0.5542143142813223,0.1668478374991561) {};
    \coordinate (A125) at (0.550264450411045,0.8299770938899462) {};
    \coordinate (A126) at (0.8245946532093016,0.4714086410040856) {};
    \coordinate (A127) at (0.1817634738759922,0.55871412964295) {};
    \coordinate (A128) at (0.4534299435069698,0.1704395178076721) {};
    \coordinate (A129) at (0.6574231420167046,0.6292324741062088) {};
    \coordinate (A130) at (0.5233157352566016,0.7013669049646052) {};
    \coordinate (A131) at (0.2918432996570445,0.494547153656359) {};
    \coordinate (A132) at (0.6343884172372523,0.353051695280704) {};
    \draw (A56) -- (A80) -- (A40) -- (A92) -- cycle;
    \draw (A57) -- (A81) -- (A41) -- (A93) -- cycle;
    \draw (A54) -- (A78) -- (A38) -- (A90) -- cycle;
    \draw (A53) -- (A83) -- (A37) -- (A99) -- cycle;
    \draw (A52) -- (A82) -- (A37) -- (A83) -- cycle;
    \draw (A63) -- (A130) -- (A42) -- (A84) -- cycle;
    \draw (A62) -- (A129) -- (A50) -- (A82) -- cycle;
    \draw (A64) -- (A132) -- (A52) -- (A83) -- cycle;
    \draw (A13) -- (A88) -- (A12) -- (A2) -- cycle;
    \draw (A5) -- (A89) -- (A36) -- (A1) -- cycle;
    \draw (A21) -- (A86) -- (A20) -- (A3) -- cycle;
    \draw (A29) -- (A87) -- (A28) -- (A4) -- cycle;
    \draw (A98) -- (A63) -- (A118) -- (A51) -- cycle;
    \draw (A101) -- (A64) -- (A120) -- (A45) -- cycle;
    \draw (A99) -- (A65) -- (A119) -- (A53) -- cycle;
    \draw (A51) -- (A99) -- (A37) -- (A98) -- cycle;
    \draw (A50) -- (A98) -- (A37) -- (A82) -- cycle;
    \draw (A131) -- (A44) -- (A122) -- (A85) -- cycle;
    \draw (A33) -- (A67) -- (A102) -- (A32) -- cycle;
    \draw (A24) -- (A25) -- (A104) -- (A66) -- cycle;
    \draw (A16) -- (A17) -- (A106) -- (A68) -- cycle;
    \draw (A8) -- (A9) -- (A107) -- (A69) -- cycle;
    \draw (A103) -- (A54) -- (A90) -- (A70) -- cycle;
    \draw (A102) -- (A55) -- (A91) -- (A71) -- cycle;
    \draw (A109) -- (A56) -- (A92) -- (A72) -- cycle;
    \draw (A108) -- (A57) -- (A93) -- (A73) -- cycle;
    \draw (A39) -- (A84) -- (A60) -- (A96) -- cycle;
    \draw (A41) -- (A85) -- (A61) -- (A97) -- cycle;
    \draw (A24) -- (A66) -- (A125) -- (A103) -- cycle;
    \draw (A16) -- (A68) -- (A126) -- (A109) -- cycle;
    \draw (A8) -- (A69) -- (A128) -- (A108) -- cycle;
    \draw (A33) -- (A105) -- (A122) -- (A67) -- cycle;
    \draw (A91) -- (A79) -- (A118) -- (A39) -- cycle;
    \draw (A119) -- (A81) -- (A120) -- (A53) -- cycle;
    \draw (A91) -- (A55) -- (A127) -- (A79) -- cycle;
    \draw (A106) -- (A74) -- (A94) -- (A58) -- cycle;
    \draw (A107) -- (A75) -- (A95) -- (A59) -- cycle;
    \draw (A104) -- (A76) -- (A96) -- (A60) -- cycle;
    \draw (A105) -- (A77) -- (A97) -- (A61) -- cycle;
    \draw (A125) -- (A42) -- (A130) -- (A78) -- cycle;
    \draw (A127) -- (A44) -- (A131) -- (A79) -- cycle;
    \draw (A62) -- (A80) -- (A126) -- (A43) -- cycle;
    \draw (A120) -- (A81) -- (A128) -- (A45) -- cycle;
    \draw (A50) -- (A130) -- (A63) -- (A98) -- cycle;
    \draw (A51) -- (A131) -- (A65) -- (A99) -- cycle;
    \draw (A118) -- (A79) -- (A131) -- (A51) -- cycle;
    \draw (A40) -- (A80) -- (A52) -- (A132) -- cycle;
    \draw (A125) -- (A66) -- (A121) -- (A42) -- cycle;
    \draw (A127) -- (A67) -- (A122) -- (A44) -- cycle;
    \draw (A126) -- (A68) -- (A123) -- (A43) -- cycle;
    \draw (A128) -- (A69) -- (A124) -- (A45) -- cycle;
    \draw (A90) -- (A46) -- (A110) -- (A70) -- cycle;
    \draw (A91) -- (A47) -- (A111) -- (A71) -- cycle;
    \draw (A92) -- (A48) -- (A116) -- (A72) -- cycle;
    \draw (A93) -- (A49) -- (A117) -- (A73) -- cycle;
    \draw (A94) -- (A74) -- (A112) -- (A46) -- cycle;
    \draw (A96) -- (A76) -- (A113) -- (A47) -- cycle;
    \draw (A95) -- (A75) -- (A114) -- (A48) -- cycle;
    \draw (A97) -- (A77) -- (A115) -- (A49) -- cycle;
    \draw (A130) -- (A50) -- (A129) -- (A78) -- cycle;
    \draw (A78) -- (A129) -- (A100) -- (A38) -- cycle;
    \draw (A43) -- (A100) -- (A129) -- (A62) -- cycle;
    \draw (A40) -- (A132) -- (A64) -- (A101) -- cycle;
    \draw (A42) -- (A121) -- (A60) -- (A84) -- cycle;
    \draw (A76) -- (A104) -- (A25) -- (A26) -- cycle;
    \draw (A77) -- (A105) -- (A33) -- (A34) -- cycle;
    \draw (A74) -- (A106) -- (A17) -- (A18) -- cycle;
    \draw (A75) -- (A107) -- (A9) -- (A10) -- cycle;
    \draw (A71) -- (A31) -- (A32) -- (A102) -- cycle;
    \draw (A70) -- (A23) -- (A24) -- (A103) -- cycle;
    \draw (A73) -- (A7) -- (A8) -- (A108) -- cycle;
    \draw (A72) -- (A15) -- (A16) -- (A109) -- cycle;
    \draw (A43) -- (A123) -- (A58) -- (A100) -- cycle;
    \draw (A45) -- (A124) -- (A59) -- (A101) -- cycle;
    \draw (A80) -- (A62) -- (A82) -- (A52) -- cycle;
    \draw (A64) -- (A83) -- (A53) -- (A120) -- cycle;
    \draw (A38) -- (A100) -- (A58) -- (A94) -- cycle;
    \draw (A40) -- (A101) -- (A59) -- (A95) -- cycle;
    \draw (A23) -- (A70) -- (A110) -- (A22) -- cycle;
    \draw (A31) -- (A71) -- (A111) -- (A30) -- cycle;
    \draw (A15) -- (A72) -- (A116) -- (A14) -- cycle;
    \draw (A7) -- (A73) -- (A117) -- (A6) -- cycle;
    \draw (A18) -- (A19) -- (A112) -- (A74) -- cycle;
    \draw (A10) -- (A11) -- (A114) -- (A75) -- cycle;
    \draw (A26) -- (A27) -- (A113) -- (A76) -- cycle;
    \draw (A34) -- (A35) -- (A115) -- (A77) -- cycle;
    \draw (A39) -- (A118) -- (A63) -- (A84) -- cycle;
    \draw (A131) -- (A85) -- (A119) -- (A65) -- cycle;
    \draw (A81) -- (A119) -- (A85) -- (A41) -- cycle;
    \draw (A46) -- (A90) -- (A38) -- (A94) -- cycle;
    \draw (A47) -- (A91) -- (A39) -- (A96) -- cycle;
    \draw (A48) -- (A92) -- (A40) -- (A95) -- cycle;
    \draw (A49) -- (A93) -- (A41) -- (A97) -- cycle;
    \draw (A22) -- (A110) -- (A86) -- (A21) -- cycle;
    \draw (A14) -- (A116) -- (A88) -- (A13) -- cycle;
    \draw (A30) -- (A111) -- (A87) -- (A29) -- cycle;
    \draw (A6) -- (A117) -- (A89) -- (A5) -- cycle;
    \draw (A19) -- (A20) -- (A86) -- (A112) -- cycle;
    \draw (A11) -- (A12) -- (A88) -- (A114) -- cycle;
    \draw (A27) -- (A28) -- (A87) -- (A113) -- cycle;
    \draw (A35) -- (A36) -- (A89) -- (A115) -- cycle;
    \draw (A110) -- (A46) -- (A112) -- (A86) -- cycle;
    \draw (A111) -- (A47) -- (A113) -- (A87) -- cycle;
    \draw (A116) -- (A48) -- (A114) -- (A88) -- cycle;
    \draw (A117) -- (A49) -- (A115) -- (A89) -- cycle;
    \draw (A78) -- (A54) -- (A103) -- (A125) -- cycle;
    \draw (A67) -- (A127) -- (A55) -- (A102) -- cycle;
    \draw (A80) -- (A56) -- (A109) -- (A126) -- cycle;
    \draw (A81) -- (A57) -- (A108) -- (A128) -- cycle;
    \draw (A68) -- (A106) -- (A58) -- (A123) -- cycle;
    \draw (A69) -- (A107) -- (A59) -- (A124) -- cycle;
    \draw (A66) -- (A104) -- (A60) -- (A121) -- cycle;
    \draw (A85) -- (A122) -- (A105) -- (A61) -- cycle;
\end{tikzpicture}
      \\[0.4cm]
    \end{tabular}
\begin{tikzpicture}[scale=5,line width=0.8,color=blue!70]
    \coordinate (A1) at (0,0) {};
    \coordinate (A2) at (1,0) {};
    \coordinate (A3) at (1,1) {};
    \coordinate (A4) at (0,1) {};
    \coordinate (A5) at (0.1111111111108444,0) {};
    \coordinate (A6) at (0.2222222222216888,0) {};
    \coordinate (A7) at (0.3333333333326072,0) {};
    \coordinate (A8) at (0.4444444444435502,0) {};
    \coordinate (A9) at (0.5555555555546474,0) {};
    \coordinate (A10) at (0.6666666666658989,0) {};
    \coordinate (A11) at (0.7777777777771888,0) {};
    \coordinate (A12) at (0.8888888888885944,0) {};
    \coordinate (A13) at (1,0.8888888888890432) {};
    \coordinate (A14) at (1,0.7777777777780862) {};
    \coordinate (A15) at (1,0.6666666666673606) {};
    \coordinate (A16) at (1,0.5555555555567121) {};
    \coordinate (A17) at (1,0.4444444444457551) {};
    \coordinate (A18) at (1,0.3333333333344898) {};
    \coordinate (A19) at (1,0.2222222222231475) {};
    \coordinate (A20) at (1,0.1111111111115738) {};
    \coordinate (A21) at (0.8888888888890432,1) {};
    \coordinate (A22) at (0.7777777777780862,1) {};
    \coordinate (A23) at (0.6666666666673606,1) {};
    \coordinate (A24) at (0.5555555555567121,1) {};
    \coordinate (A25) at (0.4444444444457551,1) {};
    \coordinate (A26) at (0.3333333333344898,1) {};
    \coordinate (A27) at (0.2222222222231475,1) {};
    \coordinate (A28) at (0.1111111111115738,1) {};
    \coordinate (A29) at (0,0.8888888888890432) {};
    \coordinate (A30) at (0,0.7777777777780862) {};
    \coordinate (A31) at (0,0.6666666666673606) {};
    \coordinate (A32) at (0,0.5555555555567121) {};
    \coordinate (A33) at (0,0.4444444444457551) {};
    \coordinate (A34) at (0,0.3333333333344898) {};
    \coordinate (A35) at (0,0.2222222222231475) {};
    \coordinate (A36) at (0,0.1111111111115738) {};
    \coordinate (A37) at (0.4896968103158592,0.3727320363267672) {};
    \coordinate (A38) at (0.7814174888284939,0.2923707184890695) {};
    \coordinate (A39) at (0.2817473973531531,0.7684662513060894) {};
    \coordinate (A40) at (0.2048629024577838,0.2373795989370628) {};
    \coordinate (A41) at (0.6799809136530539,0.7071724182029255) {};
    \coordinate (A42) at (0.08485877000902493,0.8400561445870619) {};
    \coordinate (A43) at (0.9086481653156846,0.8446829174292438) {};
    \coordinate (A44) at (0.7398203597901577,0.08089655423535448) {};
    \coordinate (A45) at (0.8337712705119823,0.9205681180682782) {};
    \coordinate (A46) at (0.2285666179133003,0.4164543097155909) {};
    \coordinate (A47) at (0.7848936673765179,0.5342384230085764) {};
    \coordinate (A48) at (0.4817607219246043,0.7369650480829728) {};
    \coordinate (A49) at (0.5074735280923236,0.1041295748403279) {};
    \coordinate (A50) at (0.3741446157024846,0.5690956118196183) {};
    \coordinate (A51) at (0.1521525594112199,0.589851255970592) {};
    \coordinate (A52) at (0.5799772749379728,0.5502027953588847) {};
    \coordinate (A53) at (0.5393141938802573,0.8415676688785179) {};
    \coordinate (A54) at (0.1370621030263838,0.3418648614082736) {};
    \coordinate (A55) at (0.6744828549367706,0.3496675109439035) {};
    \coordinate (A56) at (0.3087554387305871,0.2136725965208915) {};
    \coordinate (A57) at (0.3833936452112924,0.9027629765868167) {};
    \coordinate (A58) at (0.8852153119288965,0.7404142479088892) {};
    \coordinate (A59) at (0.09927008393052654,0.7200404949914917) {};
    \coordinate (A60) at (0.2695086296796442,0.8823071535339428) {};
    \coordinate (A61) at (0.9227565994328794,0.358564844115848) {};
    \coordinate (A62) at (0.093632191895219,0.2185984559367971) {};
    \coordinate (A63) at (0.7180011977241172,0.9072137582536914) {};
    \coordinate (A64) at (0.1544112388784324,0.9127514033035978) {};
    \coordinate (A65) at (0.1733565179802812,0.1121912365134053) {};
    \coordinate (A66) at (0.2846065314033214,0.1049190526471203) {};
    \coordinate (A67) at (0.8024430396566005,0.825956606531551) {};
    \coordinate (A68) at (0.181684938271093,0.8090562891790953) {};
    \coordinate (A69) at (0.7952852178770345,0.179069138838037) {};
    \coordinate (A70) at (0.1417769572345392,0.4577194466281078) {};
    \coordinate (A71) at (0.4974776181496037,0.2331104683922483) {};
    \coordinate (A72) at (0.3535059755636742,0.4004523633060279) {};
    \coordinate (A73) at (0.7304927539894233,0.6230261295803099) {};
    \coordinate (A74) at (0.5919083287089618,0.2905435063162533) {};
    \coordinate (A75) at (0.2609801644080395,0.5888212004589696) {};
    \coordinate (A76) at (0.4242751648647285,0.4764173200645572) {};
    \coordinate (A77) at (0.9106048888214398,0.5443440030621306) {};
    \coordinate (A78) at (0.6307026993851398,0.6298811783313787) {};
    \coordinate (A79) at (0.8479692322957689,0.6379934959593609) {};
    \coordinate (A80) at (0.5298935736300789,0.6433168547376779) {};
    \coordinate (A81) at (0.6786304494820828,0.5435067832174573) {};
    \coordinate (A82) at (0.3818053876132408,0.2780938254121307) {};
    \coordinate (A83) at (0.4792113007111641,0.5593110921598804) {};
    \coordinate (A84) at (0.3067647436583859,0.4914247064419113) {};
    \coordinate (A85) at (0.07128234270066712,0.509920105516576) {};
    \coordinate (A86) at (0.3813321221977031,0.7352879725144476) {};
    \coordinate (A87) at (0.2180482085250969,0.5088541838430343) {};
    \coordinate (A88) at (0.7714338106896199,0.7259974944446341) {};
    \coordinate (A89) at (0.5944572676282027,0.7568190139775648) {};
    \coordinate (A90) at (0.5964793155985781,0.1812766877710464) {};
    \coordinate (A91) at (0.4289902577593507,0.6497121935235235) {};
    \coordinate (A92) at (0.3212513445327513,0.6736832528291727) {};
    \coordinate (A93) at (0.3925915166967531,0.1345146837536958) {};
    \coordinate (A94) at (0.5290770919869177,0.4697570002262929) {};
    \coordinate (A95) at (0.6978322146081674,0.8023578180535357) {};
    \coordinate (A96) at (0.6249364516023197,0.4626000317441766) {};
    \coordinate (A97) at (0.3501249778011909,0.8175352079927128) {};
    \coordinate (A98) at (0.241505111217689,0.1688859925170703) {};
    \coordinate (A99) at (0.06454099720848271,0.6084068157405464) {};
    \coordinate (A100) at (0.8648787287589447,0.4486941039438495) {};
    \coordinate (A101) at (0.7467900430843953,0.4295495651585958) {};
    \coordinate (A102) at (0.2690930708841617,0.3146529258833295) {};
    \coordinate (A103) at (0.890310235991261,0.2713180802183151) {};
    \coordinate (A104) at (0.4386264911861728,0.8328538362076228) {};
    \coordinate (A105) at (0.5780813387348107,0.3941789737888583) {};
    \coordinate (A106) at (0.07002428059231805,0.4174564382666405) {};
    \coordinate (A107) at (0.6856537281867894,0.1490479555322426) {};
    \coordinate (A108) at (0.4937264422913281,0.918509289452297) {};
    \coordinate (A109) at (0.2161810813177972,0.6916531241225684) {};
    \coordinate (A110) at (0.6301258394617045,0.8483659028408269) {};
    \coordinate (A111) at (0.8418303059421968,0.3498388510222085) {};
    \coordinate (A112) at (0.8703791834558856,0.07641489395576725) {};
    \coordinate (A113) at (0.07007622399980623,0.9283392873559405) {};
    \coordinate (A114) at (0.9262616649433421,0.9308279848773131) {};
    \coordinate (A115) at (0.6252748589813991,0.08589179539649523) {};
    \coordinate (A116) at (0.6874588287591292,0.2396863830933766) {};
    \coordinate (A117) at (0.6005649825969133,0.9192761032375555) {};
    \coordinate (A118) at (0.9111949274648362,0.1720270892693681) {};
    \coordinate (A119) at (0.06301663683105743,0.1106871709641539) {};
    \draw (A21) -- (A22) -- (A45) -- cycle;
    \draw (A43) -- (A14) -- (A13) -- cycle;
    \draw (A29) -- (A30) -- (A42) -- cycle;
    \draw (A25) -- (A26) -- (A57) -- cycle;
    \draw (A64) -- (A60) -- (A27) -- cycle;
    \draw (A40) -- (A62) -- (A65) -- cycle;
    \draw (A66) -- (A6) -- (A7) -- cycle;
    \draw (A45) -- (A63) -- (A67) -- cycle;
    \draw (A42) -- (A59) -- (A68) -- cycle;
    \draw (A39) -- (A60) -- (A68) -- cycle;
    \draw (A64) -- (A27) -- (A28) -- cycle;
    \draw (A64) -- (A42) -- (A68) -- cycle;
    \draw (A64) -- (A68) -- (A60) -- cycle;
    \draw (A5) -- (A6) -- (A65) -- cycle;
    \draw (A65) -- (A6) -- (A66) -- cycle;
    \draw (A44) -- (A10) -- (A11) -- cycle;
    \draw (A18) -- (A17) -- (A61) -- cycle;
    \draw (A8) -- (A9) -- (A49) -- cycle;
    \draw (A34) -- (A35) -- (A62) -- cycle;
    \draw (A62) -- (A54) -- (A34) -- cycle;
    \draw (A40) -- (A54) -- (A62) -- cycle;
    \draw (A26) -- (A27) -- (A60) -- cycle;
    \draw (A60) -- (A57) -- (A26) -- cycle;
    \draw (A58) -- (A15) -- (A14) -- cycle;
    \draw (A43) -- (A58) -- (A14) -- cycle;
    \draw (A23) -- (A63) -- (A22) -- cycle;
    \draw (A22) -- (A63) -- (A45) -- cycle;
    \draw (A67) -- (A58) -- (A43) -- cycle;
    \draw (A43) -- (A45) -- (A67) -- cycle;
    \draw (A30) -- (A31) -- (A59) -- cycle;
    \draw (A30) -- (A59) -- (A42) -- cycle;
    \draw (A54) -- (A46) -- (A70) -- cycle;
    \draw (A37) -- (A71) -- (A74) -- cycle;
    \draw (A76) -- (A72) -- (A37) -- cycle;
    \draw (A78) -- (A73) -- (A41) -- cycle;
    \draw (A79) -- (A73) -- (A47) -- cycle;
    \draw (A47) -- (A77) -- (A79) -- cycle;
    \draw (A52) -- (A78) -- (A80) -- cycle;
    \draw (A47) -- (A73) -- (A81) -- cycle;
    \draw (A82) -- (A71) -- (A37) -- cycle;
    \draw (A37) -- (A72) -- (A82) -- cycle;
    \draw (A50) -- (A76) -- (A83) -- cycle;
    \draw (A52) -- (A80) -- (A83) -- cycle;
    \draw (A46) -- (A72) -- (A84) -- cycle;
    \draw (A50) -- (A75) -- (A84) -- cycle;
    \draw (A85) -- (A32) -- (A33) -- cycle;
    \draw (A85) -- (A70) -- (A51) -- cycle;
    \draw (A87) -- (A70) -- (A46) -- cycle;
    \draw (A87) -- (A75) -- (A51) -- cycle;
    \draw (A51) -- (A70) -- (A87) -- cycle;
    \draw (A58) -- (A67) -- (A88) -- cycle;
    \draw (A41) -- (A73) -- (A88) -- cycle;
    \draw (A89) -- (A78) -- (A41) -- cycle;
    \draw (A90) -- (A71) -- (A49) -- cycle;
    \draw (A91) -- (A80) -- (A48) -- cycle;
    \draw (A48) -- (A86) -- (A91) -- cycle;
    \draw (A92) -- (A75) -- (A50) -- cycle;
    \draw (A92) -- (A86) -- (A39) -- cycle;
    \draw (A93) -- (A8) -- (A49) -- cycle;
    \draw (A94) -- (A76) -- (A37) -- cycle;
    \draw (A95) -- (A67) -- (A63) -- cycle;
    \draw (A96) -- (A81) -- (A52) -- cycle;
    \draw (A52) -- (A94) -- (A96) -- cycle;
    \draw (A97) -- (A60) -- (A39) -- cycle;
    \draw (A57) -- (A60) -- (A97) -- cycle;
    \draw (A39) -- (A86) -- (A97) -- cycle;
    \draw (A40) -- (A65) -- (A98) -- cycle;
    \draw (A66) -- (A56) -- (A98) -- cycle;
    \draw (A98) -- (A65) -- (A66) -- cycle;
    \draw (A98) -- (A56) -- (A40) -- cycle;
    \draw (A31) -- (A32) -- (A99) -- cycle;
    \draw (A51) -- (A59) -- (A99) -- cycle;
    \draw (A99) -- (A59) -- (A31) -- cycle;
    \draw (A100) -- (A77) -- (A47) -- cycle;
    \draw (A47) -- (A81) -- (A101) -- cycle;
    \draw (A102) -- (A54) -- (A40) -- cycle;
    \draw (A46) -- (A54) -- (A102) -- cycle;
    \draw (A48) -- (A53) -- (A104) -- cycle;
    \draw (A104) -- (A86) -- (A48) -- cycle;
    \draw (A37) -- (A74) -- (A105) -- cycle;
    \draw (A105) -- (A74) -- (A55) -- cycle;
    \draw (A105) -- (A94) -- (A37) -- cycle;
    \draw (A33) -- (A34) -- (A106) -- cycle;
    \draw (A54) -- (A70) -- (A106) -- cycle;
    \draw (A106) -- (A34) -- (A54) -- cycle;
    \draw (A108) -- (A24) -- (A25) -- cycle;
    \draw (A25) -- (A57) -- (A108) -- cycle;
    \draw (A39) -- (A68) -- (A109) -- cycle;
    \draw (A109) -- (A68) -- (A59) -- cycle;
    \draw (A12) -- (A2) -- (A112) -- cycle;
    \draw (A112) -- (A2) -- (A20) -- cycle;
    \draw (A113) -- (A4) -- (A29) -- cycle;
    \draw (A113) -- (A64) -- (A28) -- cycle;
    \draw (A28) -- (A4) -- (A113) -- cycle;
    \draw (A13) -- (A3) -- (A114) -- cycle;
    \draw (A114) -- (A3) -- (A21) -- cycle;
    \draw (A99) -- (A32) -- (A85) -- cycle;
    \draw (A51) -- (A99) -- (A85) -- cycle;
    \draw (A95) -- (A41) -- (A88) -- cycle;
    \draw (A67) -- (A95) -- (A88) -- cycle;
    \draw (A104) -- (A53) -- (A108) -- cycle;
    \draw (A104) -- (A108) -- (A57) -- cycle;
    \draw (A113) -- (A29) -- (A42) -- cycle;
    \draw (A64) -- (A113) -- (A42) -- cycle;
    \draw (A85) -- (A33) -- (A106) -- cycle;
    \draw (A85) -- (A106) -- (A70) -- cycle;
    \draw (A71) -- (A90) -- (A74) -- cycle;
    \draw (A76) -- (A50) -- (A84) -- cycle;
    \draw (A76) -- (A84) -- (A72) -- cycle;
    \draw (A78) -- (A52) -- (A81) -- cycle;
    \draw (A78) -- (A81) -- (A73) -- cycle;
    \draw (A79) -- (A58) -- (A88) -- cycle;
    \draw (A79) -- (A88) -- (A73) -- cycle;
    \draw (A87) -- (A46) -- (A84) -- cycle;
    \draw (A75) -- (A87) -- (A84) -- cycle;
    \draw (A94) -- (A52) -- (A83) -- cycle;
    \draw (A76) -- (A94) -- (A83) -- cycle;
    \draw (A89) -- (A48) -- (A80) -- cycle;
    \draw (A78) -- (A89) -- (A80) -- cycle;
    \draw (A91) -- (A50) -- (A83) -- cycle;
    \draw (A80) -- (A91) -- (A83) -- cycle;
    \draw (A96) -- (A55) -- (A101) -- cycle;
    \draw (A96) -- (A101) -- (A81) -- cycle;
    \draw (A92) -- (A50) -- (A91) -- cycle;
    \draw (A86) -- (A92) -- (A91) -- cycle;
    \draw (A104) -- (A57) -- (A97) -- cycle;
    \draw (A86) -- (A104) -- (A97) -- cycle;
    \draw (A111) -- (A61) -- (A100) -- cycle;
    \draw (A95) -- (A63) -- (A110) -- cycle;
    \draw (A105) -- (A55) -- (A96) -- cycle;
    \draw (A94) -- (A105) -- (A96) -- cycle;
    \draw (A114) -- (A43) -- (A13) -- cycle;
    \draw (A114) -- (A21) -- (A45) -- cycle;
    \draw (A45) -- (A43) -- (A114) -- cycle;
    \draw (A107) -- (A44) -- (A69) -- cycle;
    \draw (A16) -- (A77) -- (A17) -- cycle;
    \draw (A100) -- (A61) -- (A17) -- cycle;
    \draw (A77) -- (A100) -- (A17) -- cycle;
    \draw (A103) -- (A19) -- (A18) -- cycle;
    \draw (A69) -- (A103) -- (A38) -- cycle;
    \draw (A38) -- (A103) -- (A111) -- cycle;
    \draw (A18) -- (A61) -- (A103) -- cycle;
    \draw (A111) -- (A103) -- (A61) -- cycle;
    \draw (A102) -- (A72) -- (A46) -- cycle;
    \draw (A102) -- (A40) -- (A56) -- cycle;
    \draw (A102) -- (A56) -- (A82) -- cycle;
    \draw (A72) -- (A102) -- (A82) -- cycle;
    \draw (A8) -- (A93) -- (A7) -- cycle;
    \draw (A7) -- (A93) -- (A66) -- cycle;
    \draw (A49) -- (A71) -- (A93) -- cycle;
    \draw (A56) -- (A66) -- (A93) -- cycle;
    \draw (A82) -- (A56) -- (A93) -- cycle;
    \draw (A82) -- (A93) -- (A71) -- cycle;
    \draw (A101) -- (A55) -- (A38) -- cycle;
    \draw (A111) -- (A101) -- (A38) -- cycle;
    \draw (A47) -- (A101) -- (A100) -- cycle;
    \draw (A101) -- (A111) -- (A100) -- cycle;
    \draw (A92) -- (A39) -- (A109) -- cycle;
    \draw (A51) -- (A75) -- (A109) -- cycle;
    \draw (A109) -- (A59) -- (A51) -- cycle;
    \draw (A75) -- (A92) -- (A109) -- cycle;
    \draw (A15) -- (A77) -- (A16) -- cycle;
    \draw (A15) -- (A58) -- (A79) -- cycle;
    \draw (A77) -- (A15) -- (A79) -- cycle;
    \draw (A95) -- (A89) -- (A41) -- cycle;
    \draw (A53) -- (A48) -- (A89) -- cycle;
    \draw (A53) -- (A89) -- (A110) -- cycle;
    \draw (A95) -- (A110) -- (A89) -- cycle;
    \draw (A115) -- (A49) -- (A9) -- cycle;
    \draw (A90) -- (A49) -- (A115) -- cycle;
    \draw (A116) -- (A69) -- (A38) -- cycle;
    \draw (A107) -- (A69) -- (A116) -- cycle;
    \draw (A117) -- (A23) -- (A24) -- cycle;
    \draw (A117) -- (A53) -- (A110) -- cycle;
    \draw (A117) -- (A24) -- (A108) -- cycle;
    \draw (A53) -- (A117) -- (A108) -- cycle;
    \draw (A63) -- (A23) -- (A117) -- cycle;
    \draw (A63) -- (A117) -- (A110) -- cycle;
    \draw (A115) -- (A9) -- (A10) -- cycle;
    \draw (A115) -- (A10) -- (A44) -- cycle;
    \draw (A115) -- (A107) -- (A90) -- cycle;
    \draw (A107) -- (A115) -- (A44) -- cycle;
    \draw (A55) -- (A74) -- (A116) -- cycle;
    \draw (A116) -- (A38) -- (A55) -- cycle;
    \draw (A90) -- (A116) -- (A74) -- cycle;
    \draw (A107) -- (A116) -- (A90) -- cycle;
    \draw (A69) -- (A44) -- (A112) -- cycle;
    \draw (A11) -- (A112) -- (A44) -- cycle;
    \draw (A11) -- (A12) -- (A112) -- cycle;
    \draw (A103) -- (A69) -- (A118) -- cycle;
    \draw (A118) -- (A69) -- (A112) -- cycle;
    \draw (A20) -- (A19) -- (A118) -- cycle;
    \draw (A118) -- (A112) -- (A20) -- cycle;
    \draw (A118) -- (A19) -- (A103) -- cycle;
    \draw (A119) -- (A1) -- (A5) -- cycle;
    \draw (A36) -- (A1) -- (A119) -- cycle;
    \draw (A35) -- (A36) -- (A119) -- cycle;
    \draw (A65) -- (A62) -- (A119) -- cycle;
    \draw (A119) -- (A5) -- (A65) -- cycle;
    \draw (A35) -- (A119) -- (A62) -- cycle;
\end{tikzpicture}
  \end{center}
  \caption{\label{fig:Meshes} The meshes on which the conservation of
    divergence or curl are performed. Top left: Cartesian mesh, top right:
  unstructured quadrangular mesh, bottom: unstructured triangular mesh.}
\end{figure}
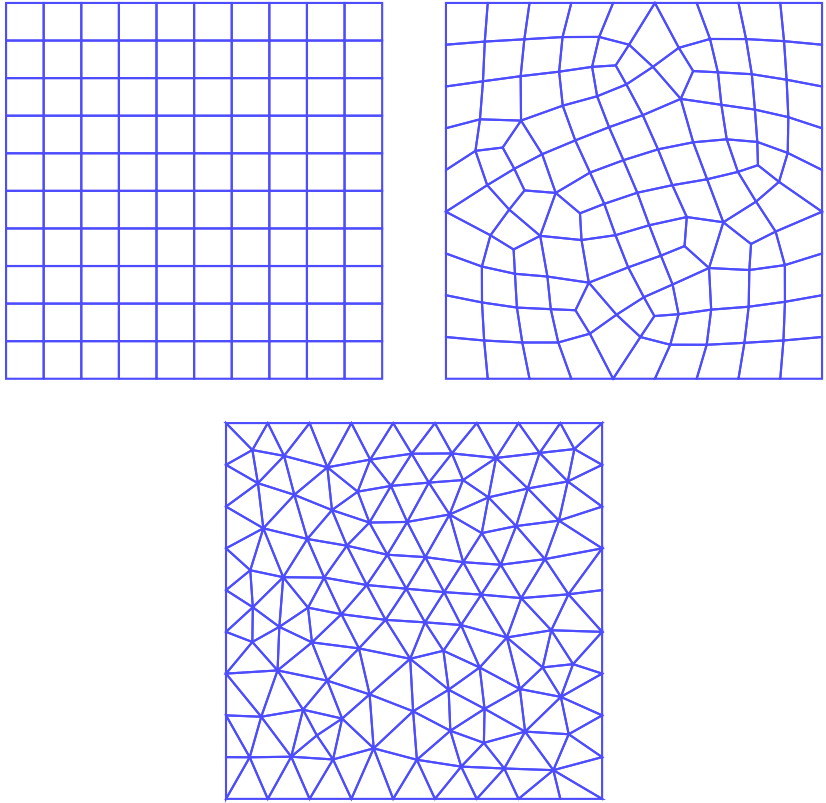

The convergence tests will be performed on Cartesian meshes with
uniform size of the mesh, with $h=0.1$,  $h=0.05$,  $h=0.025$ and
$h=0.0125$.
The convergence tests on triangular meshes will be performed on a set
of unstructured meshes with a number of cells equal to 268, 1036,
4186 and 16682, and minimal size (defined as the square root of the
surface of the smallest element) approximately equal to
$0.0844$, $0.0419$, $0.0223$ and $0.0109$.

For all the test cases, the time stepping is done with classical
Strong-Stability Preserving explicit time schemes \cite{gottlieb2009high}.
If $k$ is the polynomial order of the space discretization,
the order of the time
integration scheme is $k+1$. The CFL number is equal to $0.5$ for
$k=0$, $0.33$ for $k=1$ and $0.2$ for $k=2$.

Concerning the optimal order of convergence we may get, we make
the following remark
\begin{remark}[Optimal order of convergence]
  \label{rem:ConvergenceCartesian}
  It has been known for a while that the convergence order of the
  discontinuous Galerkin method for advection problems is of order
  $k+1/2$ \cite{johnson1986analysis,peterson1991note} in general. On Cartesian
  meshes, the order of convergence can be improved to $k+1$ by using
  tensor products of one dimensional basis \cite{lesaint1974finite}.
  Here, we are working with
  basis for vectors that are not tensor products of one dimensional basis.
  Therefore, the optimal order we can expect is only $k+1/2$, even
  though we will frequently observe $k+1$. 
\end{remark}

\subsection{Discrete conservation of the divergence: Maxwell system}

In this section, we are interested in the two-dimensional Maxwell system in
the vacuum,
which reads
\begin{equation}
  \label{eq:Maxwell}
  \left\{
  \begin{array}{l}
    \partial_t b + \nablaperp \cdot \be = 0 \\
    \partial_t \be + c^2  \nablaperp b = 0,
  \end{array}
  \right.
\end{equation}
where $\be$ is the two-dimensional electric field and $b$ is the
$z$-component of the magnetic field. 

\subsubsection{Conservation of the divergence of a stationary solution}
\label{subsubsec:ConservationDivergence}

For this first test case, the following initial condition is imposed
$$
\left\{
\begin{array}{r@{\, = \, }l}
  b(\bx) & 0\\
  \be_x (\bx) & \overline{x} \, \ex{-\overline{r}^2 / 2}\\
  \be_y (\bx) & \overline{y} \, \ex{-\overline{r}^2 / 2},
\end{array}
\right.
$$
with $\overline{x} = \dfrac{x-x_c}{r_0}$,
$\overline{y} = \dfrac{y-y_c}{r_0}$, $r^2 = (x-x_c)^2+(y-y_c)^2$, and
$\overline{r} = \dfrac{r}{r_0}$. This initial condition is such that
$\nablaperp \cdot \be = 0$, and so is clearly a stationary solution
of \eqref{eq:Maxwell}. This solution was built by considering the
potential $\ex{-\overline{r}^2/2}$
and by taking its gradient. The numerical parameters are $r_0 = 0.15$,
$x_c = y_c = 0.5$, and $c^2 = 1$. The computation is led until $t=3$ on the
different meshes represented in \autoref{fig:Meshes}, and The $L^2$
difference between $\nabla^\star \be$ and its initial value is computed
along the time.
Two configurations of finite element spaces for approximating
$\be$ are used:
\begin{itemize}
\item the finite element spaces $\Bcurl_k$,
\item the classical finite element spaces for discontinuous Galerkin
  methods obtained by tensorization of the scalar basis (note however that
  in the triangular case, these two approximation spaces match),
\end{itemize}
and with two different numerical flux:
\begin{itemize}
\item the Lax-Friedrich flux,
\item the Lax-Friedrich flux with purely tangential diffusion
  \eqref{eq:LaxFriedrichTangential}, which matches in the case of
    the two-dimensional Maxwell system \eqref{eq:Maxwell} with the Godunov'
    flux.
\end{itemize}
Numerical results are represented in
\autoref{fig:StationaryDivergenceConservation} for degree $k=0,1,2$, and
show that the only combination that preserves correctly the divergence is the
one with the finite element space $\Bcurl_k$ and with the Godunov'
numerical flux.
\begin{figure}
  \begin{center}
    \includegraphics[width=14.5cm]{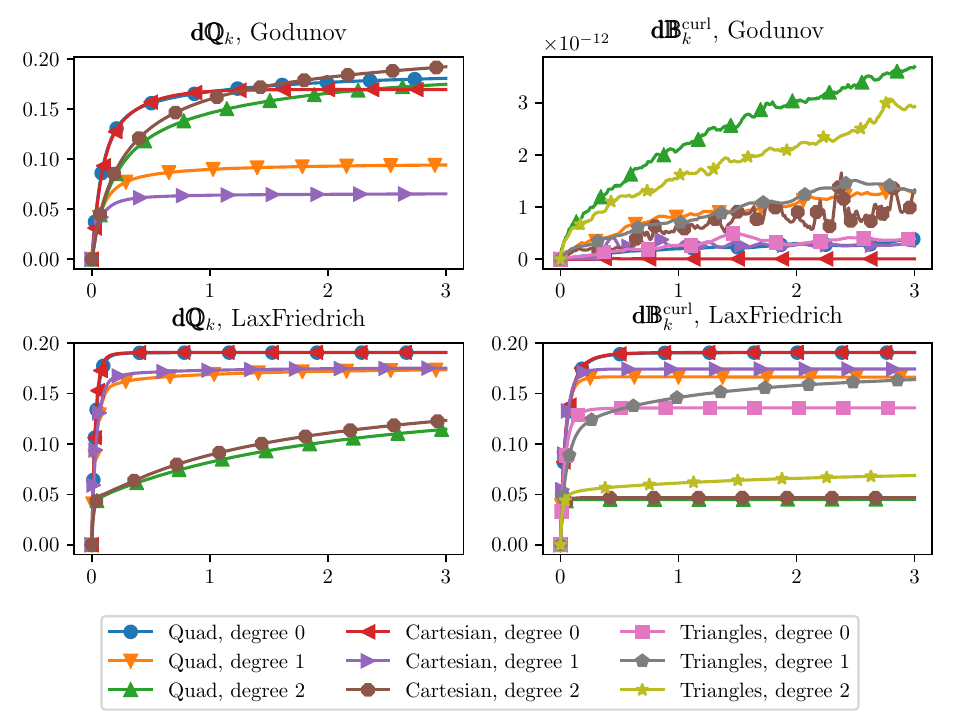}
  \end{center}
  \caption{\label{fig:StationaryDivergenceConservation}
    Plot of $\norme{\nabla^\star \be - \nabla^\star \be_0}_2$ with respect to
    time for different degree, approximation space,
    type of meshes and numerical flux. In the
    left column, Cartesian and unstructured quadrangular meshes are
    considered, with the $\bdQQ_k$ approximation space with the Godunov
    (top figure) and Lax-Friedrich (bottom figure) numerical flux.
    In the right column,
    Cartesian, unstructured quadrangular and triangular meshes are
    considered, with the $\Bcurl_k$ approximation space with the Godunov
    (top figure) and Lax-Friedrich (bottom figure) numerical flux.
    On triangular meshes, the space $\Bcurl_k$ and $\bdPP_k$ are the same, and
    the computations on these spaces are represented on the right column. 
    Note that
    the $y$ scaling of the top right figure ($\Bcurl_k$ with Godunov' scheme)
    is $10^{-12}$, whereas it is of the
    order of $1$ for the other plots. 
  }
\end{figure}

\subsubsection{Convergence test}
\label{subsubsec:ConvergenceDivergence}

The numerical test is taken from \cite[Section 5.1]{munz2000divergence}. 
The initial condition is
\begin{equation}
  \label{eq:exactDivergence}
\left\{
\begin{array}{r@{\, = \, }l}
  b (\bx) & \dfrac{\omega}{c^2}
  \cos \left( k_{\perp} \pi y \right)
  \sin \left( k_\parallel \pi x \right)
  \\
  \be_x  (\bx) & - k_{\perp} \pi \, \sin \left( k_{\perp} \pi y \right)
  \cos \left( k_\parallel \pi x \right)\\
  \be_y  (\bx) & k_{\parallel} \pi 
  \cos \left( k_{\perp} \pi y \right)
  \sin \left( k_\parallel \pi x \right)
  \\
\end{array}
\right.
\end{equation}
and the exact solution is
$$
\left\{
\begin{array}{r@{\, = \, }l}
  b  (\bx,t) & \dfrac{\omega}{c^2}
  \cos \left( k_{\perp} \pi y \right)
  \sin \left( k_\parallel \pi x - \omega t \right)
  \\
  \be_x  (\bx,t)& - k_{\perp} \pi \, \sin \left( k_{\perp} \pi y \right)
  \cos \left( k_\parallel \pi x - \omega t \right)\\
  \be_y  (\bx,t) & k_{\parallel} \pi 
  \cos \left( k_{\perp} \pi y \right)
  \sin \left( k_\parallel \pi x - \omega t \right)
  \\
\end{array}
\right.
$$
The longitudinal and transverse wave numbers $k_\parallel$ and $k_\perp$ are
data of the test case. The frequency $\omega$ is such that
$$k_\parallel ^2 + k_\perp ^2 = \dfrac{\omega^2}{\pi^2 c^2}.$$
The numerical parameters are $c=1$, and 
$k_\parallel = k_\perp = 2$.

A second test case is performed, in which a stationary non divergence free
solution is added to the initial solution \eqref{eq:exactDivergence}
(it is similar to the stationary solution of
\autoref{subsubsec:ConservationDivergence}, but regular),
equal to
\begin{equation}
  \label{eq:DivergenceAddVortex}
  \left\{
  \begin{array}{r@{\, = \, }l}
    b(\bx) & 0\\
    \be_x (\bx) & 2 K_0 \, \alpha \, \overline{x} \, \dfrac{\ex{-\alpha/(1-\overline{r}^2)}}{(1-\overline{r}^2)^2}\\
    \be_y (\bx) & 2 K_0 \, \alpha \, \overline{y} \, \dfrac{\ex{-\alpha/(1-\overline{r}^2)}}{(1-\overline{r}^2)^2},
  \end{array}
  \right.
\end{equation}
if $r < r_0$, and $0$ otherwise. The numerical parameters are
$K_0 = 100$, $r_0 = 0.35$, $x_c = y_c = 0.5$, $\alpha=4$. 

The convergence curves obtained on the two test cases
on each variable is plotted in \autoref{fig:DivergenceConvergenceQuad}. 
The errors obtained and rate of convergence for Cartesian meshes on the
variable $\be_x$ are gathered in 
\autoref{tab:DivergenceConvergenceQuadSineCos} for the case with
the initial condition \eqref{eq:exactDivergence}
and in \autoref{tab:DivergenceConvergenceQuadSineCosPlusVortex} where
\eqref{eq:DivergenceAddVortex} was added to 
the initial condition \eqref{eq:exactDivergence}. 
With the single initial condition \eqref{eq:exactDivergence}, the
order of convergence obtained is not very sensitive to the change of
approximation basis or of the change of numerical flux. This is probably due
to the fact that \eqref{eq:exactDivergence} is already divergence free, and
so the numerical divergence is already low when
computing the initial condition, 
and so the benefit of preserving exactly the divergence by using the
space $\Bcurl_k$ with Godunov flux is low.

The same plots are performed on the series of triangular meshes, shown in
\autoref{fig:DivergenceConvergenceTriangle}, and the errors obtained
for the variable $\be_x$ are shown in
\autoref{tab:DivergenceConvergenceTriangleSineCos}
and
\autoref{tab:DivergenceConvergenceTriangleSineCosPlusVortex}.
The same observation as for Cartesian meshes hold. 

\begin{figure}
  \begin{center}
    \begin{tabular}{r@{\qquad}l}
      \includegraphics[width=6cm]{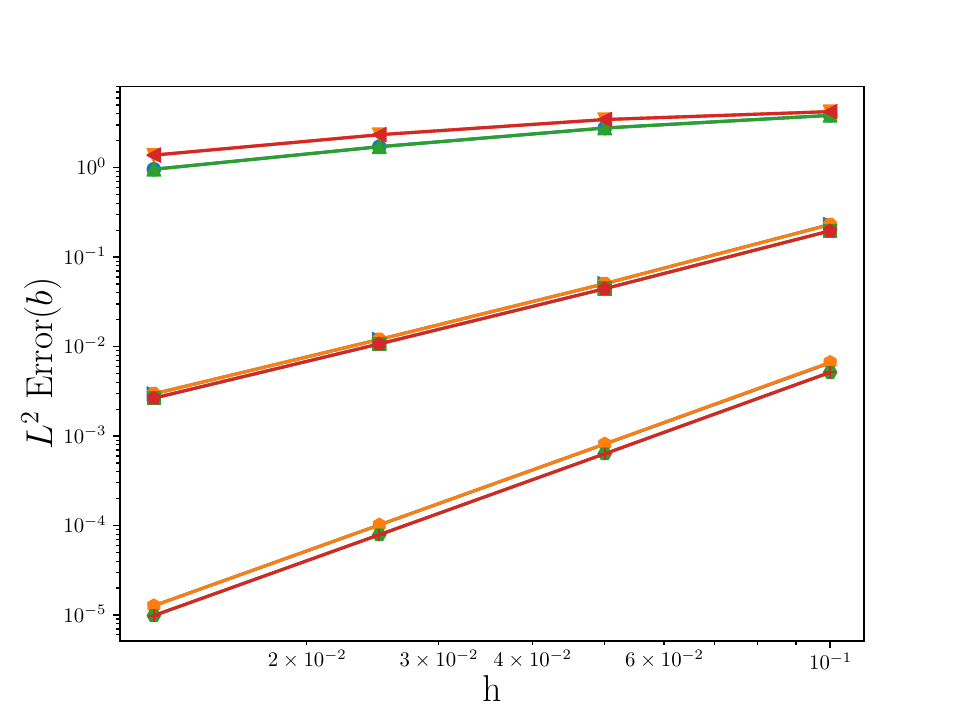}
      &
      \includegraphics[width=6cm]{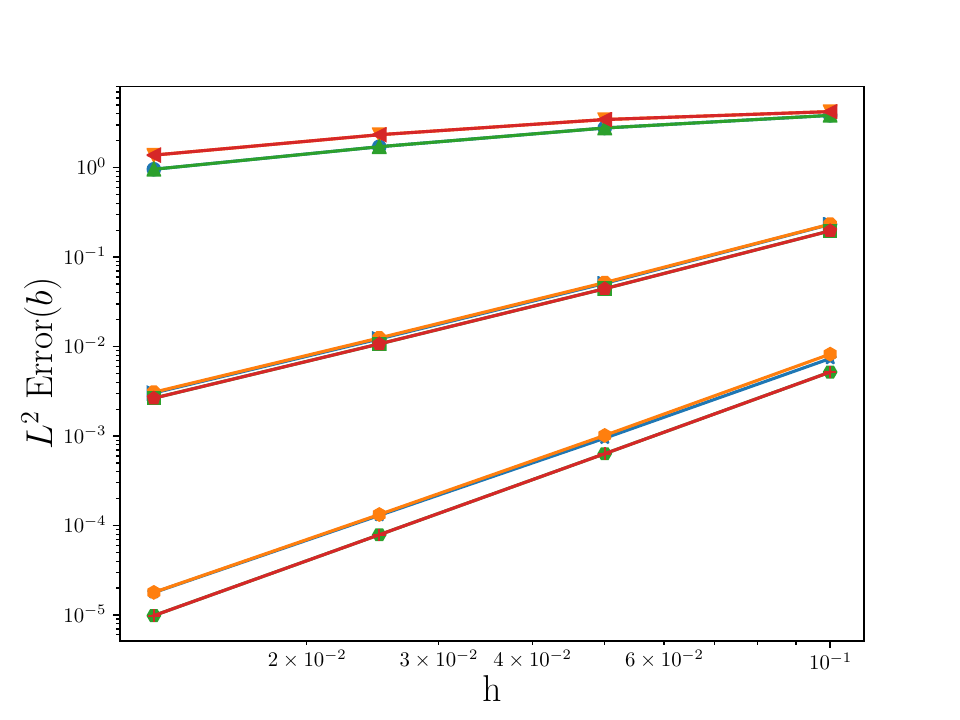} \\
      \includegraphics[width=6cm]{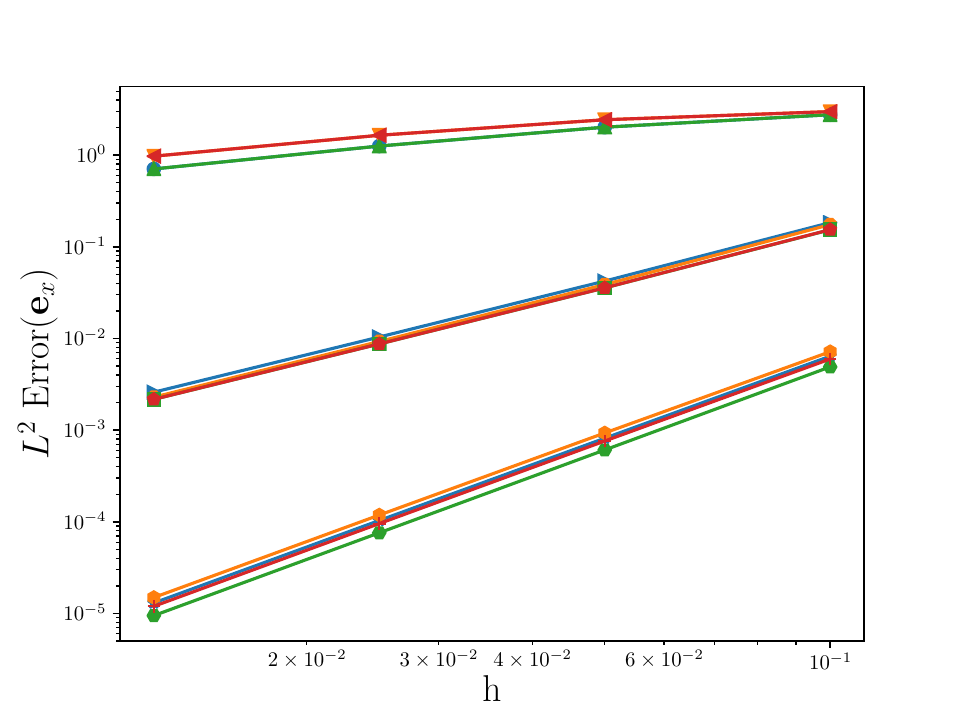}
      &
      \includegraphics[width=6cm]{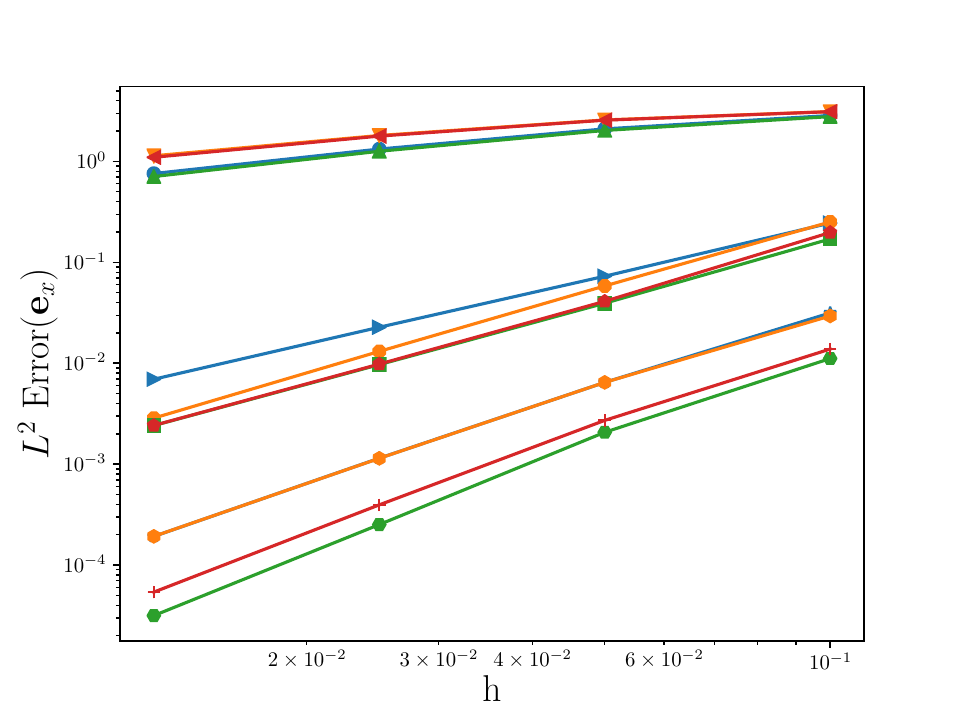} \\
      \includegraphics[width=6cm]{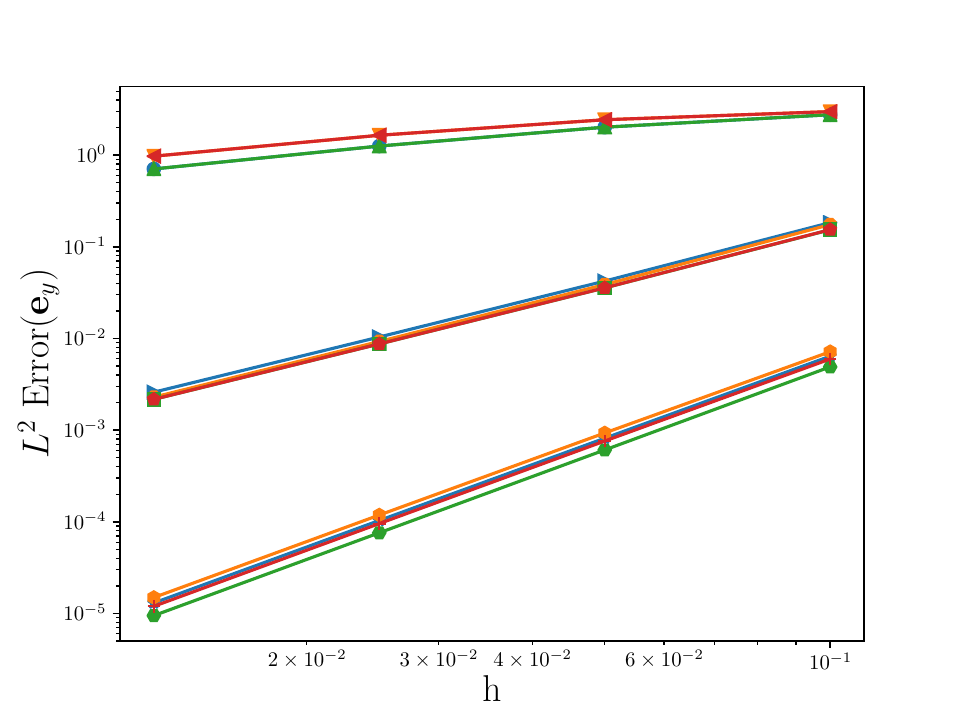}
      &
      \includegraphics[width=6cm]{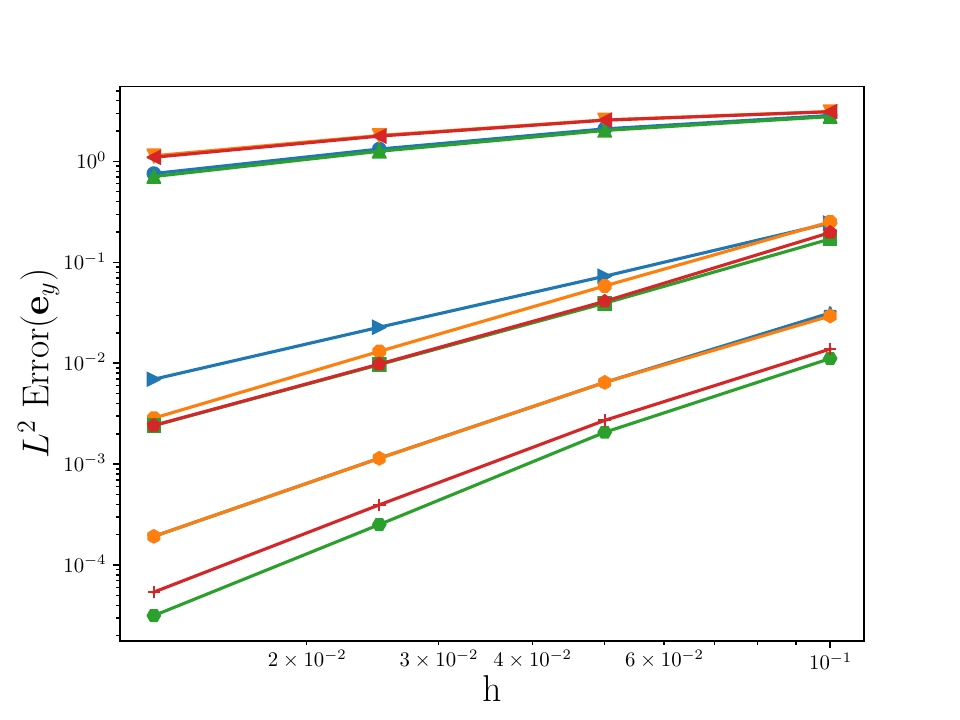} \\
      \multicolumn{2}{c}{
        \includegraphics[width=8cm]{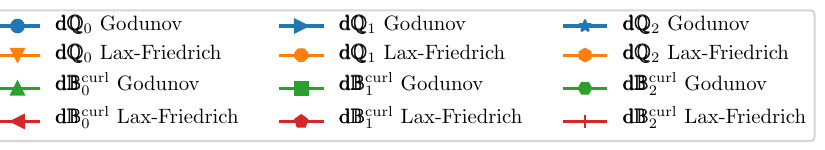}
      }
      \\
    \end{tabular}
  \end{center}
  \caption{\label{fig:DivergenceConvergenceQuad} Error obtained
    on the test case described in
    \autoref{subsubsec:ConvergenceDivergence} 
    with initial condition \eqref{eq:exactDivergence} on the left, and
    with the initial condition \eqref{eq:DivergenceAddVortex} added to
    \eqref{eq:exactDivergence} on the right, on a series of
    Cartesian meshes. For each of the test cases,
    the error obtained for the variables $b$ (top row),
    $\be_x$ (middle row) and $\be_y$ (bottom row)
    is shown for different
    approximation spaces and for the Lax-Friedrich and Godunov flux. 
  }
\end{figure}

\begin{table}
  \begin{center}
\begin{tabular}{||c||c|c||c|c||c|c||c|c||}\multicolumn{1}{c}{}& \multicolumn{4}{c}{Godunov}& \multicolumn{4}{c}{Lax-Friedrich}\\ \hline\multicolumn{1}{||c||}{}& \multicolumn{2}{|c||}{$\bdQQ_0$}& \multicolumn{2}{|c||}{$\Bcurl_0$}& \multicolumn{2}{|c||}{$\bdQQ_0$}& \multicolumn{2}{|c||}{$\Bcurl_0$}\\ \hline$h$  & Error & rate  & Error & rate  & Error & rate  & Error & rate \\ \hline0.1 & 2.76e+00 &  & 2.76e+00 &  & 2.99e+00 &  & 2.99e+00 &  \\ 0.05 & 2.02e+00  & 0.45 & 2.02e+00  & 0.45 & 2.44e+00  & 0.30 & 2.44e+00  & 0.30\\ 0.025 & 1.26e+00  & 0.68 & 1.26e+00  & 0.68 & 1.65e+00  & 0.56 & 1.65e+00  & 0.56\\ 0.0125 & 7.08e-01  & 0.83 & 7.08e-01  & 0.83 & 9.76e-01  & 0.76 & 9.76e-01  & 0.76\\ \hline \multicolumn{1}{c}{}& \multicolumn{4}{c}{Godunov}& \multicolumn{4}{c}{Lax-Friedrich}\\ \hline\multicolumn{1}{||c||}{}& \multicolumn{2}{|c||}{$\bdQQ_1$}& \multicolumn{2}{|c||}{$\Bcurl_1$}& \multicolumn{2}{|c||}{$\bdQQ_1$}& \multicolumn{2}{|c||}{$\Bcurl_1$}\\ \hline$h$  & Error & rate  & Error & rate  & Error & rate  & Error & rate \\ \hline0.1 & 1.84e-01 &  & 1.54e-01 &  & 1.75e-01 &  & 1.54e-01 &  \\ 0.05 & 4.23e-02  & 2.12 & 3.57e-02  & 2.11 & 3.88e-02  & 2.17 & 3.57e-02  & 2.11\\ 0.025 & 1.04e-02  & 2.03 & 8.72e-03  & 2.03 & 9.28e-03  & 2.06 & 8.72e-03  & 2.03\\ 0.0125 & 2.60e-03  & 2.00 & 2.17e-03  & 2.01 & 2.29e-03  & 2.02 & 2.17e-03  & 2.01\\ \hline \multicolumn{1}{c}{}& \multicolumn{4}{c}{Godunov}& \multicolumn{4}{c}{Lax-Friedrich}\\ \hline\multicolumn{1}{||c||}{}& \multicolumn{2}{|c||}{$\bdQQ_2$}& \multicolumn{2}{|c||}{$\Bcurl_2$}& \multicolumn{2}{|c||}{$\bdQQ_2$}& \multicolumn{2}{|c||}{$\Bcurl_2$}\\ \hline$h$  & Error & rate  & Error & rate  & Error & rate  & Error & rate \\ \hline0.1 & 6.41e-03 &  & 4.90e-03 &  & 7.17e-03 &  & 5.98e-03 &  \\ 0.05 & 8.16e-04  & 2.97 & 6.08e-04  & 3.01 & 9.32e-04  & 2.94 & 7.61e-04  & 2.97\\ 0.025 & 1.03e-04  & 2.98 & 7.59e-05  & 3.00 & 1.19e-04  & 2.97 & 9.57e-05  & 2.99\\ 0.0125 & 1.30e-05  & 2.99 & 9.48e-06  & 3.00 & 1.49e-05  & 2.99 & 1.20e-05  & 3.00\\ \hline \end{tabular}

  \end{center}
  \caption{\label{tab:DivergenceConvergenceQuadSineCos} Errors and
    convergence rates obtained on the variable $\be_x$
    with the test case described in \autoref{subsubsec:ConvergenceDivergence}
    with initial condition
    \eqref{eq:exactDivergence}, on a series of quadrangular meshes.
    Results show a low benefit in using the Godunov flux, namely in
    exactly preserving the divergence.}
\end{table}

\begin{table}
  \begin{center}
\begin{tabular}{||c||c|c||c|c||c|c||c|c||}\multicolumn{1}{c}{}& \multicolumn{4}{c}{Godunov}& \multicolumn{4}{c}{Lax-Friedrich}\\ \hline\multicolumn{1}{||c||}{}& \multicolumn{2}{|c||}{$\bdQQ_0$}& \multicolumn{2}{|c||}{$\Bcurl_0$}& \multicolumn{2}{|c||}{$\bdQQ_0$}& \multicolumn{2}{|c||}{$\Bcurl_0$}\\ \hline$h$  & Error & rate  & Error & rate  & Error & rate  & Error & rate \\ \hline0.1 & 2.85e+00 &  & 2.78e+00 &  & 3.12e+00 &  & 3.11e+00 &  \\ 0.05 & 2.10e+00  & 0.44 & 2.03e+00  & 0.46 & 2.58e+00  & 0.27 & 2.56e+00  & 0.28\\ 0.025 & 1.33e+00  & 0.66 & 1.26e+00  & 0.69 & 1.81e+00  & 0.51 & 1.78e+00  & 0.52\\ 0.0125 & 7.59e-01  & 0.81 & 7.09e-01  & 0.83 & 1.14e+00  & 0.67 & 1.10e+00  & 0.70\\ \hline \multicolumn{1}{c}{}& \multicolumn{4}{c}{Godunov}& \multicolumn{4}{c}{Lax-Friedrich}\\ \hline\multicolumn{1}{||c||}{}& \multicolumn{2}{|c||}{$\bdQQ_1$}& \multicolumn{2}{|c||}{$\Bcurl_1$}& \multicolumn{2}{|c||}{$\bdQQ_1$}& \multicolumn{2}{|c||}{$\Bcurl_1$}\\ \hline$h$  & Error & rate  & Error & rate  & Error & rate  & Error & rate \\ \hline0.1 & 2.45e-01 &  & 1.71e-01 &  & 2.52e-01 &  & 1.98e-01 &  \\ 0.05 & 7.29e-02  & 1.75 & 3.94e-02  & 2.12 & 5.84e-02  & 2.11 & 4.11e-02  & 2.26\\ 0.025 & 2.28e-02  & 1.68 & 9.70e-03  & 2.02 & 1.31e-02  & 2.15 & 9.83e-03  & 2.07\\ 0.0125 & 6.96e-03  & 1.71 & 2.42e-03  & 2.00 & 2.87e-03  & 2.20 & 2.43e-03  & 2.02\\ \hline \multicolumn{1}{c}{}& \multicolumn{4}{c}{Godunov}& \multicolumn{4}{c}{Lax-Friedrich}\\ \hline\multicolumn{1}{||c||}{}& \multicolumn{2}{|c||}{$\bdQQ_2$}& \multicolumn{2}{|c||}{$\Bcurl_2$}& \multicolumn{2}{|c||}{$\bdQQ_2$}& \multicolumn{2}{|c||}{$\Bcurl_2$}\\ \hline$h$  & Error & rate  & Error & rate  & Error & rate  & Error & rate \\ \hline0.1 & 3.15e-02 &  & 1.12e-02 &  & 2.94e-02 &  & 1.38e-02 &  \\ 0.05 & 6.47e-03  & 2.29 & 2.07e-03  & 2.43 & 6.48e-03  & 2.18 & 2.72e-03  & 2.35\\ 0.025 & 1.15e-03  & 2.50 & 2.53e-04  & 3.04 & 1.14e-03  & 2.50 & 3.96e-04  & 2.78\\ 0.0125 & 1.93e-04  & 2.57 & 3.17e-05  & 3.00 & 1.93e-04  & 2.57 & 5.41e-05  & 2.87\\ \hline \end{tabular}

  \end{center}
  \caption{\label{tab:DivergenceConvergenceQuadSineCosPlusVortex} Errors and
    convergence rates obtained on the variable $\be_x$
    with the test case described in \autoref{subsubsec:ConvergenceDivergence}
    with initial condition
    \eqref{eq:exactDivergence}, on a series of Cartesian meshes.
    Results show a high benefit in using
    the space $\Bcurl_k$ with the Godunov flux, which is the only one
    to always reach the optimal order.}
\end{table}

\begin{figure}
  \begin{center}
    \begin{tabular}{r@{\qquad}l}
      \includegraphics[width=6cm]{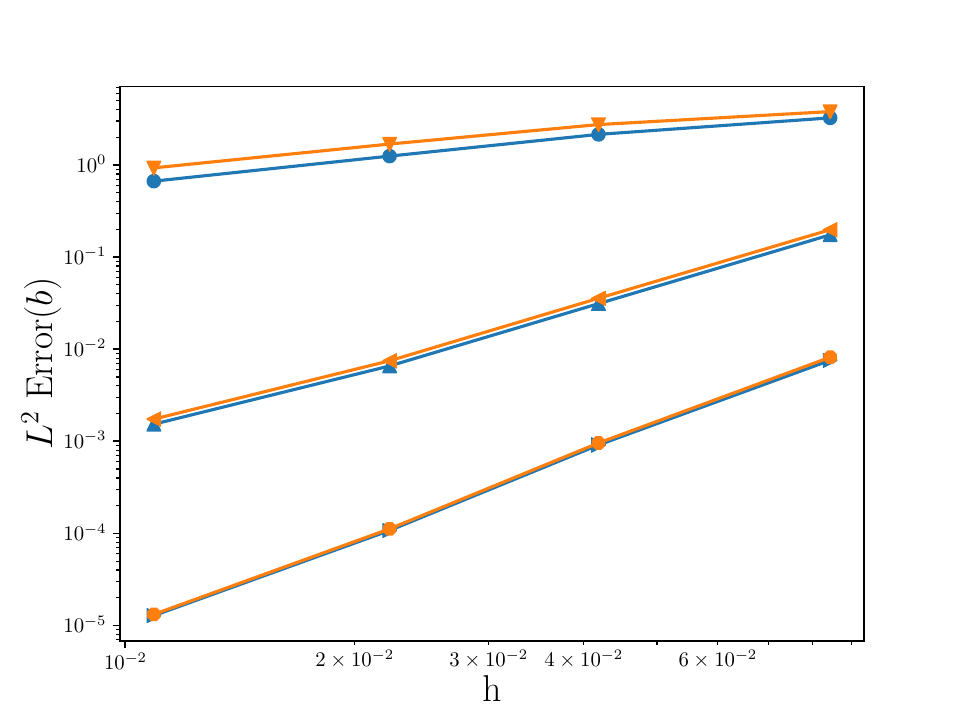}
      &
      \includegraphics[width=6cm]{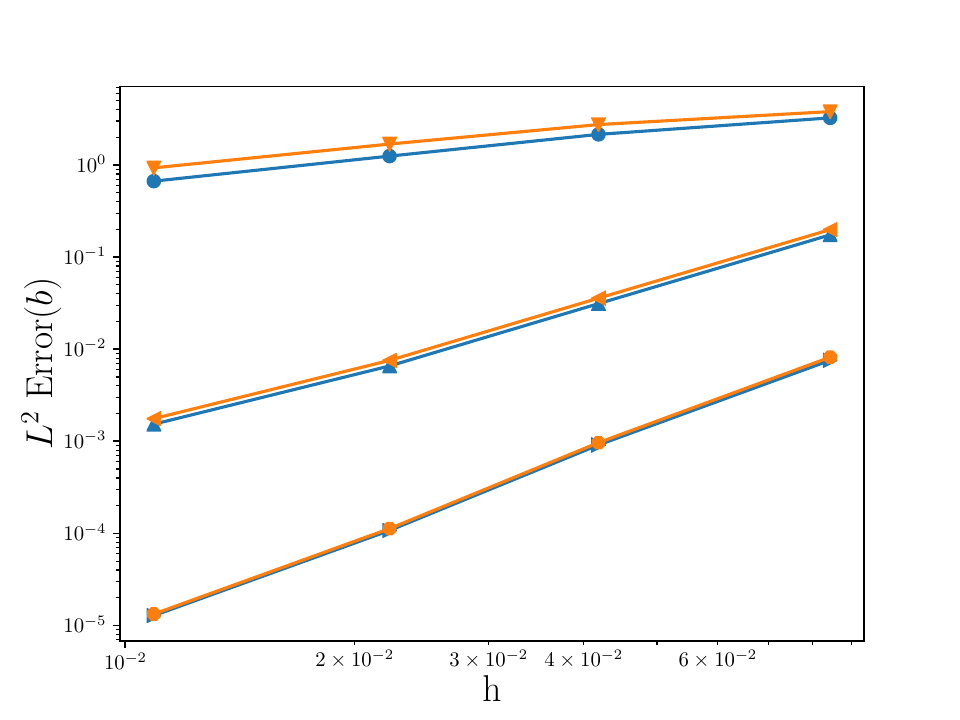} \\
      \includegraphics[width=6cm]{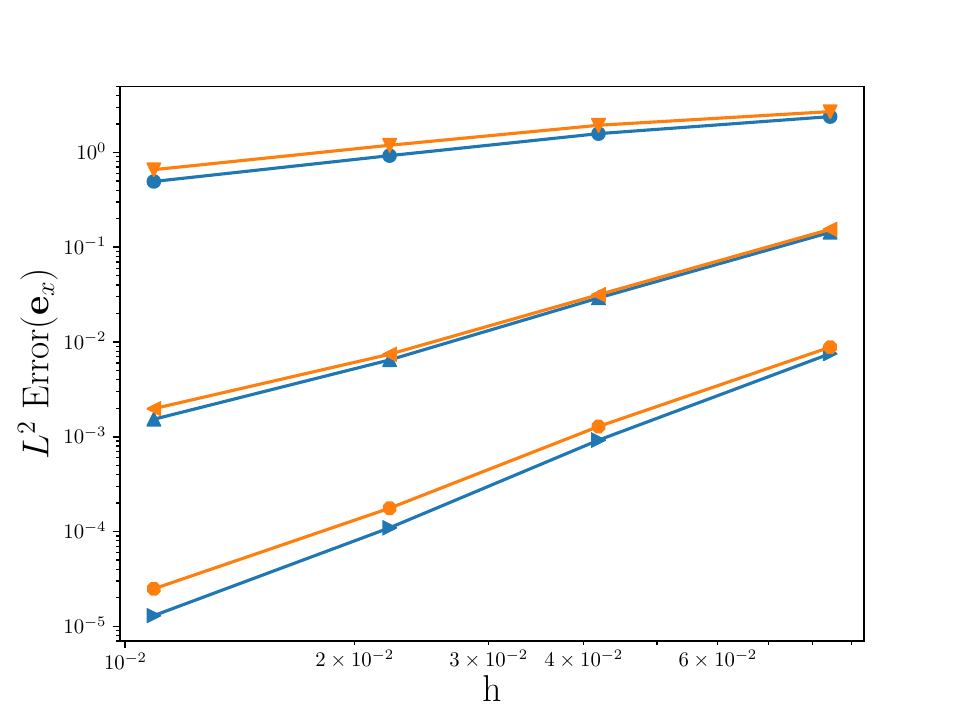}
      &
      \includegraphics[width=6cm]{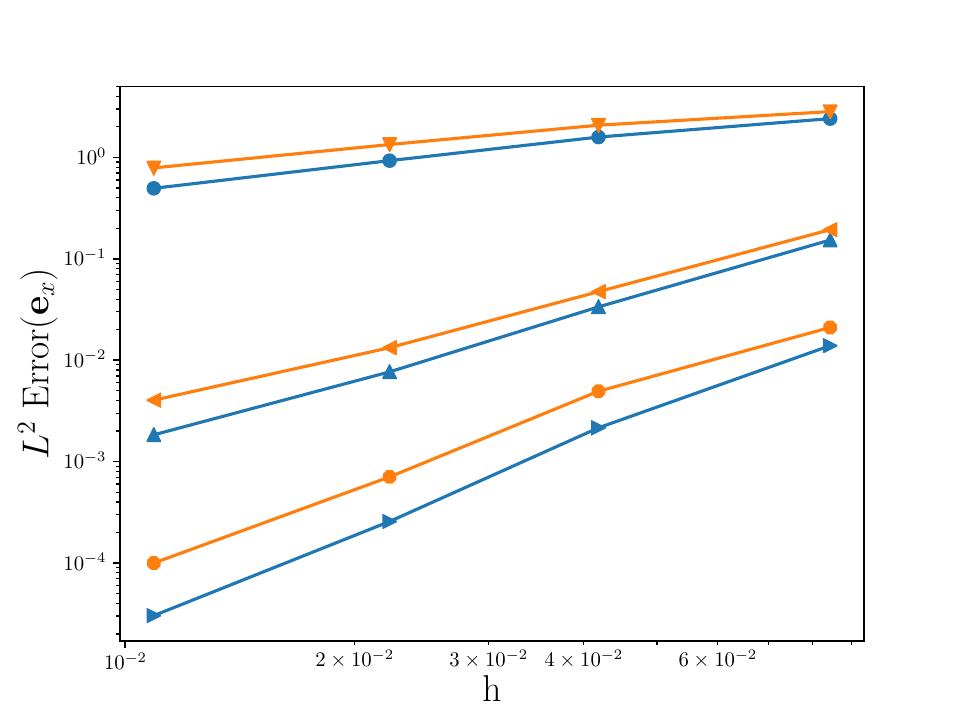} \\
      \includegraphics[width=6cm]{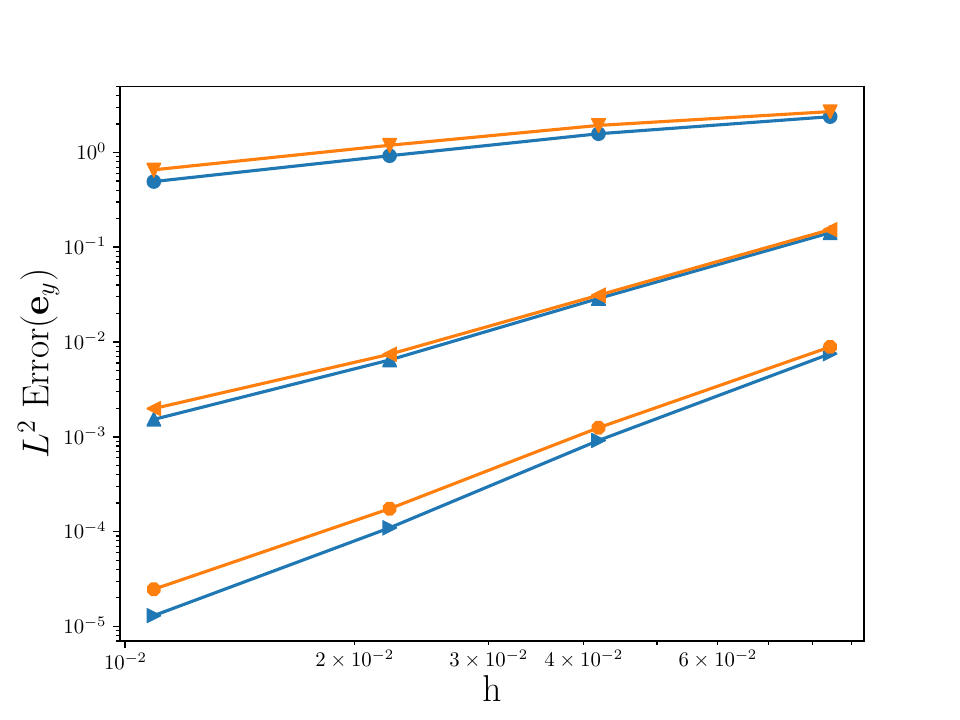}
      &
      \includegraphics[width=6cm]{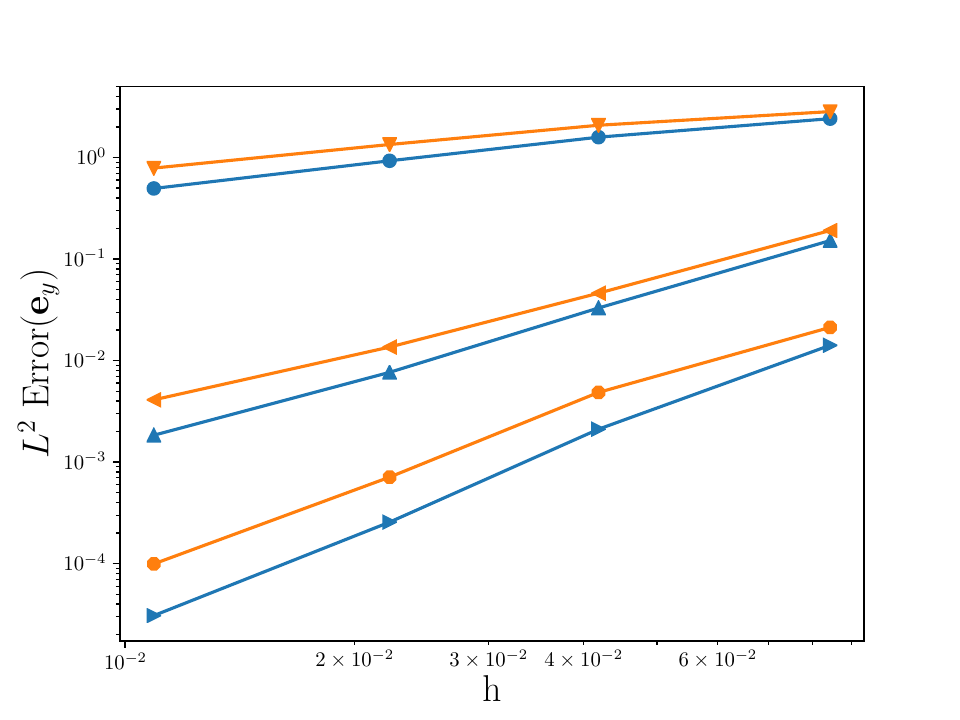} \\
      \multicolumn{2}{c}{
        \includegraphics[width=8cm]{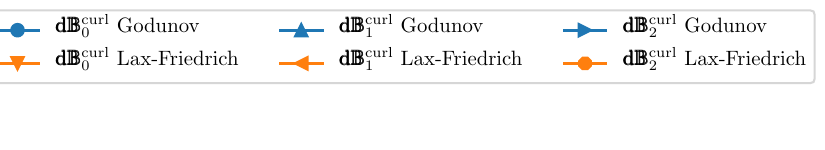}
      }
      \\
    \end{tabular}
  \end{center}
  \caption{\label{fig:DivergenceConvergenceTriangle} Error obtained
    on the test case described in
    \autoref{subsubsec:ConvergenceDivergence} 
    with initial condition \eqref{eq:exactDivergence} on the left, and
    with the initial condition \eqref{eq:DivergenceAddVortex} added to
    \eqref{eq:exactDivergence} on the right, on a series of
    unstructured triangular meshes. For each of the test cases,
    the error obtained for the variables $b$ (top row),
    $\be_x$ (middle row) and $\be_y$ (bottom row)
    is shown for different degrees, and for
    the Lax-Friedrich and Godunov numerical flux. 
  }
\end{figure}

\begin{table}
  \begin{center}
\begin{tabular}{||c||c|c||c|c||}
\hline 
\multicolumn{1}{||c||}{}
& \multicolumn{2}{|c||}{Godunov}
& \multicolumn{2}{|c||}{Lax-Friedrich}
\\ \hline
$h$ 
 & Error & rate 
 & Error & rate 
\\ \hline
0.08439823 & 2.39e+00 & 
 & 2.70e+00 & 
 \\ 
0.04187874 & 1.58e+00  & 0.59 & 1.93e+00  & 0.47\\ 
0.02226197 & 9.25e-01  & 0.85 & 1.19e+00  & 0.77\\ 
0.01091023 & 4.94e-01  & 0.88 & 6.58e-01  & 0.83\\ \hline 
\multicolumn{1}{||c||}{}
& \multicolumn{2}{|c||}{Godunov}
& \multicolumn{2}{|c||}{Lax-Friedrich}
\\ \hline
$h$ 
 & Error & rate 
 & Error & rate 
\\ \hline
0.08439823 & 1.43e-01 & 
 & 1.54e-01 & 
 \\ 
0.04187874 & 2.92e-02  & 2.27 & 3.15e-02  & 2.27\\ 
0.02226197 & 6.47e-03  & 2.38 & 7.43e-03  & 2.29\\ 
0.01091023 & 1.53e-03  & 2.02 & 1.98e-03  & 1.85\\ \hline 
\multicolumn{1}{||c||}{}
& \multicolumn{2}{|c||}{Godunov}
& \multicolumn{2}{|c||}{Lax-Friedrich}
\\ \hline
$h$ 
 & Error & rate 
 & Error & rate 
\\ \hline
0.08439823 & 7.53e-03 & 
 & 8.83e-03 & 
 \\ 
0.04187874 & 9.22e-04  & 3.00 & 1.28e-03  & 2.75\\ 
0.02226197 & 1.10e-04  & 3.37 & 1.76e-04  & 3.14\\ 
0.01091023 & 1.30e-05  & 2.99 & 2.49e-05  & 2.75\\ \hline 
\end{tabular}

  \end{center}
  \caption{\label{tab:DivergenceConvergenceTriangleSineCos} Errors and
    convergence rates obtained on the variable $\be_x$
    with the test case described in \autoref{subsubsec:ConvergenceDivergence}
    with initial condition
    \eqref{eq:exactDivergence}, on the series of triangular meshes.
    Results show a low benefit in using the Godunov flux, namely in
    exactly preserving the divergence.}
\end{table}

\begin{table}
  \begin{center}
\begin{tabular}{||c||c|c||c|c||}
\hline 
\multicolumn{1}{||c||}{}
& \multicolumn{2}{|c||}{Godunov}
& \multicolumn{2}{|c||}{Lax-Friedrich}
\\ \hline
$h$ 
 & Error & rate 
 & Error & rate 
\\ \hline
0.08439823 & 2.41e+00 & 
 & 2.83e+00 & 
 \\ 
0.04187874 & 1.59e+00  & 0.60 & 2.08e+00  & 0.44\\ 
0.02226197 & 9.28e-01  & 0.85 & 1.34e+00  & 0.70\\ 
0.01091023 & 4.95e-01  & 0.88 & 7.88e-01  & 0.74\\ \hline 
\multicolumn{1}{||c||}{}
& \multicolumn{2}{|c||}{Godunov}
& \multicolumn{2}{|c||}{Lax-Friedrich}
\\ \hline
$h$ 
 & Error & rate 
 & Error & rate 
\\ \hline
0.08439823 & 1.53e-01 & 
 & 1.94e-01 & 
 \\ 
0.04187874 & 3.36e-02  & 2.17 & 4.74e-02  & 2.01\\ 
0.02226197 & 7.67e-03  & 2.34 & 1.33e-02  & 2.01\\ 
0.01091023 & 1.84e-03  & 2.00 & 4.04e-03  & 1.67\\ \hline 
\multicolumn{1}{||c||}{}
& \multicolumn{2}{|c||}{Godunov}
& \multicolumn{2}{|c||}{Lax-Friedrich}
\\ \hline
$h$ 
 & Error & rate 
 & Error & rate 
\\ \hline
0.08439823 & 1.39e-02 & 
 & 2.10e-02 & 
 \\ 
0.04187874 & 2.15e-03  & 2.66 & 4.94e-03  & 2.07\\ 
0.02226197 & 2.57e-04  & 3.37 & 7.07e-04  & 3.08\\ 
0.01091023 & 3.03e-05  & 3.00 & 9.99e-05  & 2.74\\ \hline 
\end{tabular}

  \end{center}
  \caption{\label{tab:DivergenceConvergenceTriangleSineCosPlusVortex} Errors and
    convergence rates obtained on the variable $\be_x$
    with the test case described in \autoref{subsubsec:ConvergenceDivergence}
    with initial condition
    \eqref{eq:exactDivergence}, on the series of triangular meshes.
    Results show a higher benefit than in
    \autoref{tab:DivergenceConvergenceTriangleSineCos} 
    in using the Godunov flux for preserving exactly the divergence.}
\end{table}

\subsection{Discrete conservation of a curl: Wave system}

In this section, the wave system
\begin{equation}
  \label{eq:Waves}
  \left\{
  \begin{array}{l}
    \partial_t p + \nabla \cdot \bu = 0 \\
    \partial_t \bu + c^2 \nabla p = 0,
  \end{array}
  \right.
\end{equation}
which couples the pressure $p$ and the velocity $\bu$ is considered. The
wave velocity $c$ is a parameter of the system, and will be always equal
to $1$ in the numerical applications. 

\subsubsection{Conservation of the curl of a stationary solution}
\label{subsubsec:ConservationCurl}

For this first test case, the following initial condition is imposed
$$
\left\{
\begin{array}{r@{\, = \, }l}
  p(\bx) & 0\\
  \bu_x (\bx) & - \overline{y} \, \ex{-\overline{r}^2 / 2}\\
  \bu_y (\bx) & \overline{x} \, \ex{-\overline{r}^2 / 2},
\end{array}
\right.
$$
with $\overline{x} = \dfrac{x-x_c}{r_0}$,
$\overline{y} = \dfrac{y-y_c}{r_0}$, $r^2 = (x-x_c)^2+(y-y_c)^2$, and
$\overline{r} = \dfrac{r}{r_0}$. This initial condition is such that
$\nabla \cdot \bu = 0$, and so is clearly a stationary solution
of \eqref{eq:Waves}. This solution was built by considering the
potential $\ex{-\overline{r}^2/2}$
and by taking its rotated gradient $\nablaperp$.
The numerical parameters are $r_0 = 0.15$, and 
$x_c = y_c = 0.5$. The computation is led until $t=3$ on the
different meshes represented in \autoref{fig:Meshes}, and The $L^2$
difference between $\left( \nablaperp \right)^\star \bu$
and its initial value is computed along the time.
Two configurations of finite element spaces for approximating
$\bu$ are used:
\begin{itemize}
\item the finite element spaces $\Bdiv_k$,
\item the classical finite element spaces for discontinuous Galerkin
  methods obtained by tensorization of the scalar basis (note however that
  in the triangular case, these two approximation spaces match),
\end{itemize}
and with two different numerical flux:
\begin{itemize}
\item the Lax-Friedrich flux,
\item the Lax-Friedrich flux with purely normal diffusion
  \eqref{eq:LaxFriedrichNormal}, which matches in our case with
  the Godunov' flux.
\end{itemize}
Numerical results are represented in
\autoref{fig:StationaryCurlConservation} for degree $k=0,1,2$, and
show that the only combination that preserves correctly the curl is the
one with the finite element space $\Bdiv_k$ and with the Godunov'
numerical flux.

\begin{figure}
  \begin{center}
    \includegraphics[width=14.5cm]{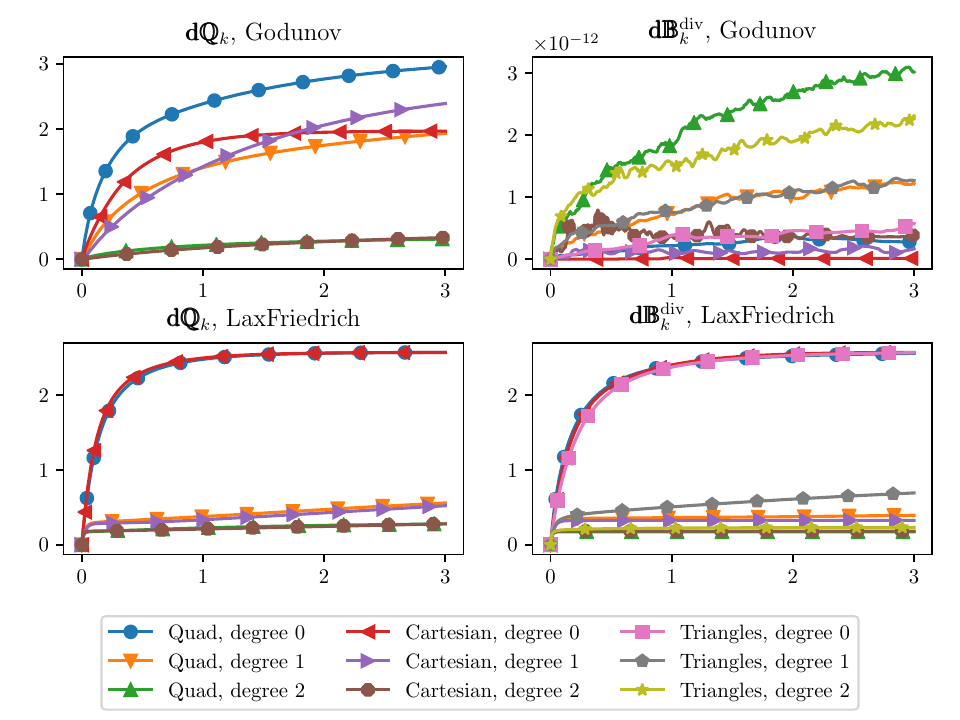}
  \end{center}
  \caption{ \label{fig:StationaryCurlConservation}
    Plot of $\norme{\left( \nablaperp \right)^\star \bu
      - \left( \nablaperp \right) ^\star \bu_0}_2$ with respect to
    time for different degree, approximation space,
    type of meshes and numerical flux. In the
    left column, Cartesian and unstructured quadrangular meshes are
    considered, with the $\bdQQ_k$ approximation space with the Godunov
    (top figure) and Lax-Friedrich (bottom figure) numerical flux.
    In the right column,
    Cartesian, unstructured quadrangular and triangular meshes are
    considered, with the $\Bdiv_k$ approximation space with the Godunov
    (top figure) and Lax-Friedrich (bottom figure) numerical flux.
    On triangular meshes, the space $\Bdiv_k$ and $\bdPP_k$ are the same, and
    the computations on these spaces are represented only
    on the right column. Note that
    the $y$ scaling of the top right figure ($\Bdiv_k$ with Godunov' scheme)
    is $10^{-12}$, whereas it is of the order of $1$ for the other plots. 
  }
\end{figure}

\subsubsection{Convergence test}
\label{subsubsec:ConvergenceCurl}

The numerical test is adapted from the one for the divergence convergence
test case of \autoref{subsubsec:ConvergenceDivergence}. 
The initial condition is
\begin{equation}
  \label{eq:exactCurl}
  \left\{
  \begin{array}{r@{\, = \, }l}
    p (\bx) & \dfrac{\omega}{c^2}
    \sin \left( k_{\perp} \pi y \right)
    \cos \left( k_\parallel \pi x \right)
    \\
    \bu_x  (\bx) &  k_{\parallel} \pi \, \cos \left( k_{\perp} \pi y \right)
    \sin \left( k_\parallel \pi x \right)\\
    \bu_y  (\bx) & k_{\perp} \pi 
    \sin \left( k_{\perp} \pi y \right)
    \cos \left( k_\parallel \pi x \right)
    \\
  \end{array}
  \right.
\end{equation}
and the exact solution is
$$
\left\{
\begin{array}{r@{\, = \, }l}
  p (\bx) & \dfrac{\omega}{c^2}
  \sin \left( k_{\perp} \pi y - \omega t \right)
  \cos \left( k_\parallel \pi x \right)
  \\
  \bu_x  (\bx) &  k_{\parallel} \pi \, \cos \left( k_{\perp} \pi y - \omega t \right)
  \sin \left( k_\parallel \pi x \right)\\
  \bu_y  (\bx) & k_{\perp} \pi 
  \sin \left( k_{\perp} \pi y - \omega t \right)
  \cos \left( k_\parallel \pi x \right).
  \\
\end{array}
\right.
$$
The longitudinal and transverse wave numbers $k_\parallel$ and $k_\perp$ are
data of the test case. The frequency $\omega$ is such that
$$k_\parallel ^2 + k_\perp ^2 = \dfrac{\omega^2}{\pi^2 c^2}.$$
The numerical parameters are $c=1$, and 
$k_\parallel = k_\perp = 2$.

A second test case is performed, in which a stationary non curl free
solution is added to the initial solution \eqref{eq:exactCurl}
(it is similar to the stationary solution of
\autoref{subsubsec:ConservationCurl}, but regular),
equal to
\begin{equation}
  \label{eq:CurlAddVortex}
  \left\{
  \begin{array}{r@{\, = \, }l}
    b(\bx) & 0\\
    \bu_x (\bx) & - 2 K_0 \, \alpha \, \overline{y} \, \dfrac{\ex{-\alpha/(1-\overline{r}^2)}}{(1-\overline{r}^2)^2}\\
    \bu_y (\bx) & 2 K_0 \, \alpha \, \overline{x} \, \dfrac{\ex{-\alpha/(1-\overline{r}^2)}}{(1-\overline{r}^2)^2},
  \end{array}
  \right.
\end{equation}
if $r < r_0$, and $0$ otherwise. The numerical parameters are
$K_0 = 100$, $r_0 = 0.35$, $x_c = y_c = 0.5$, $\alpha=4$. 
The convergence curve for this test case with the two initial conditions
are represented in \autoref{fig:CurlConvergenceQuad} for
Cartesian meshes, and the matching  table
with the convergence errors and rates are summarized in 
\autoref{tab:CurlConvergenceQuadSineCos} and
\autoref{tab:CurlConvergenceQuadSineCosPlusVortex}.
For triangular meshes, the convergence curves are plotted in 
\autoref{fig:CurlConvergenceTriangle}, and the errors and convergence rates
are summarized in the \autoref{tab:CurlConvergenceTriangleSineCos}
and \autoref{tab:CurlConvergenceTriangleSineCosPlusVortex}.
Same comments as for the convergence test case of the previous section
hold. 

\begin{figure}
  \begin{center}
    \begin{tabular}{r@{\qquad}l}
      \includegraphics[width=6cm]{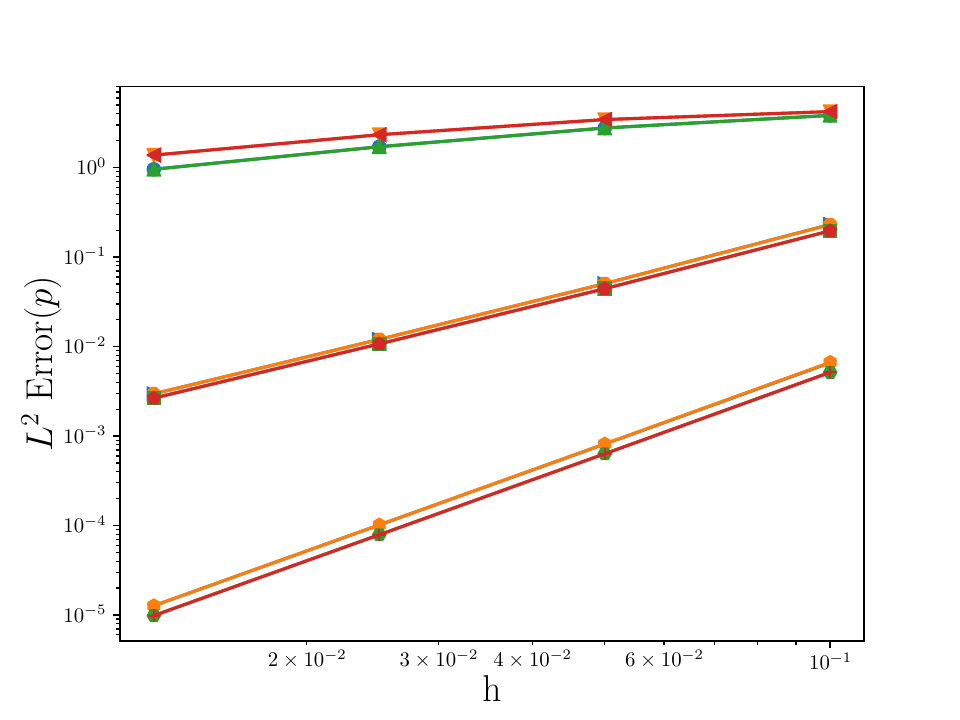}
      &
      \includegraphics[width=6cm]{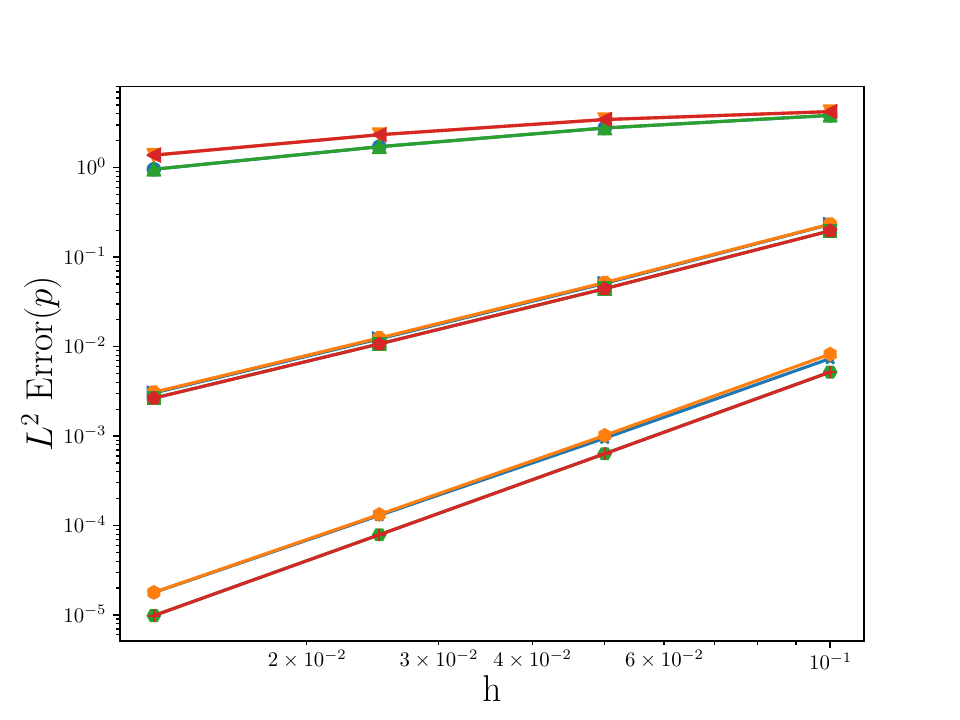} \\
      \includegraphics[width=6cm]{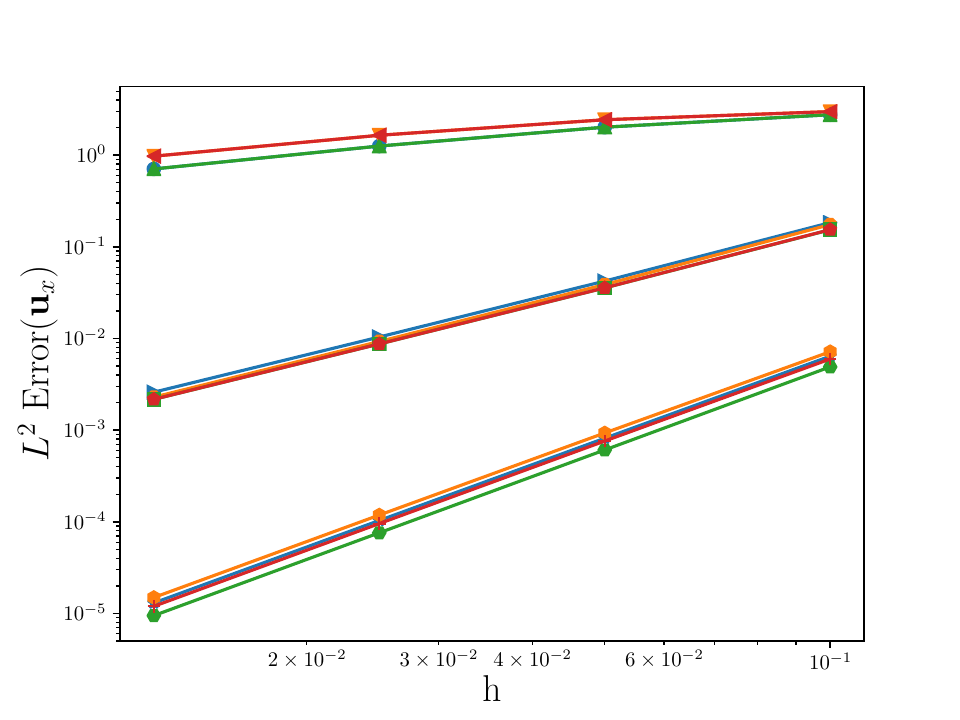}
      &
      \includegraphics[width=6cm]{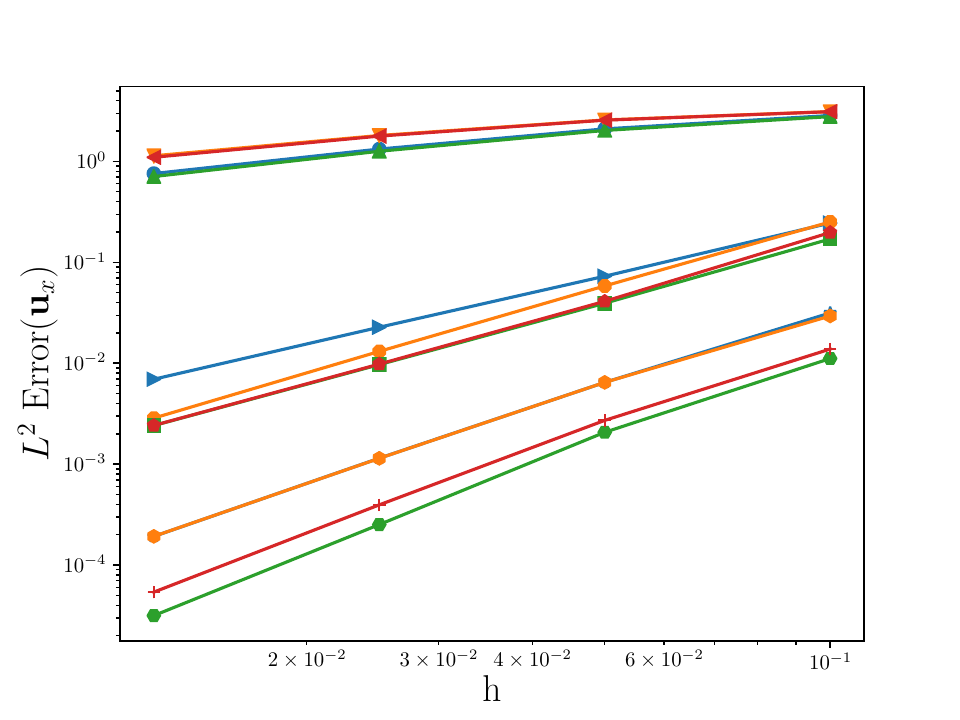} \\
      \includegraphics[width=6cm]{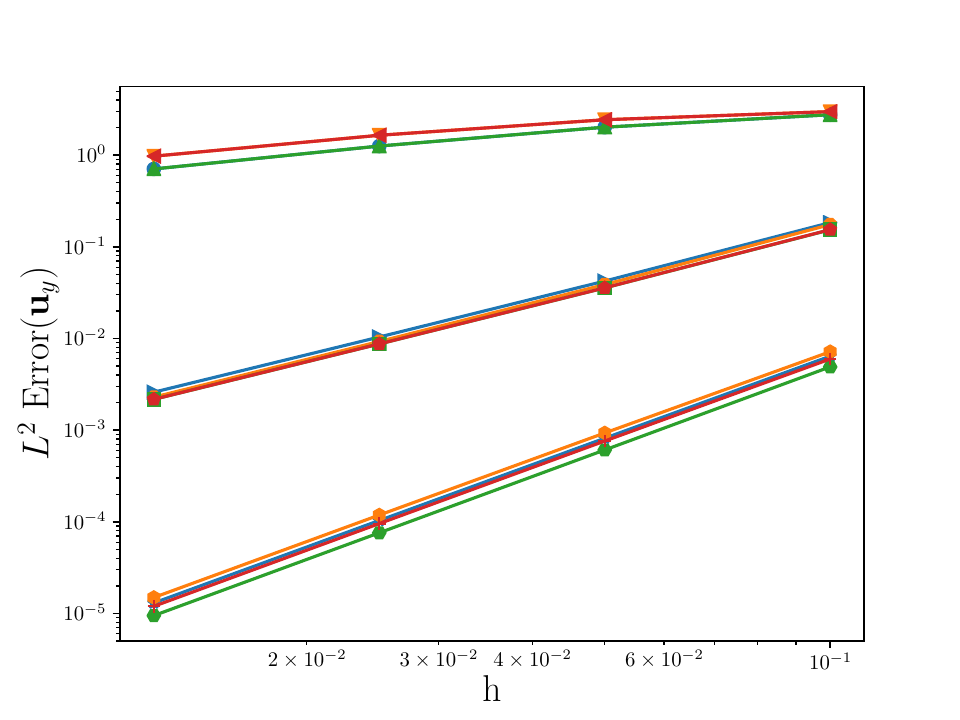}
      &
      \includegraphics[width=6cm]{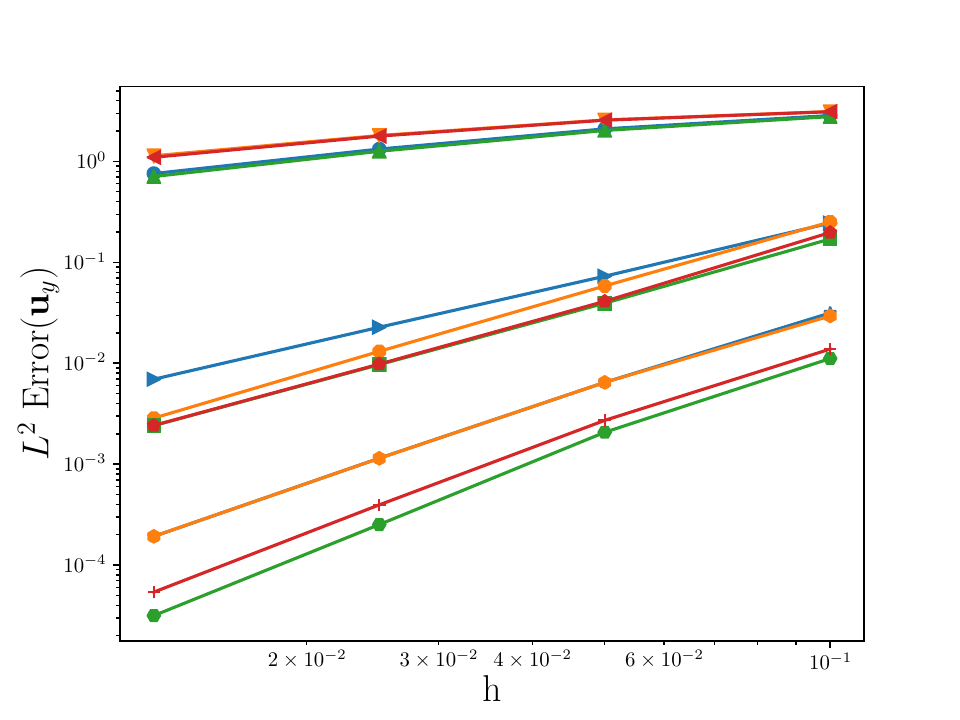} \\
      \multicolumn{2}{c}{
        \includegraphics[width=8cm]{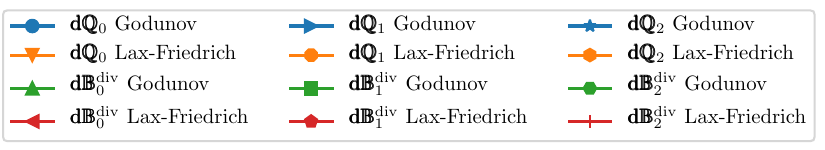}
      }
      \\
    \end{tabular}
  \end{center}
  \caption{\label{fig:CurlConvergenceQuad} Error obtained
    on the test case described in
    \autoref{subsubsec:ConvergenceCurl} 
    with initial condition \eqref{eq:exactCurl} on the left, and
    with the initial condition \eqref{eq:CurlAddVortex} added to
    \eqref{eq:exactCurl} on the right, on a series of
    Cartesian meshes. For each of the test cases,
    the error obtained for the variables $p$ (top row),
    $\bu_x$ (middle row) and $\bu_y$ (bottom row)
    is shown for different
    approximation spaces and for the Lax-Friedrich and Godunov flux. 
  }
\end{figure}

\begin{table}
  \begin{center}
\begin{tabular}{||c||c|c||c|c||c|c||c|c||}\multicolumn{1}{c}{}& \multicolumn{4}{c}{Godunov}& \multicolumn{4}{c}{Lax-Friedrich}\\ \hline\multicolumn{1}{||c||}{}& \multicolumn{2}{|c||}{$\bdQQ_0$}& \multicolumn{2}{|c||}{$\Bdiv_0$}& \multicolumn{2}{|c||}{$\bdQQ_0$}& \multicolumn{2}{|c||}{$\Bdiv_0$}\\ \hline$h$  & Error & rate  & Error & rate  & Error & rate  & Error & rate \\ \hline0.1 & 2.76e+00 &  & 2.76e+00 &  & 2.99e+00 &  & 2.99e+00 &  \\ 0.05 & 2.02e+00  & 0.45 & 2.02e+00  & 0.45 & 2.44e+00  & 0.30 & 2.44e+00  & 0.30\\ 0.025 & 1.26e+00  & 0.68 & 1.26e+00  & 0.68 & 1.65e+00  & 0.56 & 1.65e+00  & 0.56\\ 0.0125 & 7.08e-01  & 0.83 & 7.08e-01  & 0.83 & 9.76e-01  & 0.76 & 9.76e-01  & 0.76\\ \hline \multicolumn{1}{c}{}& \multicolumn{4}{c}{Godunov}& \multicolumn{4}{c}{Lax-Friedrich}\\ \hline\multicolumn{1}{||c||}{}& \multicolumn{2}{|c||}{$\bdQQ_1$}& \multicolumn{2}{|c||}{$\Bdiv_1$}& \multicolumn{2}{|c||}{$\bdQQ_1$}& \multicolumn{2}{|c||}{$\Bdiv_1$}\\ \hline$h$  & Error & rate  & Error & rate  & Error & rate  & Error & rate \\ \hline0.1 & 1.84e-01 &  & 1.54e-01 &  & 1.75e-01 &  & 1.54e-01 &  \\ 0.05 & 4.23e-02  & 2.12 & 3.57e-02  & 2.11 & 3.88e-02  & 2.17 & 3.57e-02  & 2.11\\ 0.025 & 1.04e-02  & 2.03 & 8.72e-03  & 2.03 & 9.28e-03  & 2.06 & 8.72e-03  & 2.03\\ 0.0125 & 2.60e-03  & 2.00 & 2.17e-03  & 2.01 & 2.29e-03  & 2.02 & 2.17e-03  & 2.01\\ \hline \multicolumn{1}{c}{}& \multicolumn{4}{c}{Godunov}& \multicolumn{4}{c}{Lax-Friedrich}\\ \hline\multicolumn{1}{||c||}{}& \multicolumn{2}{|c||}{$\bdQQ_2$}& \multicolumn{2}{|c||}{$\Bdiv_2$}& \multicolumn{2}{|c||}{$\bdQQ_2$}& \multicolumn{2}{|c||}{$\Bdiv_2$}\\ \hline$h$  & Error & rate  & Error & rate  & Error & rate  & Error & rate \\ \hline0.1 & 6.41e-03 &  & 4.90e-03 &  & 7.17e-03 &  & 5.98e-03 &  \\ 0.05 & 8.16e-04  & 2.97 & 6.08e-04  & 3.01 & 9.32e-04  & 2.94 & 7.61e-04  & 2.97\\ 0.025 & 1.03e-04  & 2.98 & 7.59e-05  & 3.00 & 1.19e-04  & 2.97 & 9.57e-05  & 2.99\\ 0.0125 & 1.30e-05  & 2.99 & 9.48e-06  & 3.00 & 1.49e-05  & 2.99 & 1.20e-05  & 3.00\\ \hline \end{tabular}

  \end{center}
  \caption{\label{tab:CurlConvergenceQuadSineCos} Errors and
    convergence rates obtained on the variable $\bu_x$
    with the test case described in \autoref{subsubsec:ConvergenceCurl}
    with initial condition
    \eqref{eq:exactCurl}, on a series of triangular meshes.
    Results show a low benefit in using the Godunov flux, namely in
    exactly preserving the divergence.}
\end{table}

\begin{table}
  \begin{center}
\begin{tabular}{||c||c|c||c|c||c|c||c|c||}\multicolumn{1}{c}{}& \multicolumn{4}{c}{Godunov}& \multicolumn{4}{c}{Lax-Friedrich}\\ \hline\multicolumn{1}{||c||}{}& \multicolumn{2}{|c||}{$\bdQQ_0$}& \multicolumn{2}{|c||}{$\Bdiv_0$}& \multicolumn{2}{|c||}{$\bdQQ_0$}& \multicolumn{2}{|c||}{$\Bdiv_0$}\\ \hline$h$  & Error & rate  & Error & rate  & Error & rate  & Error & rate \\ \hline0.1 & 2.85e+00 &  & 2.78e+00 &  & 3.12e+00 &  & 3.11e+00 &  \\ 0.05 & 2.10e+00  & 0.44 & 2.03e+00  & 0.46 & 2.58e+00  & 0.27 & 2.56e+00  & 0.28\\ 0.025 & 1.33e+00  & 0.66 & 1.26e+00  & 0.69 & 1.81e+00  & 0.51 & 1.78e+00  & 0.52\\ 0.0125 & 7.59e-01  & 0.81 & 7.09e-01  & 0.83 & 1.14e+00  & 0.67 & 1.10e+00  & 0.70\\ \hline \multicolumn{1}{c}{}& \multicolumn{4}{c}{Godunov}& \multicolumn{4}{c}{Lax-Friedrich}\\ \hline\multicolumn{1}{||c||}{}& \multicolumn{2}{|c||}{$\bdQQ_1$}& \multicolumn{2}{|c||}{$\Bdiv_1$}& \multicolumn{2}{|c||}{$\bdQQ_1$}& \multicolumn{2}{|c||}{$\Bdiv_1$}\\ \hline$h$  & Error & rate  & Error & rate  & Error & rate  & Error & rate \\ \hline0.1 & 2.45e-01 &  & 1.71e-01 &  & 2.52e-01 &  & 1.98e-01 &  \\ 0.05 & 7.29e-02  & 1.75 & 3.94e-02  & 2.12 & 5.84e-02  & 2.11 & 4.11e-02  & 2.26\\ 0.025 & 2.28e-02  & 1.68 & 9.70e-03  & 2.02 & 1.31e-02  & 2.15 & 9.83e-03  & 2.07\\ 0.0125 & 6.96e-03  & 1.71 & 2.42e-03  & 2.00 & 2.87e-03  & 2.20 & 2.43e-03  & 2.02\\ \hline \multicolumn{1}{c}{}& \multicolumn{4}{c}{Godunov}& \multicolumn{4}{c}{Lax-Friedrich}\\ \hline\multicolumn{1}{||c||}{}& \multicolumn{2}{|c||}{$\bdQQ_2$}& \multicolumn{2}{|c||}{$\Bdiv_2$}& \multicolumn{2}{|c||}{$\bdQQ_2$}& \multicolumn{2}{|c||}{$\Bdiv_2$}\\ \hline$h$  & Error & rate  & Error & rate  & Error & rate  & Error & rate \\ \hline0.1 & 3.15e-02 &  & 1.12e-02 &  & 2.94e-02 &  & 1.38e-02 &  \\ 0.05 & 6.47e-03  & 2.29 & 2.07e-03  & 2.43 & 6.48e-03  & 2.18 & 2.72e-03  & 2.35\\ 0.025 & 1.15e-03  & 2.50 & 2.53e-04  & 3.04 & 1.14e-03  & 2.50 & 3.96e-04  & 2.78\\ 0.0125 & 1.93e-04  & 2.57 & 3.17e-05  & 3.00 & 1.93e-04  & 2.57 & 5.41e-05  & 2.87\\ \hline \end{tabular}

  \end{center}
  \caption{\label{tab:CurlConvergenceQuadSineCosPlusVortex} Errors and
    convergence rates obtained on the variable $\bu_x$
    with the test case described in \autoref{subsubsec:ConvergenceCurl}
    with initial condition
    \eqref{eq:exactCurl}, on a series of Cartesian meshes.
    Results show a high benefit in using
    the space $\Bcurl_k$ with the Godunov flux, which is the only one
    to always reach the optimal order.}
\end{table}

\begin{figure}
  \begin{center}
    \begin{tabular}{r@{\qquad}l}
      \includegraphics[width=6cm]{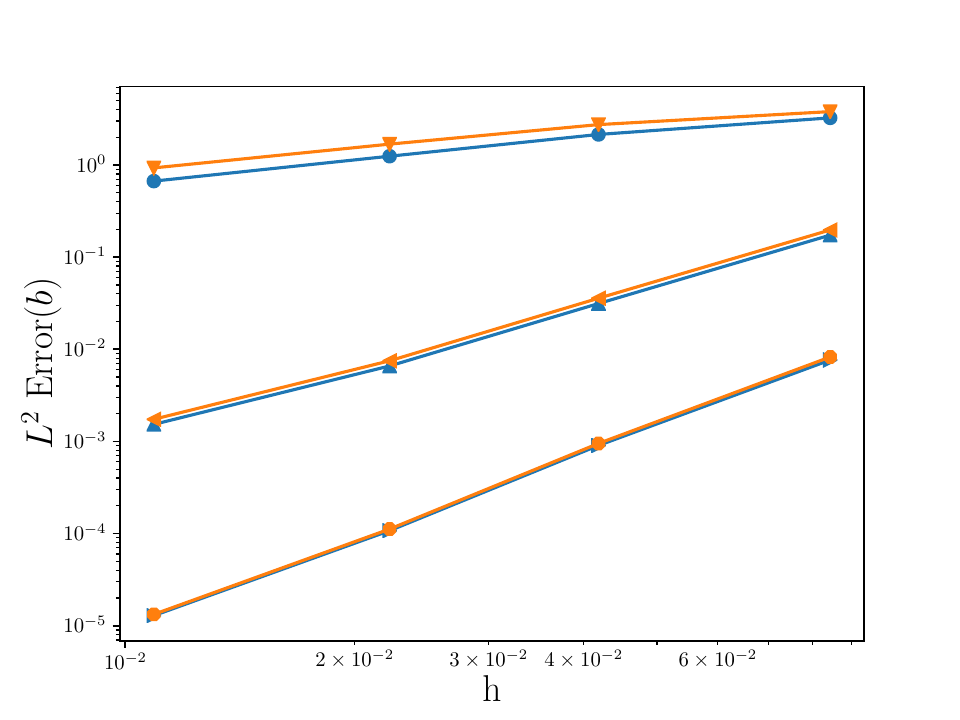}
      &
      \includegraphics[width=6cm]{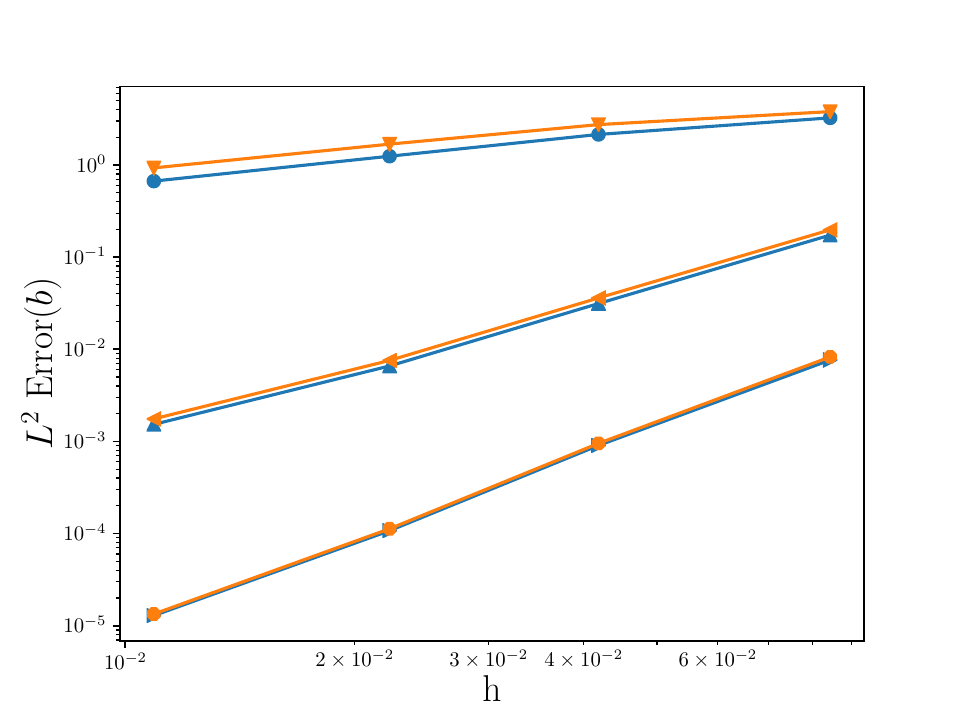} \\
      \includegraphics[width=6cm]{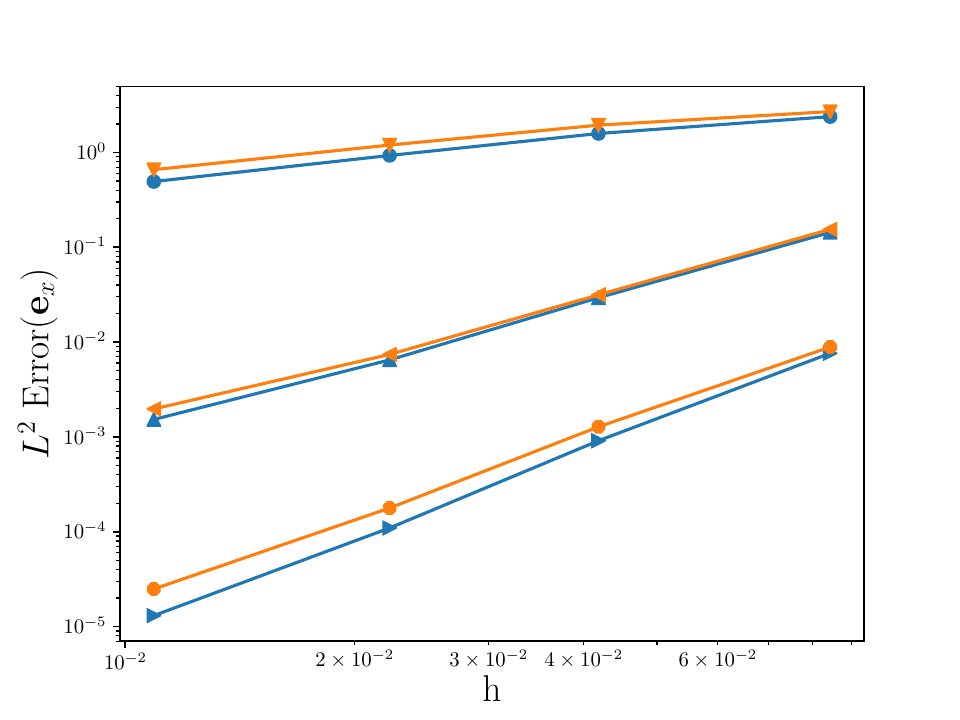}
      &
      \includegraphics[width=6cm]{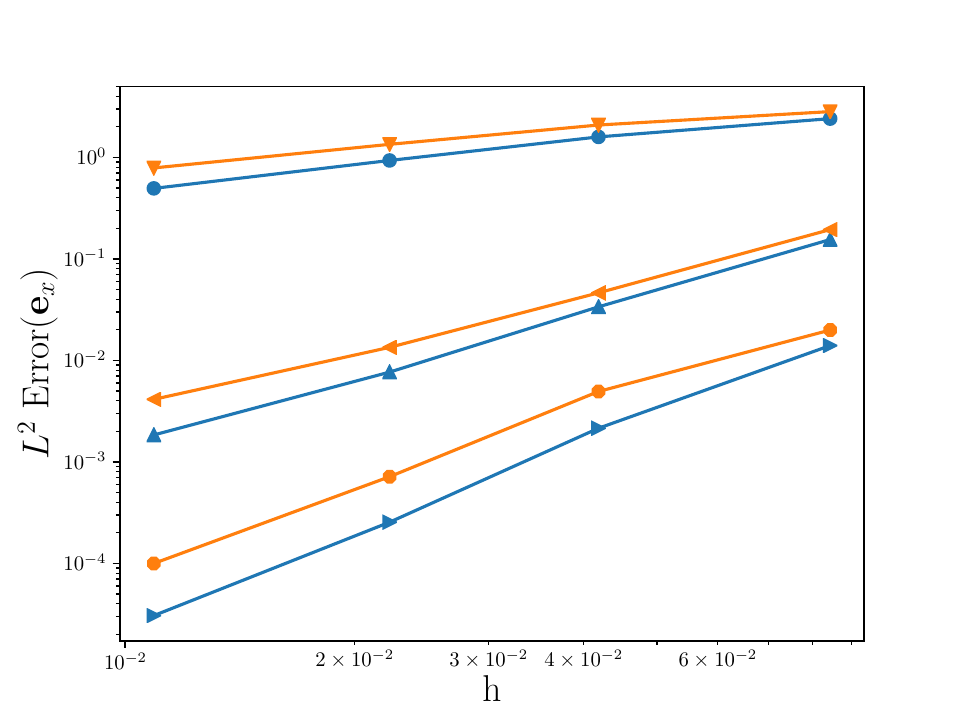} \\
      \includegraphics[width=6cm]{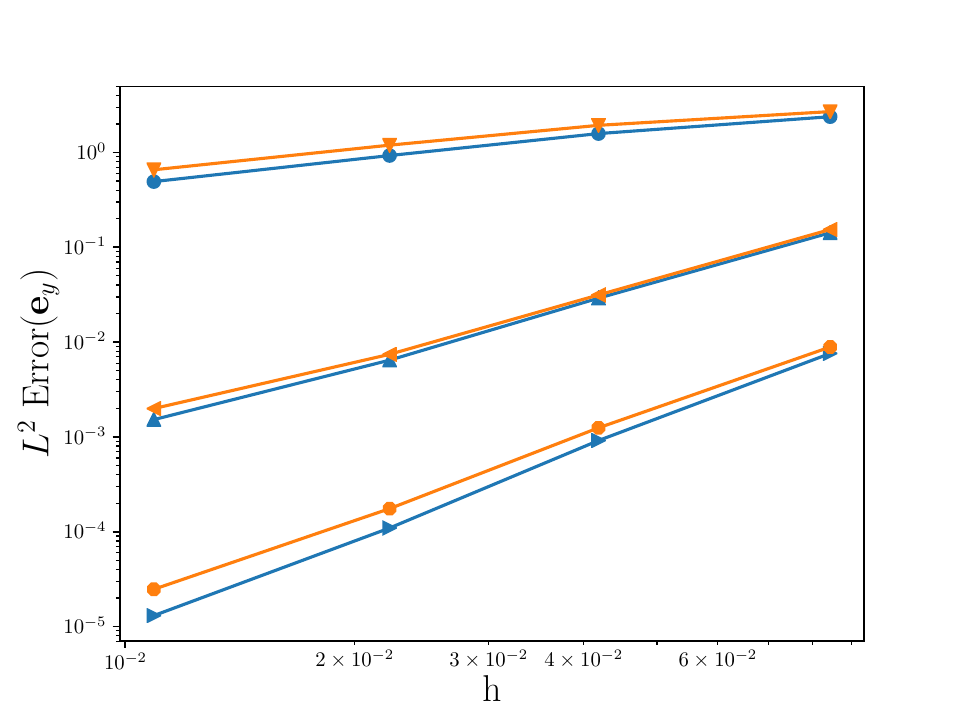}
      &
      \includegraphics[width=6cm]{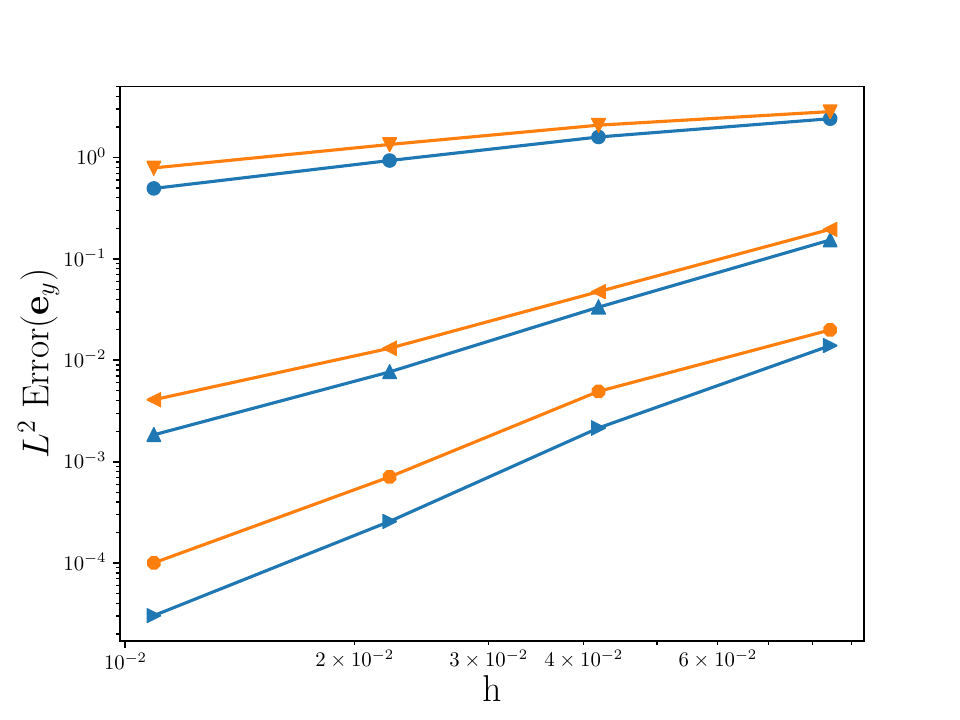} \\
      \multicolumn{2}{c}{
        \includegraphics[width=8cm]{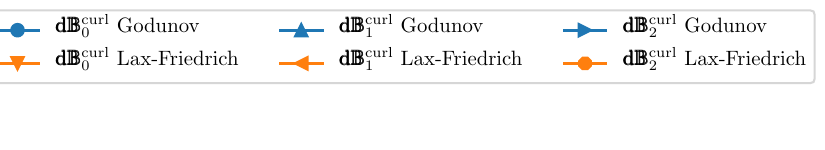}
      }
      \\
    \end{tabular}
  \end{center}
  \caption{\label{fig:CurlConvergenceTriangle} Error obtained
    on the test case described in
    \autoref{subsubsec:ConvergenceCurl} 
    with initial condition \eqref{eq:exactCurl} on the left, and
    with the initial condition \eqref{eq:CurlAddVortex} added to
    \eqref{eq:exactCurl} on the right, on a series of
    unstructured triangular meshes. For each of the test cases,
    the error obtained for the variables $p$ (top row),
    $\bu_x$ (middle row) and $\bu_y$ (bottom row)
    is shown for different degrees, and for
    the Lax-Friedrich and Godunov numerical flux. 
  }
\end{figure}

\begin{table}
  \begin{center}
\begin{tabular}{||c||c|c||c|c||}
\hline 
\multicolumn{1}{||c||}{}
& \multicolumn{2}{|c||}{Godunov}
& \multicolumn{2}{|c||}{Lax-Friedrich}
\\ \hline
$h$ 
 & Error & rate 
 & Error & rate 
\\ \hline
0.08439823 & 2.39e+00 & 
 & 2.70e+00 & 
 \\ 
0.04187874 & 1.58e+00  & 0.59 & 1.94e+00  & 0.47\\ 
0.02226197 & 9.29e-01  & 0.84 & 1.20e+00  & 0.77\\ 
0.01091023 & 4.93e-01  & 0.89 & 6.58e-01  & 0.84\\ \hline 
\multicolumn{1}{||c||}{}
& \multicolumn{2}{|c||}{Godunov}
& \multicolumn{2}{|c||}{Lax-Friedrich}
\\ \hline
$h$ 
 & Error & rate 
 & Error & rate 
\\ \hline
0.08439823 & 1.43e-01 & 
 & 1.55e-01 & 
 \\ 
0.04187874 & 2.94e-02  & 2.26 & 3.15e-02  & 2.27\\ 
0.02226197 & 6.48e-03  & 2.39 & 7.41e-03  & 2.29\\ 
0.01091023 & 1.53e-03  & 2.03 & 1.97e-03  & 1.86\\ \hline 
\multicolumn{1}{||c||}{}
& \multicolumn{2}{|c||}{Godunov}
& \multicolumn{2}{|c||}{Lax-Friedrich}
\\ \hline
$h$ 
 & Error & rate 
 & Error & rate 
\\ \hline
0.08439823 & 7.62e-03 & 
 & 8.87e-03 & 
 \\ 
0.04187874 & 9.12e-04  & 3.03 & 1.28e-03  & 2.76\\ 
0.02226197 & 1.09e-04  & 3.35 & 1.78e-04  & 3.12\\ 
0.01091023 & 1.30e-05  & 2.98 & 2.49e-05  & 2.76\\ \hline 
\end{tabular}

  \end{center}
  \caption{\label{tab:CurlConvergenceTriangleSineCos} Errors and
    convergence rates obtained on the variable $\bu_x$
    with the test case described in \autoref{subsubsec:ConvergenceCurl}
    with initial condition
    \eqref{eq:exactCurl}, on the series of triangular meshes.
    Results show a low benefit in using the Godunov flux, namely in
    exactly preserving the divergence.}
\end{table}

\begin{table}
  \begin{center}
\begin{tabular}{||c||c|c||c|c||}
\hline 
\multicolumn{1}{||c||}{}
& \multicolumn{2}{|c||}{Godunov}
& \multicolumn{2}{|c||}{Lax-Friedrich}
\\ \hline
$h$ 
 & Error & rate 
 & Error & rate 
\\ \hline
0.08439823 & 2.41e+00 & 
 & 2.83e+00 & 
 \\ 
0.04187874 & 1.59e+00  & 0.59 & 2.08e+00  & 0.43\\ 
0.02226197 & 9.33e-01  & 0.85 & 1.34e+00  & 0.69\\ 
0.01091023 & 4.95e-01  & 0.89 & 7.87e-01  & 0.75\\ \hline 
\multicolumn{1}{||c||}{}
& \multicolumn{2}{|c||}{Godunov}
& \multicolumn{2}{|c||}{Lax-Friedrich}
\\ \hline
$h$ 
 & Error & rate 
 & Error & rate 
\\ \hline
0.08439823 & 1.55e-01 & 
 & 1.94e-01 & 
 \\ 
0.04187874 & 3.37e-02  & 2.18 & 4.63e-02  & 2.05\\ 
0.02226197 & 7.66e-03  & 2.34 & 1.34e-02  & 1.96\\ 
0.01091023 & 1.84e-03  & 2.00 & 4.13e-03  & 1.65\\ \hline 
\multicolumn{1}{||c||}{}
& \multicolumn{2}{|c||}{Godunov}
& \multicolumn{2}{|c||}{Lax-Friedrich}
\\ \hline
$h$ 
 & Error & rate 
 & Error & rate 
\\ \hline
0.08439823 & 1.40e-02 & 
 & 1.99e-02 & 
 \\ 
0.04187874 & 2.14e-03  & 2.68 & 4.94e-03  & 1.99\\ 
0.02226197 & 2.54e-04  & 3.37 & 7.14e-04  & 3.06\\ 
0.01091023 & 3.06e-05  & 2.97 & 9.98e-05  & 2.76\\ \hline 
\end{tabular}

  \end{center}
  \caption{\label{tab:CurlConvergenceTriangleSineCosPlusVortex} Errors and
    convergence rates obtained on the variable $\bu_x$
    with the test case described in \autoref{subsubsec:ConvergenceCurl}
    with initial condition
    \eqref{eq:exactCurl}, on the series of triangular meshes.
    Results show a higher benefit than in
    \autoref{tab:CurlConvergenceTriangleSineCos} 
    in using the Godunov flux for preserving exactly the divergence.}
\end{table}

\subsection{Induction equation}

In this section, we are interested in the system \eqref{eq:InductionEquation}.
The test case is taken from \cite[Section 4.3]{veiga2021arbitrary}
\emph{Rotating discontinuous magnetic field loop}, 
still, it was modified in order to ensure that the magnetic loop is regular
because we wish to do a convergence study.
The computational domain is still $[0,1]^2$, and the 
vector field $\bv$ is an orthoradial velocity with respect to the
center of the computational domain $(0.5, 0.5)$:
$\bv = - \be_{\theta}$.

The initial condition is defined by
\begin{equation}
  \label{eq:InductionRegularInit}
  \bu^0 (\bx) = 
  \left\{
  \begin{array}{l}
    \text{if } \rbar < r_0 \quad 
    \left\{
    \begin{array}{r@{\, = \, }l}
      \bu_x &  -2 K_0  \alpha  \ybar  \,
      \dfrac{\ex{ - \alpha/(1-\rbar^2) }}{(1-\rbar)^2} \\
      \bu_y &  \hphantom{-} 2 K_0  \alpha  \xbar \, 
      \dfrac{\ex{ - \alpha/(1-\rbar^2) }}{(1-\rbar)^2} \\
    \end{array}
    \right.
    \\
    \text{if } \rbar \geq r_0 \quad 
    0 ,
  \end{array}
  \right.
\end{equation}
where $r^2 = (x-x_c)^2 + (y-y_c)^2$, $\rbar = r/r_0$,
$\xbar = (x-x_c)/r_0$,
$\ybar = (y-y_c)/r_0$, and $K_0$, $\alpha$, $x_c$, $y_c$ and $r_0$ are numerical
parameters of the test case. The numerical parameters are
$K_0 = 2$, $\alpha = 4$, $x_c = 0.5$, $y_c = 0.75$, and $r_0 = 0.125$.
This solution is such that $\bu^0 = \nablaperp f^0$, where $f^0$
is the regular function
$$f^0(\bx) :=
\left\{
\begin{array}{l@{\qquad }l}
  - K_0 \rbar_0    \ex{ - \alpha/(1-\rbar^2)} & \text{if} \ \rbar < \rbar_0 \\
  0 & \text{otherwise}. \\
\end{array}
\right.
$$
The initial solution is a vortex rotating around the point $(x_c, y_c)$.
The field $\bv$ induces a rotation of the solution around
$(0.5,0.5)$, which means that the exact solution is a vortex rotating
around its center, and the center of the vortex rotates around 
$(0.5,0.5)$ with angular velocity equal to $1$, namely
$$\bu (\bx,t) = R(-t) \bu^0 \left( R(t) \bx \right), $$
where $R(t)$ is the matrix of rotation around $(0.5,0.5)$ of angle
$t$:
$$
R(t) :=
\left(
\begin{array}{cc}
  \cos t & - \sin t \\
  \sin t & \cos t \\
\end{array}
\right).
$$
The computations are led with the approximation space $\Bcurl_k$, and
with the Godunov' flux. 
The initial condition is computed with the method explained in
\autoref{rem:DivergenceInit}.

\subsubsection{Conservation of the divergence free field}

We first check that the divergence of $\bu$ is preserved, equal to $0$.
For this,
the test case is run until time $t=\pi$ on the different meshes
shown in \autoref{fig:Meshes}. The initial
$L^2$ norm of $\nabla^\star \bu^0$ is summarized in \autoref{tab:InitDivergence}.
Results show that the method described in \autoref{rem:DivergenceInit}
ensures the zero divergence for the initial condition. 

\begin{table}
  \begin{center}
    \begin{tabular}{||c||c|c|c||}
      \hline
      degree & 0 & 1 & 2 \\ \hline
      Triangular & $0$ & $2.9602e-14$ & $9.43413e-14$ \\ \hline
      Cartesian  & $0$ & $0$  & $0$ \\ \hline
      Unstructured quad & $0$ & $4.26275e-14$ & $1.69086e-13$ \\ \hline
    \end{tabular}
  \end{center}
  \caption{ \label{tab:InitDivergence} $L^2$ norm of the initial divergence when
    the initial condition is
    computed as in \autoref{rem:DivergenceInit} for the different meshes
  of \autoref{fig:Meshes} and for degree $0$, $1$ and $2$.}
\end{table}
The difference with respect to the initial divergence
is plotted in \autoref{fig:InductionDivergenceConservation}, and show
preservation up to round-off errors of the divergence free field. 

\begin{figure}
  \begin{center}
  \begin{tabular}{c}
    \includegraphics[width=12cm]{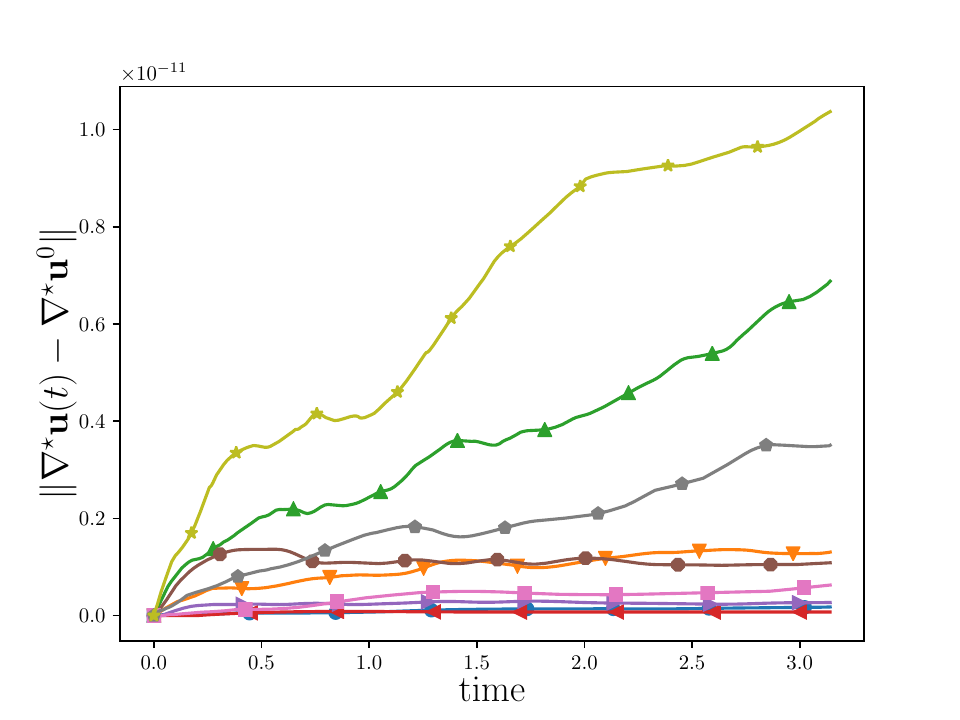} \\
    \includegraphics[width=9cm]{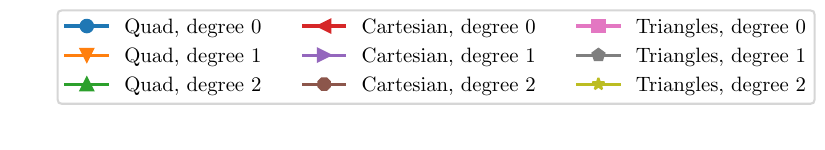} \\
  \end{tabular}
  \end{center}
  \caption{\label{fig:InductionDivergenceConservation} Time evolution of
    $\norme{\nabla^\star \bu - \nabla^\star \bu^0}_2$ with respect to the
    time for a coarse Cartesian, unstructured quadrangular and
    a triangular mesh (all shown in \autoref{fig:Meshes}). Note that the
    scale is $10^{-11}$, so that the figure shows exact (up to round-off errors)
    preservation of the divergence.}
\end{figure}

\subsubsection{Convergence test}
\label{subsubsec:InductionConvergence}

We wish now to perform a convergence test on the series of Cartesian
and unstructured triangular meshes described in the beginning of the section. 
In this test, the computation is led until $t=0.5$.
The convergence curves for the two
variables $\bu_x$ and $\bu_y$ are
plotted in \autoref{fig:ErrorInduction}, whereas the convergence rate
computations are shown in \autoref{tab:ErrorInduction}. Both the table and
the figure represent a convergence close of the optimal order of
convergence. Note however that in agreement with
  \autoref{rem:ConvergenceCartesian}, the order
of accuracy for the variable $\bu_x$, degree 1 on Cartesian mesh is $3/2$.

\begin{figure}
  \begin{center}
    \begin{tabular}{c@{\qquad}c}
      \includegraphics[width=6cm]{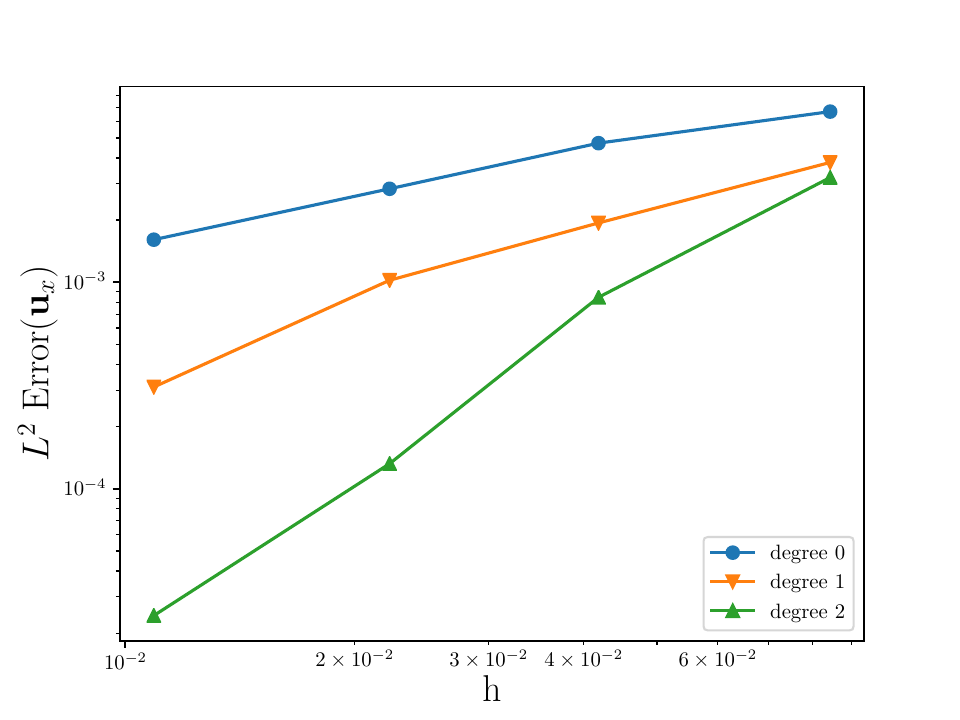}
      &
      \includegraphics[width=6cm]{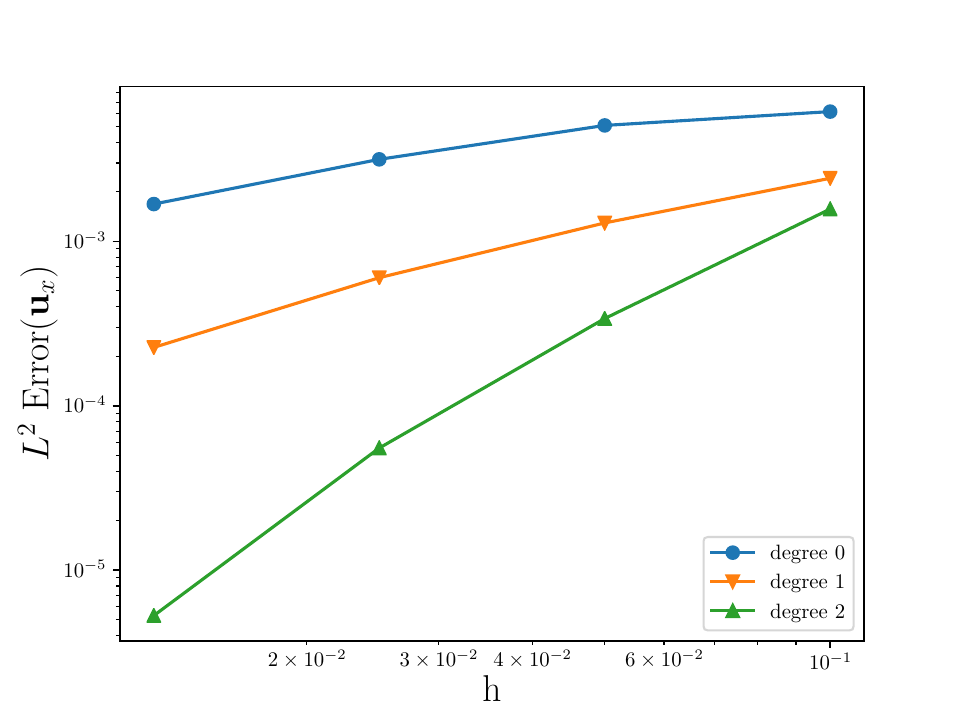} \\
      \includegraphics[width=6cm]{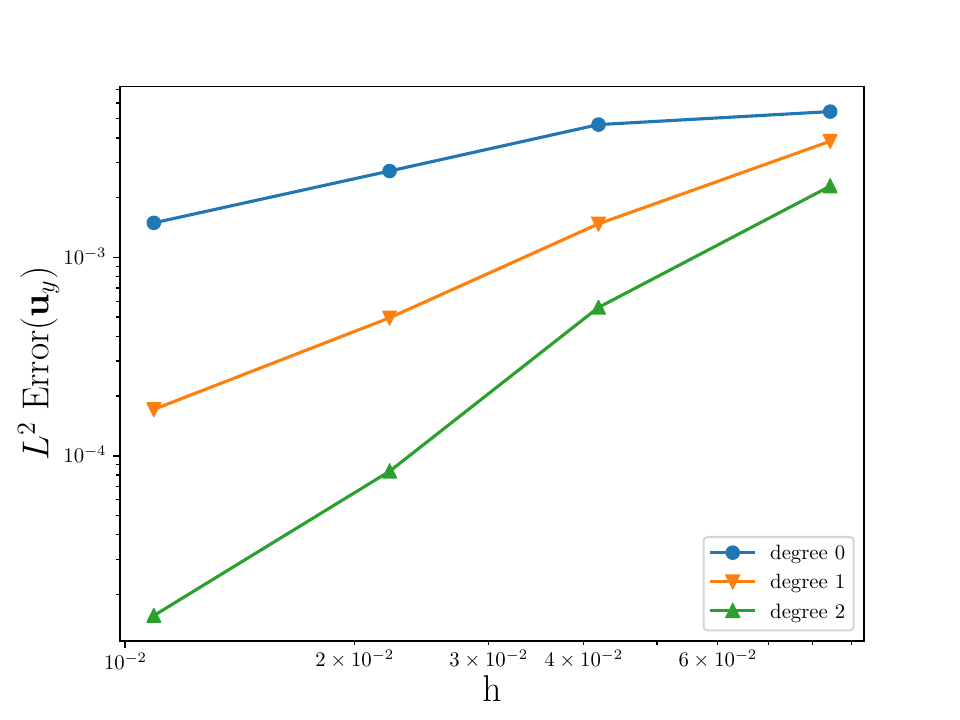}
      &
      \includegraphics[width=6cm]{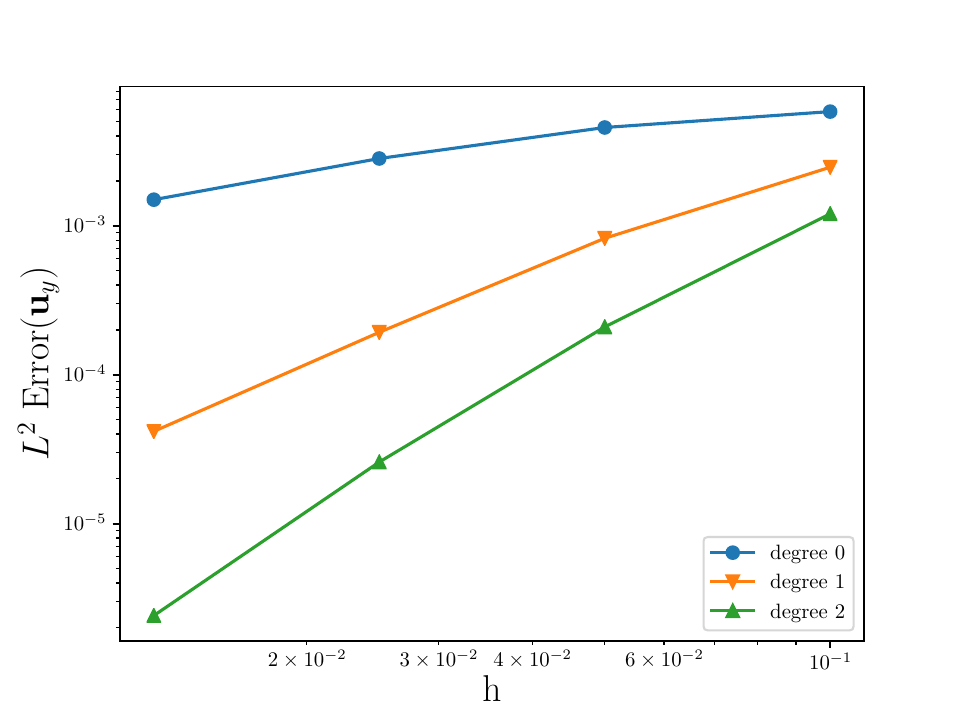} \\
    \end{tabular}
  \end{center}
  \caption{ \label{fig:ErrorInduction}
    Error curves for the test case described in
    \autoref{subsubsec:InductionConvergence}.
    The top figures are the error on $\bu_x$, whereas the bottom
    figures are the error on $\bu_y$. The left column matches with triangular
    meshes whereas the right column is for Cartesian meshes. 
  }
\end{figure}

\begin{table}
\begin{tabular}{c||c||c|c|c|c||c||c|c|c|c||}
\hline 
\multicolumn{1}{c}{degree 0}
& \multicolumn{5}{c}{Cartesian}
& \multicolumn{5}{c}{Triangle}
\\ \hline
\multicolumn{1}{c||}{} 
 & \multicolumn{1}{|c||}{}
& \multicolumn{2}{|c|}{$\bu_x$}
& \multicolumn{2}{|c||}{$\bu_y$}
 & \multicolumn{1}{|c||}{}
& \multicolumn{2}{|c|}{$\bu_x$}
& \multicolumn{2}{|c||}{$\bu_y$}
\\ \cline{2-11}
 & $h$  & Error & rate  & Error & rate  & $h$  & Error & rate  & Error & rate \\ \cline{2-11} 
 & 1.00e-01 & 6.15e-03 &  & 5.84e-03 &  & 8.44e-02 & 6.71e-03 &  & 5.43e-03 &  \\ 
 & 5.00e-02 & 5.07e-03 & 0.28 & 4.57e-03 & 0.35 & 4.19e-02 & 4.72e-03 & 0.50 & 4.67e-03 & 0.22\\ 
 & 2.50e-02 & 3.15e-03 & 0.69 & 2.83e-03 & 0.69 & 2.23e-02 & 2.84e-03 & 0.81 & 2.72e-03 & 0.85\\ 
 & 1.25e-02 & 1.69e-03 & 0.90 & 1.50e-03 & 0.92 & 1.09e-02 & 1.61e-03 & 0.80 & 1.49e-03 & 0.84\\ 
 & 6.25e-03 & 8.72e-04 & 0.95 & 7.79e-04 & 0.94 & 5.60e-03 & 9.44e-04 & 0.80 & 7.50e-04 & 1.03\\ \hline 
\multicolumn{1}{c}{degree 1}
& \multicolumn{5}{c}{Cartesian}
& \multicolumn{5}{c}{Triangle}
\\ \hline
\multicolumn{1}{c||}{} 
 & \multicolumn{1}{|c||}{}
& \multicolumn{2}{|c|}{$\bu_x$}
& \multicolumn{2}{|c||}{$\bu_y$}
 & \multicolumn{1}{|c||}{}
& \multicolumn{2}{|c|}{$\bu_x$}
& \multicolumn{2}{|c||}{$\bu_y$}
\\ \cline{2-11}
 & $h$  & Error & rate  & Error & rate  & $h$  & Error & rate  & Error & rate \\ \cline{2-11} 
 & 1.00e-01 & 2.42e-03 &  & 2.47e-03 &  & 8.44e-02 & 3.80e-03 &  & 3.85e-03 &  \\ 
 & 5.00e-02 & 1.29e-03 & 0.90 & 8.24e-04 & 1.58 & 4.19e-02 & 1.93e-03 & 0.97 & 1.48e-03 & 1.36\\ 
 & 2.50e-02 & 6.01e-04 & 1.11 & 1.93e-04 & 2.10 & 2.23e-02 & 1.02e-03 & 1.01 & 4.95e-04 & 1.73\\ 
 & 1.25e-02 & 2.26e-04 & 1.41 & 4.17e-05 & 2.21 & 1.09e-02 & 3.11e-04 & 1.66 & 1.71e-04 & 1.49\\ 
 & 6.25e-03 & 7.89e-05 & 1.52 & 9.64e-06 & 2.11 & 5.60e-03 & 1.17e-04 & 1.46 & 6.24e-05 & 1.51\\ \hline 
\multicolumn{1}{c}{degree 2}
& \multicolumn{5}{c}{Cartesian}
& \multicolumn{5}{c}{Triangle}
\\ \hline
\multicolumn{1}{c||}{} 
 & \multicolumn{1}{|c||}{}
& \multicolumn{2}{|c|}{$\bu_x$}
& \multicolumn{2}{|c||}{$\bu_y$}
 & \multicolumn{1}{|c||}{}
& \multicolumn{2}{|c|}{$\bu_x$}
& \multicolumn{2}{|c||}{$\bu_y$}
\\ \cline{2-11}
 & $h$  & Error & rate  & Error & rate  & $h$  & Error & rate  & Error & rate \\ \cline{2-11} 
 & 1.00e-01 & 1.57e-03 &  & 1.20e-03 &  & 8.44e-02 & 3.15e-03 &  & 2.28e-03 &  \\ 
 & 5.00e-02 & 3.39e-04 & 2.21 & 2.09e-04 & 2.52 & 4.19e-02 & 8.42e-04 & 1.88 & 5.59e-04 & 2.00\\ 
 & 2.50e-02 & 5.53e-05 & 2.62 & 2.59e-05 & 3.01 & 2.23e-02 & 1.32e-04 & 2.93 & 8.35e-05 & 3.01\\ 
 & 1.25e-02 & 5.29e-06 & 3.39 & 2.41e-06 & 3.42 & 1.09e-02 & 2.42e-05 & 2.38 & 1.55e-05 & 2.36\\ 
 & 6.25e-03 & 4.91e-07 & 3.43 & 2.69e-07 & 3.17 & 5.60e-03 & 3.49e-06 & 2.90 & 2.06e-06 & 3.02\\ \hline 
\end{tabular}

  \caption{\label{tab:ErrorInduction} Errors obtained for the test case of the
    Rotating regular magnetic loop on triangular and quadrangular
    meshes. The error obtained is close of the optimal rate of convergence. }
\end{table}

\subsubsection{Stability test}

In \autoref{prop:EquationOnDivergence}, the divergence free preservation is
obtained for the numerical flux \eqref{eq:LaxFriedrichTangential}, which has 
purely tangential diffusion. Numerical flux \eqref{eq:LaxFriedrichTangential}
is less diffusive than the Lax-Friedrich numerical flux, which has
diffusion in all the directions. The aim of this test is to assess that the
tangential diffusion is sufficient for ensuring $L^2$ stability.

For this, we do the same test as in \cite[Section 4.2]{veiga2021arbitrary},
\emph{Discontinuous magnetic field loop}.
The initial condition is
$$
\bu^0 (\bx) = 
\left\{
\begin{array}{l}
  \text{if } \rbar < r_0 \quad 
  \left\{
  \begin{array}{r@{\, = \, }l}
    \bu_x &  - K_0 \ybar\\
    \bu_y &  \hphantom{-} K_0 \xbar\\
  \end{array}
  \right.
  \\
  \text{if } \rbar \geq r_0 \quad 
  0 ,
\end{array}
\right.
$$
with the same notations as in \eqref{eq:InductionRegularInit}.
The numerical parameters are $x_c = y_c = 0.5$, $K_0 = 0.01$. 
Taking a discontinuous initial condition is well suited with the stability
study, because a discontinuous solution contains a lot of Fourier modes, and
these Fourier modes are decreasing slowly.
Following \cite{veiga2021arbitrary}, the velocity vector field is
$\bv = (1,1)$ for allowing a long time integration until
$t=2$. The boundary conditions are periodic.
The CFL number is equal to $0.5$ for
$k=0$, $0.33$ for $k=1$ and $0.2$ for $k=2$, as for the classical
discontinuous Galerkin method. 
The stability is assessed by computing the
norm of the magnetic energy $\bu \cdot \bu / 2$ at each time step,
and by checking that it is decreasing. This magnetic energy is adimensioned by
its value at time $0$. 
Results obtained on the triangular
mesh of \autoref{fig:Meshes} are shown in \autoref{fig:InductionStability} for
different numerical flux and different initialization:
\begin{itemize}
\item If the divergence free initialization is done as in
  \autoref{rem:DivergenceInit}, and the flux \eqref{eq:LaxFriedrichTangential}
  is used, the scheme
  is stable. This can be explained by the fact that the divergence free
  component of $\bu$ is preserved, while the flux
  \eqref{eq:LaxFriedrichTangential} is sufficient for
  stabilizing the evolution of the curl free component of $\bu$.
  At the fourth revision step of this article, one reviewer claimed
    that the numerical scheme was unstable in long time, and so, on the
    bottom right of this Figure, results for a final time of $20$ are also
  depicted, and show stability too.
\item If the initial condition is computed by projection, without the
  process described in \autoref{rem:DivergenceInit}, and the flux
  \eqref{eq:LaxFriedrichTangential} is
  used, then the scheme is not stable in general, in agreement with
  \autoref{prop:EquationOnDivergence}: both the divergence free and curl free
  component of $\bu$ evolve, and the flux \eqref{eq:LaxFriedrichTangential}
  is not sufficient for
  stabilizing the evolution of the two components.
\item If the divergence free initialization is done as in
  \autoref{rem:DivergenceInit}, and the Lax-Friedrich flux is used, the
  scheme is stable, still, the divergence of $\bu$ is not kept to $0$.
\end{itemize}
\begin{figure}
  \begin{center}
    \begin{tabular}{c@{\quad}c}
      \includegraphics[width=0.45\textwidth]{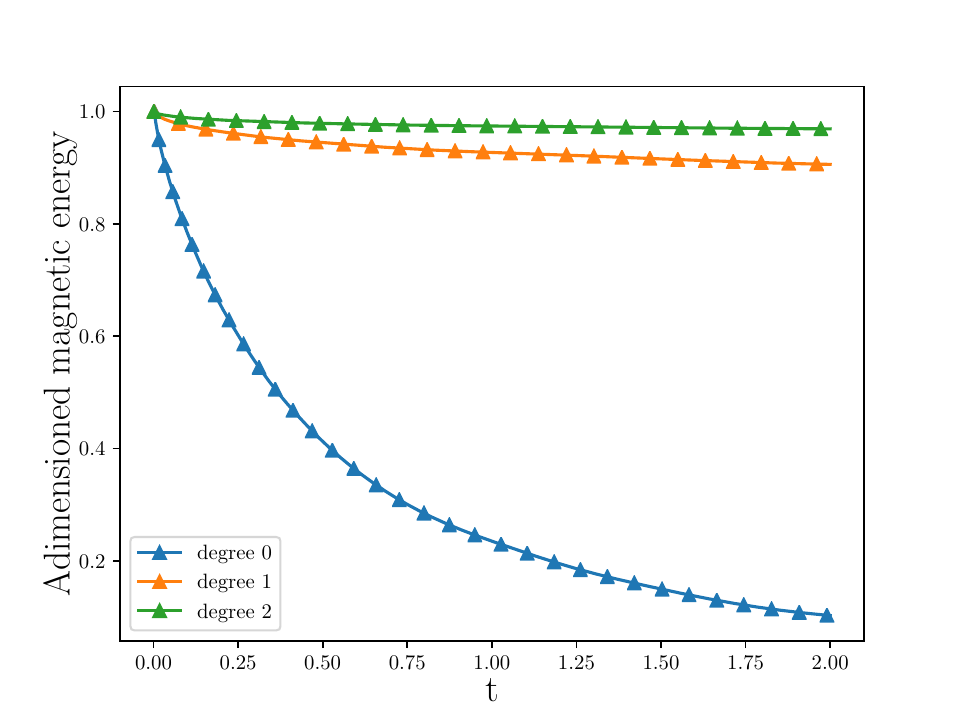}
      &
      \includegraphics[width=0.45\textwidth]{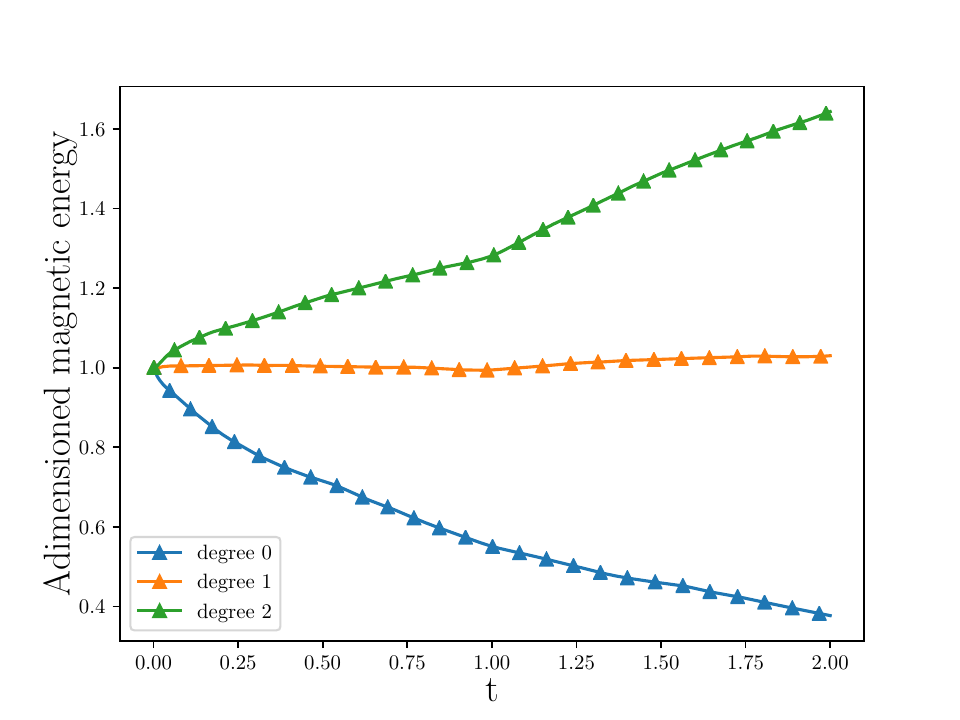} \\
      Divergence free initialization,
      &
      No divergence free initialization, \\
       Lax-Friedrich flux. &  flux \eqref{eq:LaxFriedrichTangential}. \\
       \includegraphics[width=0.45\textwidth]{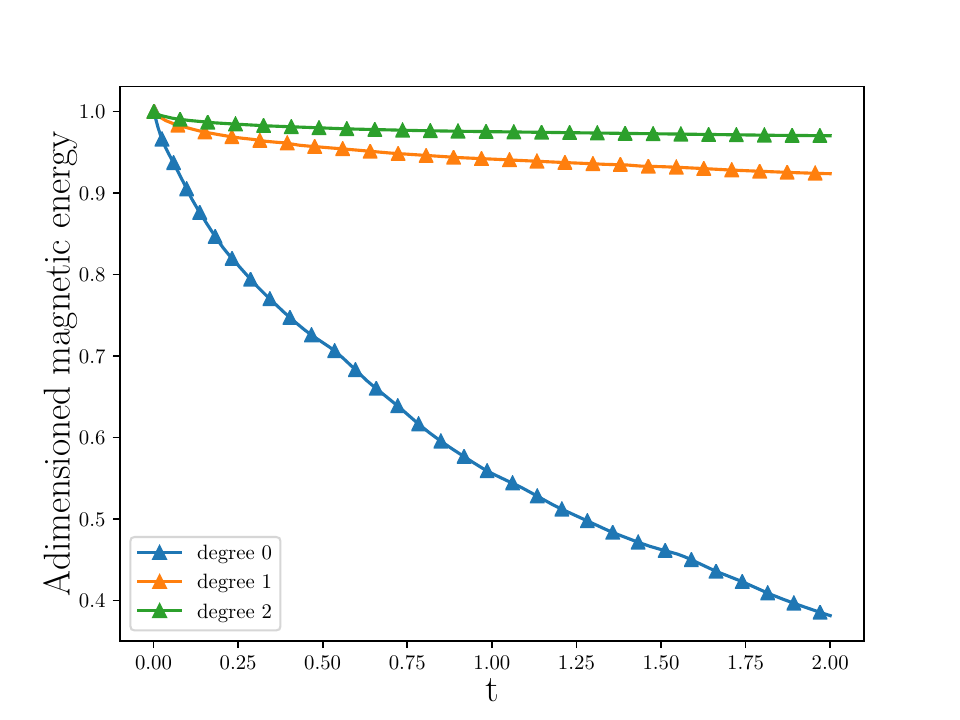}
       &
       \includegraphics[width=0.45\textwidth]{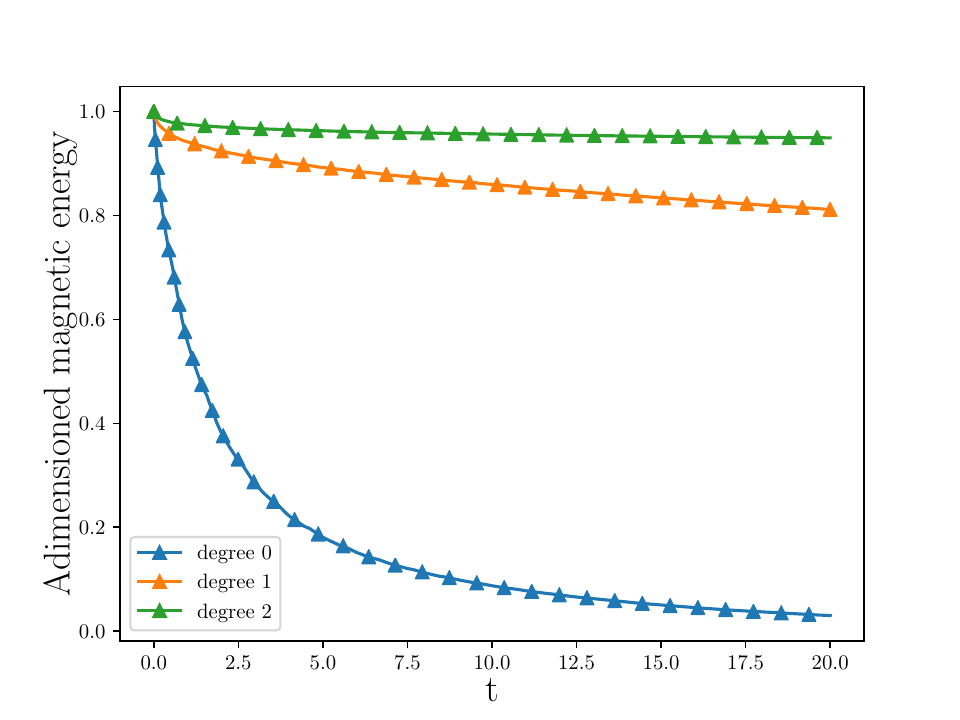}
       \\
      Divergence free initialization,  & Divergence free initialization, \\
      flux \eqref{eq:LaxFriedrichTangential}, $t_{final}=2$.
      &
      flux \eqref{eq:LaxFriedrichTangential}, $t_{final}=20$.
      \\
    \end{tabular}
    \caption{\label{fig:InductionStability}
      Adimensioned magnetic energy
      obtained for the test case \emph{Discontinuous magnetic field loop}.
      On the top left picture, results obtained with a divergence free
      initialization and the Lax-Friedrich flux show a stable behaviour.
      On the top right picture, results
      obtained with a non divergence free initialization and the
      flux \eqref{eq:LaxFriedrichTangential} show an unstable
      behaviour except for the degree 0 scheme. On the bottom picture,
      results obtained with a divergence free initialization and the
      flux \eqref{eq:LaxFriedrichTangential} show a stable behaviour,
      for both final time $2$ on the left and final time
      $20$ on the right.
    }
  \end{center}
\end{figure}
  In all our numerical test, no instability was observed, still, 
  the thorough analysis of the numerical scheme
  remains to be theoretically performed. 
  We foresee also an instability
  that may come from the accumulation of round-off errors: 
  \begin{remark}[Impact of accumulation of round-off errors]
    As seen in \autoref{fig:InductionDivergenceConservation}, the norm of
    the divergence is low, but seems to be increasing.
    We infer that it may come from the numerical scheme
    \eqref{eq:AdvectionCG} ensured by the divergence.
    The numerical scheme
    \eqref{eq:AdvectionCG} preserves an initially zero solution,
    but it does not include any diffusion, and so no mechanism for damping the
    round-off errors around $0$. For very long time integration we foresee that the
    accumulation of round-off errors may lead to a non divergence free
    vector, which may itself lead to instability of the numerical
    scheme, similar to the one observed on top right of the
    \autoref{fig:InductionStability}. This potential instability could be
    cured by performing a divergence cleaning at some time steps,
    similar to redistanciation in level-set methods, or to
    re-orthogonalization in Krylov methods. 
  \end{remark}

\section{Conclusion}
\label{sec:conclusion}

In this article, the discrete preservation of differential constraints
was investigated for the discontinuous Galerkin methods. The method
developed within this article relies on
a previously proposed framework for deriving appropriate approximation
spaces for vectors, which fits in a discrete de-Rham context
\cite{PerrierExact2024}. 
For a continuous equation on a vector that preserves either the
curl or the divergence, we were able to prove that the classical
discontinuous Galerkin method preserves also this constraint at the discrete
level under the following assumptions:
\begin{itemize}
\item Use the correct approximation space for vector unknowns
  (for straight triangular meshes, the approximation space is the
    classical one)
\item Use a numerical flux which diffusion is either parallel (for preserving
  a curl) or orthogonal (for preserving a divergence) to the normal of the
  faces. 
\end{itemize}
Notably, the global curl or the global divergence that are preserved are defined in an adjoint
sense. This use of the adjoint
de-Rham complex is in fact very well suited with the fact that our
numerical method is of Galerkin' type. Indeed, taking a strong
derivative (divergence or curl) of the solution of a numerical scheme
seems to be
a tedious task in general. On the contrary, taking the adjoint of the
exterior derivative is a very simple operation for a Galerkin based
numerical scheme: it just consists in testing the weak formulation with
a gradient or a curl of a function. For the induction equation, it
was even possible to derive a discrete transport equation in the space
$\bbA_{k+1}$ for the adjoint divergence, starting
from the discontinuous Galerkin method in $\Bcurl_k$ for the
unknown vector.
Last, the clear discrete de-Rham framework that was developed in this article
allows to initialize easily a divergence free or a curl free field from the
knowledge of its potential. 

Numerical tests were performed on three linear systems: the two dimensional
Maxwell system for the conservation of the divergence, the two dimensional
wave system for the conservation of the curl, and the two dimensional
induction equation. Numerical
tests confirmed the theoretical results, and show also that the
theoretical results seem to be sharp, in the sense that if one of the 
hypothesis
of \autoref{prop:divConservation} and \autoref{prop:curlConservation}
is not ensured (wrong approximation space or
diffusion not in the right direction), the conservation of the curl or
the divergence is jeopardized.
Last, the implementation was done  with a special family of finite elements
on quads (and also on triangles), but theoretical results hold for
other finite elements proposed in \cite{PerrierExact2024}.

Note that the numerical tests were performed only on linear systems, but
\autoref{prop:curlConservation} and \autoref{prop:divConservation}
hold also for nonlinear problems. Still, these propositions ensure
only geometrical properties on the solution, but do not
say anything on the stability of the schemes. As our scheme is fully
collocated,
we hope to avoid the difficulty of multidimensional Riemann
problems \cite{fu2018globally,chandrashekar2020constraint,balsara2021curl}
raised by the staggering of the magnetic field.
Still, the extension to nonlinear systems  will lead to
the adaptation of the necessary slope
or flux limiting to our framework, which is an opened question for the
moment. 
Concerning divergence free problems, efficiency of the method could
be improved by considering the same numerical method, but with a trimmed
basis consisting of the cell divergence free basis as in
\cite{cockburn2004locally}. Note however that for conforming with the
rest of the method, 
the basis should then be divergence free \emph{in the adjoint sense}.
Another aspect of the nonlinear problems is the bound preserving
property for the MHD system, which was recently resolved
in \cite{wu2023geometric}, and which is strongly linked with the
divergence free condition \cite{wu2018positivity}. It would be interesting
to check whether our method, which is divergence free preserving, but in
the adjoint sense, fits in the theory developed in
\cite{wu2018positivity,wu2023geometric} for the bound preservation.
Concerning curl free methods, applications to the hyperbolic version
of the Hamilton-Jacobi method \cite{lepsky2000analysis}
could be considered in the future, and as this is a problem with a curl
free field, a trimmed adjoint curl free basis function
could be considered as in \cite{li2005reinterpretation} (but in our case,
still in the adjoint sense).

Last, it is also important to see that
\autoref{prop:curlConservation} and \autoref{prop:divConservation} hold also
in dimension 3, provided the right approximation spaces were derived.
We are currently investigating the extension
of the results of this article to dimension 3 and to
nonlinear systems of conservation law.

\end{document}